\newtheorem{Theorem}{Theorem}[section]
\newtheorem{Definition}[Theorem]{Definition} 
\newtheorem{Proposition}[Theorem]{Proposition}
\newtheorem{Corollary}[Theorem]{Corollary}
\newtheorem{Lemma}[Theorem]{Lemma} 
\newtheorem{Remark}[Theorem]{Remark}
\newtheorem{Example}[Theorem]{Example}
\numberwithin{equation}{section}
\newcommand{\R}{{\mathbb R}}
\newcommand{\N}{{\mathbb N}}
\begin{document}

\title[Regularity of the minimum time function]{Variational analysis and regularity of  the minimum time function for differential inclusions}

\author[Luong V. Nguyen]{Luong V. Nguyen}
\address[Luong V. Nguyen]{Institute of Mathematics, Polish Academy of Sciences, ul. \'Sniadeckich 8, 00 - 656  Warsaw, Poland \\Telephone: +48 22 5228 235 }

\email{vnguyen@impan.pl; luongdu@gmail.com}

\keywords{Reachable sets, normal vectors, minimum time function, maximized Hamiltonian, differential inclusions, $\varphi$ -convexity.}

\subjclass[2000]{49N60, 49N05, 49J52}

\date{\today}
\begin{abstract}
We study the time optimal control problem for differential inclusions with a general closed target. We first give a representation of the proximal horizontal subgradients of the minimum time function $\mathcal{T}$ and then, together with a known representation of the proximal subgradients, we obtain some relationships between the normal cones to the sublevel set and to the epigraph of  $\mathcal{T}$. These relationships allow us to get the propagation of the proximal subdifferential as well as of the proximal horizontal subdifferential of $\mathcal{T}$ along optimal trajectories. Finally, we show, under suitable assumptions, that the epigraph of $\mathcal{T}$ is $\varphi$-convex near the target. This is the first nonlinear $\varphi$-convexity result valid in any dimension.
\end{abstract}
\maketitle

\section{Intoduction}
Let $F:\R^n \rightrightarrows \R^n$ be a Lipschitz continuous sublinear multifunction and $\mathcal{K}$ be a closed subset of $\R^n$.  We consider the minimum time function associated to the target $\mathcal{K}$ for the differential inclusion
 \begin{equation}\label{IDI}
 \left\{
  \begin{array}{lcl}
    \dot{x}(t) & \in &F(x(t)),\quad\quad \mathrm{a.e.}\, t>0 \\
    x(0)& = & x_0 \in \R^n
  \end{array}
  \right.
 \end{equation}
A trajectory of $F$ starting from $x_0$ is an absolutely continuous function $x(\cdot)$ defined on $[0,+\infty)$ satisfying (\ref{IDI}), i.e., $\dot{x}(t) \in F(x(t))$ for a.e. $t>0$ and $x(0) = x_0$. Here, the notion $\dot{x}(t)$ refers to the derivative of $x(\cdot)$ at the time $t$ and it is the right derivative if $t = 0$.

The \textit{minimum time function} for the differential inclusion (\ref{IDI}) associated to the target $\mathcal{K}$ is defined as follows:  for $x_0\in \R^n$,
$$\mathcal{T}(x_0) := \inf\{t>0: \exists\,x(\cdot)\,\text{satisfying}\,(\ref{IDI})\,\text{with}\,x(0) = x_0\,\,\text{and}\,\,x(t)\in \mathcal{K}\},$$
with the convention $\inf \emptyset = +\infty$. When $\mathcal{T}(x_0)$ is finite, it is  the minimal time taken by the trajectories of $F$ starting from $x_0$ to reach the target $\mathcal{K}$. The set $\mathcal{R}$ of points $x\in \R^n$ such that $\mathcal{T}(x) <+\infty$ is called the \textit{controllable set}.

The regularity of the minimum time function $\mathcal{T}$ is a classical and widely studied topic in control theory. It is related to the controllability properties of the control systems as well as to the regularity of the target and the dynamics, together with suitable relations between them. It is well-known that $\mathcal{T}$ is locally Lipschitz in $\mathcal{R}$ if and only if \textit{Petrov's controllability condition} is satisfied (see, e.g., \cite{CS0,CS,VO,WYu}). However, in general, $\mathcal{T}$ is not everywhere differentiable even for smooth data. The strongest regularity property for $\mathcal{T}$ that we can expect, in fairly general cases,  is \textit{semiconcavity}. Here, a function is said to be semiconcave if it can be written as a sum of a $C^2$ function and a concave function. Hence, semiconcave functions inherit many fine properties from concave functions. In this case, $\mathcal{T}$ is locally Lipschitz and a.e. twice differentiable. Cannarsa and Sinestrari showed in \cite{CS0} that the minimum time function is locally semiconcave in $\mathcal{R}\setminus \mathcal{K}$ if Petrov's condition holds and the target satisfies a \textit{uniform interior sphere condition}, i.e., there exists $r>0$ such that for any $x\in \mathcal{K}$, there exists $y\in \mathcal{K}$ such that $x\in \bar{B}(y,r) \subset \mathcal{K}$. Due to the equivalence between Petrov's condition and the Lipschitz continuity,  $\mathcal{T}$ is no longer semiconcave if we remove Petrov's condition. Therefore, it is natural to study the structure of the minimum time function under controllability assumptions which are weaker than Petrov's condition. Keeping uniform interior sphere conditon of $\mathcal{K}$ and  assuming the continuity of $\mathcal{T}$ and the \textit{pointedness} of the normal cones to the hypograph, Colombo and Nguyen showed in \cite{CK} that the \textit{hypograph} of $\mathcal{T}$ is $\varphi$-convex for a suitable continuous function $\varphi$.  This kind of regularity is weaker than semiconcavity. However, $\mathcal{T}$ keeps many regularity properies of semiconcave functions. The proof of the $\varphi$-convexity for the hypograph of $\mathcal{T}$  in \cite{CK} is based on representations of the proximal superdifferential and proximal horizontal superdifferential of $\mathcal{T}$. Removing the pointedness assumption, Nguyen showed in \cite{K} that $\mathcal{T}$  still enjoys good regularity although the hypograph of $\mathcal{T}$ satisfies a weaker regularity called \textit{exterior sphere condition}.

It is worth remarking that all regularity results obtained in \cite{CS0,CK,K} are dealt with the case where $F$ is given in the form of a $C^{1,+}$ parameterization
$$F(x) = \{f(x,u): u\in U\},\qquad x\in \R^n,$$
with $U\subset \R^m$ compact and $f:\R^n\times U \to \R^n$ is of class $C^{1,+}$. However, it is difficult to know when multifunctions admit smooth parameterizations (see \cite{PCPW} for a discussion). To get rid of finding smooth parameterizations for $F$, in \cite{PCPW}, Cannarsa and Wolenski developed a new approach, based on the nonsmooth maximum principle, to obtain semiconcavity results of the value function of a \textit{Mayer problem} for the differential inclusion (\ref{IDI}). One essential assumption for this approach is the semiconvexity in the first variable of the \textit{maximized Hamiltonian} $H$ associated to $F$:
$$H(x,p): = \sup_{v\in F(x)}\langle v,p\rangle,\qquad\qquad (x,p) \in \R^n \times \R^n.$$
Adapting this approach to the optimal time problem, Cannarsa, Marino and Wolenski obtained semiconcavity results of $\mathcal{T}$ for (\ref{IDI}) keeping the interior sphere condition of $\mathcal{K}$ and Petrov's condition (see\cite{CMW13}). Later, some results for smooth parameterized control system were extended to nonparameterized  systems (see, e.g., \cite{camapw,CMW13,CaKh,PCAMKN,CASC}). In particular, in \cite{CaKh}, Cannarsa and Nguyen extended the analysis of \cite{CK,K} to the general system (\ref{IDI}). More precisely, assuming the continuity of $\mathcal{T}$, they showed that the hypograph of $\mathcal{T}$ satisfies an exterior sphere condition provided either $\mathcal{K}$ or $F(x)$ satisfies an interior sphere condition for all $x\in \R^n$.

In contrast to the semiconcavity type, there are few papers dealing with the semiconvexity type of the minimum time function. It was shown in \cite{CS0} that the minimum time function for linear systems is semicovex if the target is convex and Petrov's condition holds. Again for linear systems and convex targets,  removing Petrov's condition but assuming the continuity of $\mathcal{T}$, Colombo, Marigonda and Wolenski showed in \cite{CMW} that the \textit{epigraph} of $\mathcal{T}$ is $\varphi$-convex. Then $\mathcal{T}$ satisfies many good properties as listed in Proposition \ref{ProPRF}. Furthermore, in \cite{CK13}, the authors proved, for two dimensional nonlinear affine control systems and $\mathcal{K}=\{0\}$, that the epigraph of $\mathcal{T}$  is $\varphi$-convex in a small neighborhood of the origin. The proof relies heavily on the (strictly) convexity of sublevel sets of $\mathcal{T}$ (in small time) and on the fact that every point sufficiently close to the origin is \textit{optimal}, i.e., any trajectory steering  a point to the origin optimally can be extended backward still remaining optimal.   To the best of my knowledge, there is no such type of regularity results in a more general setting where $n>2$ for a nonlinear system or where the target is not a single point. In this paper, we will show,  under suitable assumptions, that the epigraph of $\mathcal{T}$, for nonparameterized control system (\ref{IDI}), is $\varphi$-convex near the target (see Theorem 5.7).  More precisely, we prove that if sublevel sets of $\mathcal{T}$ are uniformly $\varphi_0$-convex for some constant $\varphi_0\ge 0$, then there exists a suitable continuous function $\varphi$ such that the epigraph of $\mathcal{T}$ is $\varphi$-convex. Note that, in the proof, we do not need the optimality of points near the target. Futhermore, the proof is also based on some sensitivity relations. 

Sensitivity relations are an interesting and important object in control theory because of applications to optimality conditions, optimal synthesis and regularity of the value function. These relations consist of the \textit{dual arc} satisfying an inclusion of an appropriate generalized gradient of the value function. For the minimal time problem, the first results were presented in \cite{CFS} which dealt with the smooth parameterized systems and the target having an interior sphere condition. In fact, for an optimal trajectory $x(\cdot)$ starting at a point $x_0 \in \mathcal{R}$, they proved that there exists (by maximum principle) a dual arc $p(\cdot)$ such that $p(t)$ belongs to the Fr\'echet superdifferential of $\mathcal{T}$ at $x(t)$ for all $t\in [0,\mathcal{T}(x_0))$ if Petrov's condition holds true at the end point $x(\mathcal{T}(x_0))$. This result was extended to nonparameterized systems in \cite{CMW13} by a different approach. It was proved in \cite{PCAMKN} for nonparameterized systems  that if Petrov's condition holds at the end point $x(\mathcal{T}(x_0))$ then, for all $t\in [0,\mathcal{T}(x_0))$, $p(t)$ belongs to the proximal superdifferential of $\mathcal{T}$ at $x(t)$, otherwise $p(t)$ belongs to the proximal horizontal superdifferential of $\mathcal{T}$ at $x(t)$ for all $t\in [0,\mathcal{T}(x_0))$. Recently, in \cite{CASC}, Cannarsa and Scarinci recovered the results of \cite{PCAMKN} for a general target. They also proved analogous inclusion for the proximal subdifferential  extending the result, for smooth parameterized systems, obtained in \cite{HL14}. More precisely, they showed that the proximal subdifferential of $\mathcal{T}$ propagates along optimal trajectories except at the terminal points. In the present paper, we obtain similar propagation results for both  proximal subdifferential and  proximal  horizontal subdifferential of $\mathcal{T}$ (Corollary \ref{Co2} and \ref{Co3}): we show that  proximal subdifferential and  proximal  horizontal subdifferential of $\mathcal{T}$ propagate wholly along  optimal trajectories. These are consequences of Theorem \ref{THM6} and \ref{THM7} where we prove inclusions for normal cones to the epigraph  and to the sublevel sets of the minimum time function. The proofs of these results are based on a relationship between normals to the epigraph and to sublevel sets of $\mathcal{T}$ via the value at relevant points of the \textit{minimized Hamiltonian} $h$ associated to $F$:
$$h(x,p) := \inf_{v\in F(x)}\langle v,p\rangle, \qquad\qquad (x,p) \in \R^n \times \R^n.$$

 It is proved in \cite{CKL}, for nonlinear control systems, that if  $x\in \mathcal{R}$ and if $\zeta $ belongs to the normal cone of the sublevel set $\mathcal{R}(\mathcal{T}(x)) := \{y\in \R^n: \mathcal{T}(y)\le \mathcal{T}(x)\}$ at $x$ then $(\zeta,h(x,\zeta))$ is a normal to the epigraph of $\mathcal{T}$ at $(x,\mathcal{T}(x))$. The proof is based on Maximum Principle.  Note that, in that paper, besides standard assumptions, it is assumed, in a neighborhood of $x$, that $\mathcal{T}$ is continuous, optimal controls are unique and \textit{bang - bang} with finitely many \textit{switchings}, the sublevel sets are $\varphi$-convex and every point is an \textit{optimal point}.  Under the same assumptions, the reversed implication is also proved in \cite{LTS}, namely if $(\zeta,\alpha)$ is a normal to the epigraph of $\mathcal{T}$ at $(x,\mathcal{T}(x))$ then $\zeta$ is a normal to $\mathcal{R}(\mathcal{T}(x))$ and $h(x,\zeta)= \alpha$. In the present paper, we prove the same conclusions for very gerenal differential inclusions without using maximum principle. The proof is based on the representations of proximal horizontal subdifferential (Theorem \ref{THM1}) and proximal subdifferential of $\mathcal{T}$ (Theorem 5.1 in \cite{WYu}). Moreover,  in Section \ref{SectV} we prove a special feature of the minimum time function, that is, the normal cones to the epigraph of $\mathcal{T}$ at $(x,\mathcal{T}(x))$ and to the sublevel $\mathcal{R}(\mathcal{T}(x))$ at $x\in \mathcal{R}$ have the same dimension.
 
 The paper is organized as follows. In Section \ref{S2} we recall some notions and  preliminary results needed in the sequel. Section \ref{SectV} is devoted to the variational analysis for the minimum time function. Section \ref{SectS} concerns with sensitivity relations. The regularity of the minimum time function is studied in Section \ref{SectR}.
 \section{Preliminaries}\label{S2}
 \subsection{Notations and basic facts}
In this section we recall some basic concepts of nonsmooth analysis. Standard references are in \cite{CLSW,RO}.\\
We denote by $|\cdot|$ the Euclidean norm in $\R^n$,  by $\langle\cdot,\cdot\rangle$ the inner product and by $[x,y]$ the segment connecting two points $x$ and $y$ in $\R^n$. We also denote by $B(x,r)$ the open ball of radius $r>0$ centered at $x$, $\mathbb{S}^{n-1}$ the unit sphere in $\R^n$, and by $\mathbb{M}_{n\times m}(\R)$ the set of all matrices of $n$ rows and $m$ columns. We will use the shortened $\mathbb{B} = B(0,1)$. For any subset $E$ of $\R^n$, we denote by $\mathrm{bdry}E$ its boundary, by $\bar{E}$ its closure, by $\mathrm{co}E$ its convex hull and by $\bar{co}E$ its closed convex hull. A subset $C$ of $\R^n$ is called a cone if and only if $\lambda x \in C$ for any $x\in C$ and $\lambda \ge 0$. We say that $\kappa \in \N$ is the dimension of a cone $C$ if there exist $v_1,\cdots,v_\kappa \in C$ such that they are linearly independent and for any $v\in C$ there exist $\lambda_1,\cdots,\lambda_\kappa \ge 0$ such that $v = \lambda_1 v_1 +\cdots + \lambda_\kappa v_\kappa$.
\\Let $K\subset\R^n$ be a closed subset with boundary $\mathrm{bdry}K$. Denote by $\mathrm{proj}_K(x)$ the projection of $x\in \R^n$ on $K$. Given $x\in K$ and $v\in \R^n$. We say that $v$ is a \textit{proximal normal} to $K$ at $x$ if there exists $\sigma := \sigma(x,v) \ge 0$ such that
\begin{equation}
\label{DefProx}
\langle v,y-x\rangle \le \sigma|y-x|^2, \quad\text{for all}\, y\in K.
\end{equation} 
 We denote the set of all proximal normals to $K$ at $x$ by $N^P_K(x)$ and call it the \textit{proximal normal cone} to $K$ at $x$.
 
 Equivalently, $v\in N^P_K(x)$ if there exist constants $C >0$ and $\eta >0$ such that  
\begin{equation*}
\langle v,y-x\rangle \le C|y-x|^2, \quad\text{for all}\, y\in B(x,\eta) \cap K .
\end{equation*}  
 
Observe that $v\in N^P_K(x)$ if and only if there is some $\lambda>0$ such that $\mathrm{proj}_K(x+\lambda v) = \{x\}$.  Notice that if $K$ is convex, we can take $\sigma = 0$ in (\ref{DefProx}). Hence the proximal normal cone to $K$ at $x$ reduces to the normal cone in the sense of Convex Analysis.

The Clarke normal cone to $K$ at $x$, $N^C_K(x)$, is defined as
$$N^C_K(x) = \bar{co}\{ v\in \R^n: \exists x_i \to x, \exists v_i \to v, v_i \in N^P_K(v_i)  \}.$$
Let $\Omega$ be an open set of $\R^n$ and let $f: \Omega \to \R \cup \{+\infty\}$ be a lower semicomtinuous function. The \textit{domain} of $f$ is the set $\mathrm{dom}(f) := \{x\in \Omega: f(x) <+\infty\}$, the\textit{ epigraph} of $f$ is the set
 $\mathrm{epi}(f) := \{(x,\beta) \in \Omega \times \R: x\in \mathrm{dom}(f), \beta \ge f(x)\}$. Let $x\in \mathrm{dom}(f)$.
 \begin{itemize}
 \item The \textit{proximal subdifferential} of $f$ at $x$ is the set
 \begin{equation*}
 \partial^P f(x) := \{ v \in\ R^n: (v,-1) \in N^P_{\mathrm{epi}(f)}(x,f(x))\}.
 \end{equation*}
 Equivalently,
 \begin{equation*}
 \partial^Pf(x) = \{v\in\R^n: \exists c,\rho >0 \,\,\mathrm{s.t} \,\,f(y) - f(x)-\langle v,y-x\rangle \ge -c|y-x|^2,\,\forall y\in B(x,\rho)\}.
 \end{equation*}
 An element of $\partial^Pf(x)$ is called a proximal subgradient of $f$ at $x$.
 \item The \textit{horizontal proximal subdifferential} of $f$ at $x$ is the set
 \begin{equation*}
 \partial^{\infty}f(x) := \{v\in\R^n: (v,0) \in N^P_{\mathrm{epi}(f)}(x,f(x))\}.
 \end{equation*}
 An element of $\partial^{\infty}f(x)$ is called a proximal horizontal subgradient of $f$ at $x$.
 \item The \textit{Fr\'echet subdifferential} of $f$ at $x$ is the set 
 $$\partial^{-} f(x) := \left\{v\in\R^n: \liminf_{y\to x} \frac{f(y) - f(x) - \langle v,y-x\rangle}{|y-x|} \ge 0    \right\}.$$
 An element of $\partial^{-}f(x)$ is called a Fr\'echet subgradient of $f$ at $x$.
 \item The \textit{Fr\'echet superdifferential} of $f$ at $x$ is the set 
 $$\partial^{+} f(x) := \left\{v\in\R^n: \limsup_{y\to x} \frac{f(y) - f(x) - \langle v,y-x\rangle}{|y-x|} \le 0    \right\}.$$
  An element of $\partial^{+}f(x)$ is called a Fr\'echet supergradient of $f$ at $x$.
 \end{itemize}
  Assume that $f$ is Lipschitz around $x$. The \textit{Clarke's generalized gradient} of $f$ at $x$ is defined by
 $$\partial f(x) := \mathrm{co}\left\{v\in \R^n: \exists \{y_i\} \subset \Omega\,\,\mathrm{s.t.}\,\,f\,\,\text{is differentiable at}\,\,y_i, y_i\to x,\nabla f(y_i)\to v    \right\} $$
For a mapping $G:\R^n \times \R^m \to \R$ associating to $x\in\R^n$ and $y\in \R^m$ a real number, we will denote by $\nabla_xG$, $\nabla_yG$ the partial gradients (when they exist), and by $\partial_xG$, $\partial_yG$ the partial generalized gradients.

 Let $\Omega \subset \R^n$ be open. A function $f:\Omega \to \R$ is called \textit{semiconcave} with semiconcavity constant $c \ge 0$ if $f$ is continuous on $\Omega$ and satisfies
 $$f(x+h) + f(x-h) -2f(x) \le c|h|^2$$
 for all $x,h\in\R^n$ such that $[x-h,x+h] \subset \Omega$. We say that a function $g:\Omega \to \R$ is \textit{semiconvex} if and only if $-g$ is semiconcave. We recall below some useful properties of semiconcave functions.
 \begin{Proposition}\label{ProSC}
 Let $\Omega\subset \R^n$ be open, $f:\Omega \to \R$ be a semiconcave function with semiconcavity constant $c$ and let $x\in \Omega$. Then $f$ is locally Lipschitz on $\Omega$ and the followings hold true
 \begin{itemize}
 \item[(1)] $p\in \R^n$ belongs to $\partial^{+}f(x)$ if and only if for any $y\in \Omega$ such that $[y,x]\subset \Omega$,
 \begin{equation} \label{Eq21}
 f(y) - f(x) -\langle p,y-x\rangle \le  c|y-x|^2
 \end{equation}
 \item[2)] $\partial f(x) = \partial^{+} f(x)$.
\end{itemize}  
 \end{Proposition}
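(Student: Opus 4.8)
The plan is to reduce the whole statement to the classical theory of concave functions through the standard observation that semiconcavity of $f$ with constant $c$ amounts to concavity of the shifted function $g:=f-\tfrac{c}{2}|\cdot|^2$ on convex subsets of $\Omega$. Indeed, since $|x+h|^2+|x-h|^2-2|x|^2=2|h|^2$, the defining inequality $f(x+h)+f(x-h)-2f(x)\le c|h|^2$ is equivalent to $g(x+h)+g(x-h)-2g(x)\le 0$, i.e.\ to midpoint concavity of $g$; together with the continuity built into the definition, this upgrades to genuine concavity of $g$ on every ball $B(x,r)\subset\Omega$. Local Lipschitz continuity of $f$ then follows immediately, because a finite concave function is locally Lipschitz on the interior of its domain and $f$ is the sum of such a $g$ with the smooth map $\tfrac{c}{2}|\cdot|^2$.

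For part (1) I would first record the elementary calculus fact that, since $\phi:=\tfrac{c}{2}|\cdot|^2$ is of class $C^1$ with $\nabla\phi(x)=cx$, the Fr\'echet superdifferential is shift covariant: $p\in\partial^{+}f(x)$ if and only if $p-cx\in\partial^{+}g(x)$ (the $o(|y-x|)$ error from the first-order expansion of $\phi$ drops out of the relevant $\limsup$). Next I would prove the classical description $\partial^{+}g(x)=\{q:\ g(y)-g(x)\le\langle q,y-x\rangle\ \text{for all}\ y\ \text{with}\ [x,y]\subset\Omega\}$ for the concave function $g$: the inclusion $\supseteq$ is immediate because the difference quotient has a nonpositive numerator, while for $\subseteq$ one evaluates along $y_t=x+t(y-x)$, uses concavity to bound the difference quotient from below by the fixed number $[g(y)-g(x)-\langle q,y-x\rangle]/|y-x|$, and lets $t\to 0^{+}$. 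Substituting $g=f-\phi$ and $q=p-cx$ and simplifying (the cross terms collapse exactly to $-\tfrac{c}{2}|y-x|^2$), the supporting inequality for $g$ becomes $f(y)-f(x)-\langle p,y-x\rangle\le\tfrac{c}{2}|y-x|^2$. As $\tfrac{c}{2}\le c$, this yields (\ref{Eq21}); conversely any $p$ satisfying (\ref{Eq21}) has $\limsup_{y\to x}\frac{f(y)-f(x)-\langle p,y-x\rangle}{|y-x|}\le 0$, hence $p\in\partial^{+}f(x)$. This establishes the equivalence (the sharp constant being $\tfrac{c}{2}$, with (\ref{Eq21}) the weaker form stated).

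For part (2), the inclusion $\partial^{+}f(x)\subseteq\partial f(x)$ is the general fact that, for a locally Lipschitz function, the Fr\'echet superdifferential is contained in Clarke's generalized gradient (via $\partial^{+}f(x)=-\partial^{-}(-f)(x)$ and $\partial^{-}(-f)(x)\subseteq\partial(-f)(x)=-\partial f(x)$). For the reverse inclusion I would exploit part (1). First note that the characterization there exhibits $\partial^{+}f(x)$ as an intersection of closed half-spaces in the variable $p$, hence as a closed convex set. Now take $v\in\partial f(x)$; it suffices to treat $v=\lim_i\nabla f(y_i)$ with $f$ differentiable at $y_i\to x$, since $\partial f(x)$ is the convex hull of such limits and $\partial^{+}f(x)$ is convex. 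At each $y_i$ one has $\nabla f(y_i)\in\partial^{+}f(y_i)$, so by (1), $f(z)-f(y_i)-\langle\nabla f(y_i),z-y_i\rangle\le\tfrac{c}{2}|z-y_i|^2$ for all admissible $z$; letting $i\to\infty$, using $y_i\to x$, $\nabla f(y_i)\to v$ and continuity of $f$, gives $f(z)-f(x)-\langle v,z-x\rangle\le\tfrac{c}{2}|z-x|^2$, whence $v\in\partial^{+}f(x)$ by (1). Therefore $\partial f(x)\subseteq\partial^{+}f(x)$ and equality holds.

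I expect the main obstacle to be the limit argument in part (2): one must guarantee the segment conditions $[y_i,z]\subset\Omega$ hold uniformly for $z$ in a fixed segment $[x,z_0]\subset\Omega$ once $y_i$ is close enough to $x$ (a routine openness/compactness step), and one must extract the closedness and convexity of $\partial^{+}f(x)$ correctly from part (1) so that the convex hull of the limiting gradients stays inside it. The passage from midpoint concavity to concavity also quietly uses the continuity hypothesis, which is why it is part of the definition. Everything else is standard convex and nonsmooth calculus.
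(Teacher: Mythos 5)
Your proof is correct, but note that the paper itself offers nothing to compare it with: Proposition \ref{ProSC} appears in the Preliminaries as a recalled standard fact, stated without proof and referred to the book of Cannarsa and Sinestrari \cite{CS}. Judged on its own, your argument is the classical one and it goes through. The reduction $g=f-\tfrac{c}{2}|\cdot|^2$ is exactly right: the parallelogram identity converts the semiconcavity inequality into midpoint concavity of $g$, continuity upgrades this to concavity on every convex subset of $\Omega$, and local Lipschitz continuity of $f$ follows from the same property of finite concave functions. In part (1) your computation actually yields the sharper bound with constant $\tfrac{c}{2}$; since conversely any $p$ satisfying (\ref{Eq21}) with constant $c$ lies in $\partial^{+}f(x)$, both inequalities carve out the same set and the equivalence as stated holds. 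Two points deserve the explicit care you partly flagged: first, $g$ is concave only on convex subsets of $\Omega$, so the ray argument characterizing $\partial^{+}g(x)$ should be run inside a convex $\varepsilon$-neighborhood of the segment $[x,y]$ (which exists by compactness of the segment and openness of $\Omega$, and is convex as the Minkowski sum of a segment and a ball); second, in part (2) the passage to the limit needs $[y_i,z]\subset\Omega$ for large $i$, which follows from the same $\varepsilon$-neighborhood since every point of $[y_i,z]$ is within $|y_i-x|$ of $[x,z]$. With those observations, the inclusion $\partial^{+}f(x)\subseteq\partial f(x)$ is the standard fact for Lipschitz functions, and your limit-plus-convexity argument (using that part (1) exhibits $\partial^{+}f(x)$ as an intersection of closed half-spaces, hence closed and convex, so it absorbs the convex hull of limits of gradients) correctly gives the reverse inclusion. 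In short: a complete, self-contained proof of a statement the paper leaves to the literature.
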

 If $f$ is semiconvex, then (\ref{Eq21}) holds with the reversed inequality and the reversed sign of the quadratic term and the statement (2) holds true with the subdifferential instead of the superdifferential. For further properties and characterizations of semiconcave/semiconvex functions, we refer the reader to \cite{CS}.
 \begin{Definition}
 Suppose $K\subset \R^n$ is closed and $\varphi: K\to [0,+\infty)$ is continuous. We say that $K$ is $\varphi$-convex if
 \begin{equation} \label{Eq22}
 \langle v,y-x\rangle \le \varphi(x)|v||y-x|^2,
 \end{equation}
 for all $x,y\in K$ and $v\in N^P_K(x)$.
 \end{Definition}
 The case when $\varphi \equiv 0$ in (\ref{Eq22}) is equivalent to the convexity of $K$. Therefore, $\varphi$-convexity is a generalization of convexity. Moreover, if the boundary of $K$ is the graph of a $C^{1,1}$ function then $K$ is $\varphi$-convex with $\varphi$ is a suitable constant function. Functions whose epigraph is $\varphi$-convex enjoy good regularity properties which are similar to properties of convex functions. Denote by $\mathcal{L}^n$ and $\mathcal{H}^d$ the Lebesgue $n$-dimensional measure and the Hausdorff $d$-dimensional measure, respectively. We recall here some regularity properties of functions whose epigraph is $\varphi$-convex (see \cite{GCAM}).
 \begin{Proposition}  \label{ProPRF}
 Let $\Omega \subset \R^n$ be open and let $f:\Omega \to \R$ be continuous and such that $\mathrm{epi}(f)$ is $\varphi$-convex for some suitable continuous function $\varphi$. Then there exists a sequence of sets $\Omega_h\subset \Omega$ such that $\Omega_h$ is compact in $\Omega$ and
 \begin{itemize}
 \item[(i)] the union of $\Omega_h$ covers $\mathcal{L}^n$-almost all $\Omega$;
 \item[(ii)] for all $x\in \cup_{h}\Omega_h$, there exist $\delta=\delta(x)>0$, $L=L(x)>0$ such that $f$ is Lipschitz on $B(x,\delta)$ with ratio $L$, and hence semiconvex on $B(x,\delta)$.
 \end{itemize}
 Consequently,
 \begin{itemize}
 \item[(iii)] $f$ is a.e. Fr\'echet differentiable and admits a secon order Taylor expansion around a.e. point of its domain.
 \end{itemize}
 Moreover, the set of points where the graph of $f$ is nonsmooth has small Hausdorff dimension. More precisely,
 \begin{itemize}
 \item[(iv)] for every $k=1,\cdots,n$, the set $\{x\in \mathrm{int}\mathrm{dom}(f): \dim\partial^Pf(x)\ge k\}$ is countably $\mathcal{H}^{n-k}$-rectifiable.
 \end{itemize}
 Finally,
 \begin{itemize}
 \item[(v)] $f$ is of locally bounded variation in $\Omega$.
 \end{itemize}
 \end{Proposition}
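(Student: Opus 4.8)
Since this proposition only recalls results from \cite{GCAM}, I will sketch the strategy one would follow to derive (i)--(v) from the sole hypothesis that $\mathrm{epi}(f)$ is $\varphi$-convex. The crux of the whole argument is a local semiconvexity reduction: at every $x_0 \in \mathrm{int}\,\mathrm{dom}(f)$ for which $\partial^P f(x_0) \neq \emptyset$, the function $f$ should be semiconvex on a neighborhood of $x_0$. To see this, pick $p \in \partial^P f(x_0)$, so that $(p,-1) \in N^P_{\mathrm{epi}(f)}(x_0, f(x_0))$, and feed the pair $\big((x_0,f(x_0)),(p,-1)\big)$ into the $\varphi$-convexity inequality (\ref{Eq22}) with test points $(y, f(y))$. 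This yields the one-sided quadratic estimate $f(y) - f(x_0) \ge \langle p, y - x_0 \rangle - C\big(|y-x_0|^2 + (f(y)-f(x_0))^2\big)$ for $y$ near $x_0$, with $C=\varphi\sqrt{|p|^2+1}$. The first thing I would extract, by a bootstrap exploiting the nonzero last coordinate of the normal (which forbids vertical walls and makes the graph a Lipschitz graph near $x_0$ via uniform proximal smoothness), is local Lipschitz continuity of $f$ together with a uniform bound on $\partial^P f$ nearby; substituting the Lipschitz bound back converts the estimate into the genuine semiconvexity inequality of Proposition \ref{ProSC}.

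With this reduction in hand I would define $\Omega_h$ quantitatively: fix a compact exhaustion of $\Omega$ and let $\Omega_h$ collect those $x$ admitting $p\in\partial^P f(x)$ with $|p|\le h$ and prox constant controlled by $h$, staying at distance $\ge 1/h$ from $\mathrm{bdry}\,\Omega$. By the reduction lemma each $\Omega_h$ carries a uniform semiconvexity constant, which is exactly statement (ii), and relative compactness is built in. Statement (i) --- that $\cup_h\Omega_h$ exhausts $\mathcal{L}^n$-almost all of $\Omega$ --- is the main obstacle. It amounts to proving that the \emph{singular set} $\Sigma = \{x\in\mathrm{int}\,\mathrm{dom}(f): \partial^P f(x)=\emptyset\}$, the ``vertical'' points where $\mathrm{epi}(f)$ admits only horizontal proximal normals, is Lebesgue-null. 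Here I would use that the graph of $f$ is itself a $\varphi$-convex, hence locally $\mathcal{H}^n$-finite and $n$-rectifiable, hypersurface $S\subset\R^{n+1}$: the piece $S_0=\{(x,f(x)):x\in\Sigma\}$ is exactly where the approximate normal to $S$ is horizontal, so the Jacobian of the projection $\pi:S\to\R^n$ vanishes $\mathcal{H}^n$-a.e. on $S_0$, and the area formula gives $\mathcal{L}^n(\Sigma)=\mathcal{L}^n(\pi(S_0))\le\int_{S_0}|J\pi|\,d\mathcal{H}^n=0$. This is the step where the uniform prox-regularity furnished by $\varphi$-convexity, rather than mere measurability, is indispensable.

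The remaining items follow by combining the reduction with classical facts. For (iii), on each $\Omega_h$ the function $f$ is semiconvex, hence twice differentiable $\mathcal{L}^n$-a.e. with a second order Taylor expansion by Alexandrov's theorem; since by (i) these sets cover $\Omega$ up to a null set, $f$ enjoys a.e. Fr\'echet differentiability and an a.e. second order expansion on all of $\mathrm{dom}(f)$. For (iv) I would work locally on the open sets where $f$ is semiconvex and invoke the Alberti--Ambrosio--Cannarsa stratification of the singular set of a semiconvex function (equivalently, of the superdifferential of $-f$): there $\{x:\dim\partial^P f(x)\ge k\}$ is countably $\mathcal{H}^{n-k}$-rectifiable, and the estimates patch across the countable family $\{\Omega_h\}$, while the already $\mathcal{L}^n$-null set $\Sigma$ is absorbed into the top stratum $k=n$.

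Finally (v): the gradient of a semiconvex function is locally of bounded variation, because its distributional Hessian is a matrix-valued Radon measure bounded below, so $\nabla f\in BV_{loc}$ and $f\in BV_{loc}$ on $\cup_h\Omega_h$. The uniform one-sided quadratic bound coming from (\ref{Eq22}) provides precisely the control needed to sum these local variation estimates over the exhaustion and to rule out concentration of variation on the null set $\Sigma$, yielding $f\in BV_{loc}(\Omega)$. The one genuinely delicate point throughout, and the step I expect to cost the most work, is the nullity of $\Sigma$ in (i): everything downstream is a bookkeeping of semiconvex regularity, but that measure-zero statement is where the quantitative geometry of $\varphi$-convex sets must be used in full.
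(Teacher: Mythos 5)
Your central ``reduction lemma'' is false, and the decomposition you build on it collapses. You claim that if $\partial^P f(x_0)\neq\emptyset$ then $\varphi$-convexity of $\mathrm{epi}(f)$ forces $f$ to be Lipschitz, hence semiconvex, near $x_0$, because ``the nonzero last coordinate of the normal forbids vertical walls''. Take $f(x)=\sqrt{|x|}$ on $(-1,1)$. Its epigraph is $\varphi$-convex with $\varphi\equiv 1$: every $(y,\beta)\in\mathrm{epi}(f)$ satisfies $|y|\le\beta^2$; at a graph point $(t^2,t)$, $t>0$, the proximal normal cone is the ray through $v=(1,-2t)$ and $\langle v,(y-t^2,\beta-t)\rangle\le \beta^2-2t\beta+t^2=(\beta-t)^2\le|v|\,\bigl|(y,\beta)-(t^2,t)\bigr|^2$ (symmetrically on the left branch), while at the origin $N^P_{\mathrm{epi}(f)}(0,0)=\{(p,\alpha):\alpha\le 0\}$ and $\langle (p,\alpha),(y,\beta)\rangle\le|p|\,|y|\le|p|\beta^2$. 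In particular $(0,-1)\in N^P_{\mathrm{epi}(f)}(0,0)$, so $0\in\partial^P f(0)$ (indeed $\partial^P f(0)=\R$), yet $f$ is not Lipschitz in any neighborhood of $0$, and not semiconvex there since $f(2h)+f(0)-2f(h)=(\sqrt 2-2)\sqrt h$ violates every bound of the form $\ge -ch^2$. The flaw is that the normal cone at $(x_0,f(x_0))$ can contain $(p,-1)$ and nonzero \emph{horizontal} vectors simultaneously (here $(\pm 1,0)$), and it is the horizontal normals, not the absence of downward ones, that encode non-Lipschitz behavior. Consequently your sets $\Omega_h$ (points admitting a proximal subgradient with bounds $\le h$) contain $x=0$ in this example, so your construction fails property (ii).

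The correct dichotomy, which is the one used in \cite{GCAM} (note the paper itself offers no proof, only this citation), takes as good set $\{x:\partial^\infty f(x)=\{0\}\}$: where no nonzero horizontal proximal normal exists, the locally uniform proximal constants make the set of unit proximal normals closed under limits in both base point and vector, hence the slopes $|p|/|\alpha|$ of normals $(p,\alpha)$ at nearby graph points are uniformly bounded, and the external-ball (proximal subgradient) inequalities at \emph{all} nearby points then yield a two-sided Lipschitz estimate and semiconvexity. The bad set is $\Sigma'=\{x:\partial^\infty f(x)\neq\{0\}\}$, which strictly contains your $\Sigma=\{x:\partial^P f(x)=\emptyset\}$ (in the example, $0\in\Sigma'$ but $0\notin\Sigma$). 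Your rectifiability/area-formula argument for Lebesgue-nullity is the right kind of argument but is aimed at the wrong set; fortunately it transfers verbatim to $\Sigma'$, since a nonzero horizontal proximal normal forces the approximate tangent plane of the graph, where it exists, to contain the vertical direction. Finally, your patching for item (v) is too loose: locally bounded variation on an open set of full measure plus nullity of the complement does not by itself give $f$ locally BV in $\Omega$, since the variation may concentrate near the bad set; this item requires the quantitative estimates of \cite{GCAM} rather than bookkeeping.
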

 \subsection{Differential inclusions and the minimum time function}
 Let $F:\R^n \rightrightarrows \R^n$ be a given multifunction. We consider the differential inclusion, for $T>0$,
  \begin{equation}\label{DI}
 \left\{
  \begin{array}{lcl}
    \dot{x}(s) & \in &F(x(t)),\quad\quad \mathrm{a.e.}\,\, t\in [0,T] \\
    x(0)& = & x_0 \in \R^n
  \end{array}
  \right.
 \end{equation}
 A \textit{solution}  of (\ref{DI}) is an absolutely continuous function $x(\cdot)$ defined on $[0,T]$ with initial value $x(0) = x_0$. We also say that $x(\cdot)$ is a \textit{trajectory} of $F$ starting at $x$. The notion $\dot{x}(t)$ refers to the derivative of $x(\cdot)$ at the time $t$ and it is the right derivative  if $t=0$.
 \\Throughout this paper, we require the following assumptions on the multifunction $F$.\\
 \textbf{Assumption (F).}
 \begin{itemize}
 \item[(F1)] $F(x)$ is nonempty, convex, and compact for each $x\in \R^n$.
 \item[(F2)] $F$ is locally Lipschitz, i.e. for each compact set $K \subset \R^n$, there exists a constant $L>0$ such that
 $$F(x) \subset F(y) +L|y-x|\bar{\mathbb{B}},\quad\text{for all}\, x,y\in K.$$
 \item[(F(3)] there exists $\gamma >0$ such that $\mathrm{max}\{|v|: v\in F(x)\} \le \gamma(1+|x|)$, for all $x\in \R^n$.
 \end{itemize}
 The following theorem gives some information regarding $C^1$ trajectories of $F$ under assumption (F) which will be useful in the sequel
 \begin{Theorem}[see, e.g., \cite{WYu}] \label{C1}
 Assume that assumption (F) holds true. Let $K\subset \R^N$ be compact. Then there exists $T>0$ such that associated to every $x\in K$ and $v\in F(x)$ is a trajectory $x(\cdot)$ defined on $[0,T]$ with $\dot{x}(0) = v$. Moreover, for all $t\in [0,T]$, we have $|\dot{x}(t) - v| \le Mt$, for some constant $M$ independent of $x$.
 \end{Theorem}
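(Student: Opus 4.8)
The plan is to realize the required trajectory as the solution of the projected differential equation
\[
\dot{x}(t) = \mathrm{proj}_{F(x(t))}(v), \qquad x(0)=x,
\]
whose right-hand side is a genuine single-valued vector field because each $F(y)$ is convex and compact, so the metric projection onto it is well defined. The point of this construction is threefold: since $v\in F(x)$ we have $\mathrm{proj}_{F(x)}(v)=v$, which forces $\dot{x}(0)=v$; the selected velocity always lies in $F(x(t))$, so $x(\cdot)$ is an honest trajectory of $F$; and, because the projection realizes the distance to $F(x(t))$, the deviation $|\dot{x}(t)-v|$ equals $\mathrm{dist}(v,F(x(t)))$, which the Lipschitz hypothesis (F2) controls linearly in $t$.

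First I would fix the common horizon and the ambient compact set. Writing $c_K=\max_{y\in K}|y|$ and using the linear growth (F3) together with Gronwall's inequality, any solution of the above equation issued from a point of $K$ satisfies $1+|x(t)|\le (1+c_K)e^{\gamma t}$, so for $t\in[0,1]$ it remains in the ball $\tilde K:=\bar B(0,R)$ with $R:=(1+c_K)e^{\gamma}-1$. I then set $T:=1$, let $L$ be the Lipschitz constant of $F$ on $\tilde K$ supplied by (F2), and put $M_0:=\gamma(1+R)$, so that $|\dot{x}(t)|\le M_0$ along any such solution. All of these constants depend only on $K$, not on the particular $x\in K$ or $v\in F(x)$.

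Next comes existence. I would check that the map $y\mapsto \mathrm{proj}_{F(y)}(v)$ is continuous: if $y_k\to y$ then $F(y_k)\to F(y)$ in the Hausdorff metric by (F2), the projections $p_k:=\mathrm{proj}_{F(y_k)}(v)$ are bounded, and any cluster point $p$ belongs to $F(y)$ with $|v-p|=\mathrm{dist}(v,F(y))$, whence $p=\mathrm{proj}_{F(y)}(v)$ by convexity of $F(y)$; uniqueness of the limit gives continuity. Since this field is also bounded on $\tilde K$, Peano's theorem yields a local solution, and the a priori bound of the previous step lets me continue it to the whole interval $[0,T]$ while keeping it inside $\tilde K$. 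The resulting $x(\cdot)$ is $C^1$ because $\dot{x}(t)=\mathrm{proj}_{F(x(t))}(v)$ is a composition of continuous maps, and it is a trajectory since $\dot{x}(t)\in F(x(t))$ for every $t$.

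Finally I would derive the quantitative estimate. As $v\in F(x)=F(x(0))$, the inclusion in (F2) gives $F(x(0))\subset F(x(t))+L|x(t)-x(0)|\bar{\mathbb{B}}$, hence $\mathrm{dist}(v,F(x(t)))\le L|x(t)-x(0)|$; since the projection attains this distance, $|\dot{x}(t)-v|=\mathrm{dist}(v,F(x(t)))\le L|x(t)-x(0)|$. Combining this with $|x(t)-x(0)|\le\int_0^t|\dot{x}(s)|\,ds\le M_0 t$ yields $|\dot{x}(t)-v|\le M t$ with $M:=LM_0$, which is independent of $x$, as claimed. The only delicate point is that the projection field is merely continuous, not Lipschitz, so Picard iteration is unavailable and one must argue existence through Peano together with the continuation/Gronwall bound; uniqueness is neither available nor needed here.
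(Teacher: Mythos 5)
Your proposal is correct, and it is essentially the standard argument: the paper itself offers no proof of this statement (it is quoted from \cite{WYu}), and the construction there is the same projection-selection device you use, namely solving $\dot{x}(t)=\mathrm{proj}_{F(x(t))}(v)$, $x(0)=x$. All the points that need care are handled: single-valuedness of the projection via (F1), continuity of $y\mapsto \mathrm{proj}_{F(y)}(v)$ via (F2) plus uniqueness of projections onto convex sets, existence by Peano with the Gronwall/continuation bound from (F3) (Picard is indeed unavailable), and the estimate $|\dot{x}(t)-v|=\mathrm{dist}(v,F(x(t)))\le L|x(t)-x|\le LM_0t$ with constants depending only on $K$; this also yields the $C^1$ regularity of the trajectory, which is exactly what the paper needs when it invokes this theorem.
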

 We now assume that a closed subset $\mathcal{K}$ of $\R^n$ is given which is called the target and $F:\R^n \rightrightarrows \R^n$ is a multifunction. We define the minimum time function $\mathcal{T}: \R^n \to [0,+\infty]$ as follows. If $x\not\in\mathcal{K}$ then
 \begin{equation}
 \label{DefT}
 \mathcal{T}(x): = \inf\{T>0:\exists x(\cdot)\,\,\text{satisfying}\,\,(\ref{DI})\,\,\text{with}\,\,x(0) =x\,\,\text{and}\,\,x(T) \in \mathcal{K}\}.
 \end{equation}
 If there is no trajectory of $F$ starting at $x$ can reach $\mathcal{K}$, then  $\mathcal{T}(x) = +\infty$ as the usual convention. If $x\in \mathcal{K}$ then we set $\mathcal{T}(x)=0$.
 \\It is well-known that, under assumption (F), the infimum in (\ref{DefT}) is attained and the minimum time function $\mathcal{T}$ is lower semicontinuous (see, e.g., \cite{WYu}).\\
 For $t>0$,  set
 $$\mathcal{R}(t): = \{x\in \R^n: \mathcal{T}(x) \le t\},$$
 the \textit{controllable set} is the set $$\mathcal{R}: = \{x\in\R^n: \mathcal{T}(x) <+\infty\} = \bigcup_{t\ge0}\mathcal{R}(t)$$
and  the \textit{attainable set} from $\mathcal{K}$ at time $t$ is the set
 $$\mathcal{A}(t): = \{x(t): x(\cdot) \,\,\text{solves}\,\,(\ref{DI})\,\,\text{with}\,\,x(0)\in \mathcal{K}\}.$$
The set $\mathcal{A}(t)$ is also referred to as the \textit{reachable set}, or the \textit{accessibility set}, from $\mathcal{K}$ at time $t$. It is well-known that, under assumption (F), $\mathcal{R}(t)$ and $\mathcal{A}(t)$ are compact for every $t$ (see, e.g.,\cite{AC}). 
\section{Variational analysis results}\label{SectV}
This section is devoted to the variational analysis of the minimum time function for differential inclusion under assumption (F) only. Recall that the minimized Hamiltonian associated to $F$ is the function $h:\R^n\times \R^n \to \R$ defined by
\begin{equation}
\label{DefMH}
h(x,\zeta) = \min_{v\in F(x)}\langle v,\zeta\rangle \qquad \forall x,\zeta \in \R^n.
\end{equation}
In \cite{WYu}, the authors proved the following interesting characterizations of the proximal subdifferential of the minimum time function $\mathcal{T}$ at points inside the target as well as outside the target.
\begin{Theorem}\cite{WYu} \label{WY}
Assume that the multifunction $F$ satisfies assumption (F).
\begin{itemize}
\item[(a)] For all $x\in \mathcal{K}$, we have
$$\partial^P\mathcal{T}(x) = N^P_\mathcal{K}(x) \cap \{\zeta\in \R^n: h(x,\zeta) \ge -1\}.$$
\item[(b)] Whenever $r>0$ and $\mathcal{T}(x) = r$, then we have
$$\partial^P\mathcal{T}(x) = N^P_{\mathcal{R}(r)}(x) \cap \{\zeta\in \R^n: h(x,\zeta) = -1\}.$$
\end{itemize}
\end{Theorem}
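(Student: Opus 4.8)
The plan is to prove each of the two set equalities by establishing both inclusions, relying only on the definition of the proximal subdifferential through its quadratic support inequality, on the dynamic programming principle (DPP) $\mathcal{T}(x(t))\ge\mathcal{T}(x)-t$ valid along \emph{every} trajectory (with equality along an optimal one, which exists under (F)), and on the fine control of $C^1$ trajectories provided by Theorem \ref{C1}. Throughout I would use that $F$ is Lipschitz (F2), so that $h(\cdot,\zeta)$ is Lipschitz in its first variable and $\langle\zeta,\dot x(\tau)\rangle\ge h(x(\tau),\zeta)$ along any trajectory.

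\emph{Forward inclusion} ($\subseteq$). Let $\zeta\in\partial^P\mathcal{T}(x)$ with support constants $c,\rho$. Restricting the support inequality $\mathcal{T}(y)-\mathcal{T}(x)-\langle\zeta,y-x\rangle\ge-c|y-x|^2$ to $y\in\mathcal{K}$ (part (a), where $\mathcal{T}(x)=0$), respectively $y\in\mathcal{R}(r)$ (part (b), where $\mathcal{T}\le r$), immediately gives $\zeta\in N^P_\mathcal{K}(x)$, respectively $\zeta\in N^P_{\mathcal{R}(r)}(x)$. For $h(x,\zeta)\ge-1$ I would invoke the DPP together with the standard proximal strong-invariance characterization from \cite{CLSW}: since $t\mapsto\mathcal{T}(x(t))+t$ is nondecreasing along every trajectory, each proximal subgradient satisfies $h(x,\zeta)+1\ge0$; this is the step that genuinely uses the Lipschitz continuity of $F$. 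In part (b) I would additionally feed the optimal trajectory $x^*(\cdot)$, with $\mathcal{T}(x^*(t))=r-t$ and $\dot x^*(0)=v^*\in F(x)$, into the support inequality; using $x^*(t)-x=tv^*+O(t^2)$ from Theorem \ref{C1} and dividing by $t\to0^+$ yields $\langle\zeta,v^*\rangle\le-1$, hence $h(x,\zeta)\le-1$ and so $h(x,\zeta)=-1$. In part (a) no decreasing trajectory is available from a target point, which is precisely why only the one-sided bound $h\ge-1$ survives.

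\emph{Reverse inclusion} ($\supseteq$). Given $\zeta$ in the right-hand set, I must produce $c,\rho$ with $\mathcal{T}(y)\ge\mathcal{T}(x)+\langle\zeta,y-x\rangle-c|y-x|^2$ on $B(x,\rho)$. The guiding idea is to transport a test point $y$ along an optimal trajectory onto the set where $\zeta$ is already controlled (namely $\mathcal{K}$ in part (a), $\mathrm{bdry}\,\mathcal{R}(r)$ in part (b)), converting the travel time into the linear term via $h(x,\zeta)=-1$. Concretely, I would split on the size of $\mathcal{T}(y)$ relative to $|y-x|$: when $\mathcal{T}(y)$ is large compared with $|\zeta|\,|y-x|$ the inequality is trivial, since the left-hand side dominates; when $\mathcal{T}(y)\le C|y-x|$, I run the optimal trajectory $y(\cdot)$ from $y$, write $\langle\zeta,y-x\rangle=\langle\zeta,y-y(\tau_0)\rangle+\langle\zeta,y(\tau_0)-x\rangle$ for the appropriate exit time $\tau_0$, bound the second term by the proximal-normal inequality, and bound the first by $\int_0^{\tau_0}-\langle\zeta,\dot y\rangle\,d\tau\le\tau_0+O(\tau_0^2)$ using $\langle\zeta,\dot y(\tau)\rangle\ge h(y(\tau),\zeta)\ge h(x,\zeta)-L|\zeta|\,|y(\tau)-x|=-1+O(|y-x|)$. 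On the regime $\mathcal{T}(y)\le C|y-x|$ all error terms are genuinely $O(|y-x|^2)$, which closes part (a) completely and closes part (b) for $y\notin\mathcal{R}(r)$ (where $\tau_0=\mathcal{T}(y)-r$ and $y(\tau_0)$ lands on the level-$r$ boundary).

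The main obstacle is part (b) for test points strictly inside the sublevel set, i.e.\ $y\in\mathcal{R}(r)$ with $\mathcal{T}(y)<r$: these cannot be steered forward onto $\mathrm{bdry}\,\mathcal{R}(r)$ (that would require reversing the dynamics), so the transport trick fails and the bare inequality $\langle\zeta,y-x\rangle\le\sigma|y-x|^2$ is too weak to produce the required gain $-(r-\mathcal{T}(y))$. Here I would instead certify that such $y$ lie sufficiently deep in the $-\zeta$ direction by comparing an optimal trajectory from $y$ with the reference optimal trajectory from $x$ via a Filippov-type estimate, controlling the drift by $e^{L\tau}|y-x|$ and again using $h(x,\zeta)=-1$ to pin the rate; equivalently, one appeals to $\mathcal{T}$ being the unique lower semicontinuous proximal solution of the associated Hamilton--Jacobi problem and extracts the sharp quadratic minorant by a verification argument. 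Reconciling this interior estimate with the pointwise datum at $x$ \emph{without} Petrov's condition—so that $\mathcal{T}$ need not be Lipschitz and proximity to the target or to a level set carries no a priori time bound—is the delicate heart of the proof, and is where the quadratic acceleration control of Theorem \ref{C1} is indispensable.
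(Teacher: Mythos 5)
First, a point of orientation: the paper does not prove Theorem \ref{WY} at all; it is imported verbatim from \cite{WYu}. The closest in-paper arguments are the proofs of the horizontal analogues, Theorems \ref{THM1} and \ref{THM3}, and they exhibit exactly the technique this statement needs. Measured against that technique, your forward inclusions and the reverse inclusion in part (a) are sound: restricting the subgradient inequality to $\mathcal{K}$, respectively $\mathcal{R}(r)$, gives the normal cone memberships; the bound $h(x,\zeta)\ge -1$ does follow from the monotonicity of $t\mapsto \mathcal{T}(x(t))+t$ via the strong-monotonicity characterization in \cite{CLSW} applied to the time-augmented, Lipschitz system; and your forward transport along the optimal trajectory from $y$, with the Lipschitz estimate $h(y(\tau),\zeta)\ge h(x,\zeta)-L|\zeta||y(\tau)-x|$ and the split on the size of $\mathcal{T}(y)$ versus $|y-x|$, is the same computation as in Theorems \ref{THM1} and \ref{THM3}. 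One slip in the forward inclusion of (b): you cannot get $\dot x^*(0)=v^*$ and $x^*(t)-x=tv^*+O(t^2)$ from Theorem \ref{C1}, which constructs trajectories with prescribed initial velocity and says nothing about differentiability of a given \emph{optimal} trajectory at $0$. The standard repair, used repeatedly in this paper, is projection: by (F2) and convexity of $F(x)$, $\frac{1}{t}\left(x^*(t)-x\right)\in F(x)+LMt\bar{\mathbb{B}}$, hence $\bigl\langle \zeta,\frac{1}{t}(x^*(t)-x)\bigr\rangle \ge h(x,\zeta)-LMt|\zeta|$, and $h(x,\zeta)\le -1$ follows in the limit.

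The genuine gap is exactly the case you flag: the reverse inclusion of (b) at points $y$ with $\mathcal{T}(y)<r$, and neither of your proposed repairs closes it. A Filippov comparison of the optimal trajectories from $y$ and from $x$ only yields distance estimates between arcs; converting a spatial distance into the required time gain $r-\mathcal{T}(y)$ needs Petrov-type controllability, which is not assumed. Appealing to $\mathcal{T}$ being the unique lsc proximal solution of the Hamilton--Jacobi equation is circular: that uniqueness theorem is the main result of \cite{WYu} itself, sitting at the same level as (indeed downstream of) the characterization being proved, and a ``verification argument'' leaves the quadratic minorant unestablished. The missing idea is \emph{backward} transport. Apply Theorem \ref{C1} to $-F$ (which also satisfies (F)) to get a $C^1$ trajectory $z(\cdot)$ of $-F$ with $z(0)=y$ and $-\dot z(0)=w_y$, where $w_y$ minimizes $\langle\zeta,\cdot\rangle$ over $F(y)$; then $\langle\zeta,-\dot z(\tau)\rangle\le h(y,\zeta)+M|\zeta|\tau\le -1+L|\zeta||y-x|+M|\zeta|\tau$. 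Run it for time $s:=r-\mathcal{T}(y)$, which is small for $y$ near $x$ by lower semicontinuity of $\mathcal{T}$. Since reversing $z$ on $[0,s]$ produces a trajectory of $F$ reaching $y$ in time $s$, one has $\mathcal{T}(z(s))\le\mathcal{T}(y)+s=r$, so $z(s)\in\mathcal{R}(r)$ and the proximal normal inequality gives $\langle\zeta,z(s)-x\rangle\le\sigma\left(Ms+|y-x|\right)^2$. Adding the two pieces yields $\langle\zeta,y-x\rangle\le -(r-\mathcal{T}(y))+C\left(|y-x|^2+|r-\mathcal{T}(y)|^2\right)$, which is precisely the proximal inequality for $(\zeta,-1)$ at $(x,r)$ in the epigraph form of the definition of $\partial^P\mathcal{T}(x)$ (the extra $|r-\mathcal{T}(y)|^2$ term is absorbed using $\beta\ge\mathcal{T}(y)$ and the smallness of $r-\mathcal{T}(y)$; note that your pointwise form of the target inequality cannot absorb it directly). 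This reversal device, trajectories of $-F$ with Hamiltonian-minimizing initial velocity, is exactly what Case 2 of the proofs of Theorems \ref{THM1} and \ref{THM3} runs on; without it the interior case, and hence part (b), cannot be completed.
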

The next result is the first main result of the current paper which is similar to the result in Theorem \ref{WY}, but is proven for the proximal horizontal sudifferential.
 \begin{Theorem} \label{THM1}
 Assume that the multifunction $F$ satisfies assumption (F). 
\begin{itemize}
\item[(a)] Let $x_0\in \mathcal{K}$. We have
 \begin{equation}
 \label{Eq31}
 \partial ^{\infty} \mathcal{T}(x_0) = N_{\mathcal{K}}^P (x_0) \cap \{\zeta \in \R^n: h(x_0,\zeta) \ge 0\}.
 \end{equation}
 \item[(b)] Let $x_0 \in \mathcal{R}\setminus \mathcal{K}$. We have
 \begin{equation}
 \partial ^\infty \mathcal{T}(x_0) = N_{\mathcal{R}(\mathcal{T}(x_0))}^P (x_0) \cap \left\{\zeta\in\R^n: h(x_0,\zeta) = 0\right\}.
 \end{equation}
\end{itemize} 
 \end{Theorem}
 Before beginning the proof of Theorem \ref{THM1}, we prove the following lemma.
 
 \begin{Lemma} \label{THM2}
 Assume  (F). Let $x_0 \in \mathcal{R}\setminus \mathcal{K}$ and $\zeta \in N^P_{\mathcal{R}(\mathcal{T}(x_0))}(x_0)$. One has $h(x_0,\zeta)\le 0$.
 \end{Lemma}
 \begin{proof}
    Since $\zeta \in N^P_{\mathcal{R}(\mathcal{T}(x_0))}(x_0)$, there exists $\sigma >0$ such that
    \begin{equation}
    \label{Eq35}
    \langle \zeta, y-x_0\rangle \le \sigma |y-x_0|^2,
    \end{equation}
  for all $y \in \mathcal{R}(\mathcal{T}(x_0))$.
  \\Let $x(\cdot)$ be an optimal trajectory for $x_0$. Then $x(t) \in \mathcal{R}(\mathcal{T}(x_0))$ for all $t\in [0,\mathcal{T}(x_0)]$. Let $y(\cdot)$ be the measurable function which is the projection of $\dot{x}(\cdot)$ on $F(x_0)$ restricted to $[0,\mathcal{T}(x_0)]$. By Gronwall's Lemma and by the Lipschitzianity of $F$, we have
  $$|\dot{x}(t) - y(t)| \le L|x(t) - x_0| \le LMt,\,\,\,\,\,\text{for a.e.}\, t\in [0,\mathcal{T}(x_0)].$$
  For $t\in (0,\mathcal{T}(x))$, taking $y: = x(t)$ in (\ref{Eq35}), we have
  $$\langle \zeta, x(t) - x_0\rangle \le \sigma |x(t) - x_0|^2,$$
  or, equivalently,
  $$\langle \zeta, \int_0^t \dot{x}(s)ds \rangle \le \sigma Mt^2.$$
  We have, for $t\in (0,\mathcal{T}(x_0))$,
  \begin{eqnarray*}
  h(x_0,\zeta)t &\le& \int_0^t \langle \zeta, y(s)\rangle ds\le \sigma M t^2 + \int_0^t \langle \zeta, y(s) - \dot{x}(s)\rangle ds \\
  &\le& \sigma Mt^2 + ML|\zeta|\int_0^t tds \le \sigma Mt^2 + ML|\zeta|t^2
  \end{eqnarray*}
  This implies that $h(x_0,\zeta) \le 0$. The proof is complete.
 \end{proof}
 
 We are now ready to prove Theorem \ref{THM1}.
 
 \noindent \textit{Proof of Theorem \ref{THM1}}. (a) Let $\zeta \in \partial ^\infty \mathcal{T}(x_0)$. Then $(\zeta,0) \in N^P_{\mathrm{epi}(\mathcal{T})}(x_0,\mathcal{T}(x_0)$. Thus there exist $\sigma >0, \eta >0$ such that
 \begin{equation}
 \label{Eq32}
 \langle \zeta, y - x_0\rangle \le \sigma \left(|y-x_0|^2 + \beta^2\right),
 \end{equation}
 for all $y \in B(x_0,\eta)$ and $\beta \ge \mathcal{T}(y)$.
\\Taking $y\in B(x_0,\eta) \cap \mathcal{K}$ and $\beta = \mathcal{T}(y) = 0$ in (\ref{Eq32}), we have
$$\langle \zeta,y-x_0\rangle \le \sigma |y-x_0|^2.$$
It follows that $\zeta \in N_{\mathcal{K}}^P (x_0)$. \\
We are now going to show that $h(x_0,\zeta) \ge 0$. Let $w\in F(x_0)$ be such that
$$\langle \zeta,w\rangle = h(x_0,\zeta) = \min_{v\in F(x_0)} \langle \zeta,v\rangle.$$
By Theorem \ref{C1}, there exists a $C^1$ trajectory $y(\cdot)$ on $[0,T]$, for some $T>0$, of $-F$ satisfying $y(0) = x_0$ and $\dot{y}(0) = -w$. By Gronwall's Lemma, there is some constant $M>0$ such that $|y(t) - x_0| \le Mt$ for all $t \in [0,T]$. \\
There are two possible cases.\\
\textbf{Case 1}. There exists $\varepsilon >0$ such that $y(t) \in \mathcal{K}\cap B(x_0,\eta)$ for all $t\in [0,\varepsilon]$. For $t\in (0,\varepsilon)$, taking $y:=y(t)$ and $\beta := \mathcal{T}(y(t)) = 0$ in (\ref{Eq32}), we have
$$\langle \zeta,y(t)-x_0\rangle \le \sigma |y(t) - x_0|^2 \le \sigma Mt^2,$$
or, equivalently,
$$\left\langle \zeta,\frac{y(t)-x_0}{t} \right\rangle\le \sigma Mt.$$
Letting $t\to 0+$ in the latter inequality and using the fact that $y(\cdot)$ is of class $C^1$ with $\dot{y}(0) = -w$, we get $\langle\zeta,-w\rangle \le 0$. Therefore, $h(x_0,\zeta) = \langle\zeta,w\rangle \ge 0$.\\
\textbf{Case 2}. There exists $\varepsilon >0$ such that $y(t)\not\in \mathcal{K}$ for all $t\in (0,\varepsilon]$. Fix $t\in (0,\varepsilon)$ such that $y(s)\in B(x_0,\eta)$ for all $s\in [0,t]$. Set $x(s) = y(t-s), s\in [0,t]$. Then $x(\cdot)$ is a trajectory of $F$ with $x(t) = x_0$. By the principle of optimality, we have 
$$\mathcal{T}(y(t)) = \mathcal{T}(x_0)\le t.$$
Taking $y: = y(t), \beta:= t\ge \mathcal{T}(y(t))$ in (\ref{Eq32}), we have
$$\langle \zeta, y(t) - x_0\rangle \le \sigma \left(|y(t)-x_0|^2 +t^2\right) \le \sigma(M+1)t^2.$$
or, equivalently,
$$\left\langle \zeta, \frac{y(t)-x_0}{t}\right\rangle \le \sigma(M+1)t.$$
Letting $t\to 0+$ in the latter inequality and using the fact that $y(\cdot)$ is of class $C^1$ with $\dot{y}(0) = -w$, we get $\langle\zeta,-w\rangle \le 0$. Therefore,  $h(x_0,\zeta) \ge 0$. 

Now let $\zeta \in N_{\mathcal{K}}^P(x_0)$ be such that $h(x_0,\zeta) \ge 0$. We are going to show that $\zeta \in \partial ^\infty \mathcal{T}(x_0)$, i.e., there is some $\sigma >0$ such that
$$\langle \zeta, y-x_0\rangle \le \sigma \left(|y-x_0|^2 + \beta^2\right),$$
for all $(y,\beta) \in \mathrm{epi}(\mathcal{T}), y \in \mathrm{dom}(\mathcal{T})$.

Let $y \in \mathrm{dom}(\mathcal{T})$ be arbitrary. Set $T:= \mathcal{T}(y)$. Let $x(\cdot)$ be an optimal trajectory for $y$. Set $x_1 = x(T)$. Then $x_1\in \mathcal{K}$. By Gronwall's Lemma, we have, for each $t\in [0,T]$,
$$|x(t) - x_0| \le |x(t)-y| + |y-x_0| \le Mt + |y-x_0|.$$
Since $x_1 \in \mathcal{K}, \zeta \in N_{\mathcal{K}}^P(x_0)$, there is some $\sigma_1>0$ such that
\begin{equation}
\label{Eq33}
\langle \zeta,x_1-x_0\rangle \le \sigma_1 |x_1-x_0|^2 \le \sigma_1\left(MT + |y-x_0|\right)^2.
\end{equation}
Let $y(\cdot)$ be a measurable function which is the projection of $\dot{x}(\cdot)$ on the set $F(x_0)$ restricted to $[0,T]$, i.e., for all most $t\in [0,T]$,
$$y(t) = \mathrm{proj}_{F(x_0)}(\dot{x}(t)) \in F(x_0).$$
Since $F$ is locally Lipschitz,
\begin{equation}
\label{Eq34}
|y(t) - \dot{x}(t)| \le L|x(0) - x(t)| \le LMT + L|y -x_0|,\,\,\,\,\,\text{a.e.}\,t\in [0,T].
\end{equation}
Using (\ref{Eq33}) and (\ref{Eq34}), we have the following estimate
\begin{eqnarray*}
\langle \zeta, y-x_0\rangle &=& \langle\zeta,y-x_1\rangle + \langle\zeta,x_1 - x_0\rangle\\
&\le& -\langle \zeta, \int_0^T \dot{x}(t)dt \rangle + \sigma_1(MT + |y-x_0|)^2\\
&=& -\int_0^T \langle \zeta,y(t)\rangle dt + \int_0^T \langle \zeta, y(t)-\dot{x}(t)\rangle dt +  \sigma_1(MT + |y-x_0|)^2\\
&\le& -h(x_0,\zeta)T + \int_0^T  |\zeta| |y(t)-\dot{x}(t)| dt +  \sigma_1(MT + |y-x_0|)^2\\
&\le& |\zeta|\left(LMT^2 + L|y-x_0|T\right) + \sigma_1(MT + |y-x_0|)^2\\
&\le& \sigma \left(|y-x_0|^2 + T^2\right),\,\,\,\,\,\text{for some}\,\,\sigma >0\\
\end{eqnarray*}
Therefore $\langle  \zeta, y-x_0\rangle \le  \sigma \left(|y-x_0|^2 + \beta^2\right)$, for all $\beta \ge T = \mathcal{T}(y)$. The conclusion is $\zeta \in \partial^\infty \mathcal{T}(x_0)$, and ends the proof of part (a).
  
  (b) Let $\zeta \in \partial^\infty \mathcal{T}(x_0)$. Then there exists $\sigma >0$ such that
  \begin{equation}
  \label{Eq36}
  \langle \zeta, y-x_0\rangle \le \sigma \left( |y-x_0|^2 + |\beta - \mathcal{T}(x_0)|^2\right),
  \end{equation}
  for all $(y,\beta) \in \mathrm{epi}(\mathcal{T})$.\\
  From (\ref{Eq36}), one has
  $$\langle \zeta, y -x_0\rangle \le \sigma |y-x_0|^2,$$
  for all $y\in \mathcal{R}(\mathcal{T}(x_0))$, i.e., $\zeta \in N^P_{\mathcal{R}(\mathcal{T}(x_0))}(x_0)$.
  \\It follows from Lemma \ref{THM2} that $h(x_0,\zeta) \le 0$. We are going to show that $h(x_0,\zeta) \ge 0$. Let $w \in F(x_0)$ be such that 
  $$\langle w,\zeta\rangle = h(x_0,\zeta) = \min_{v\in F(x_0)} \langle v,\zeta\rangle.$$
  There exists a $C^1$ trajectory $x(\cdot)$ of $-F$ on $[0,T]$ for some $T>0$ such that $x(0) = x_0$ and $\dot{x}(0) = -w$. Since $x_0 \not\in \mathcal{K}$, there exists $\varepsilon >0$ such that $x(t) \not\in\mathcal{K}$ for all $t\in [0,\varepsilon]$. Fix $t\in (0,\varepsilon)$. For $s\in [0,t]$, we define $y(s) = x(t-s)$. Then $y(\cdot)$ is a trajectory of $F$. By the principle of optimality, we have
  $$\mathcal{T}(x(t)) = \mathcal{T}(y(0)) \le \mathcal{T}(y(t)) + t = \mathcal{T}(x_0) + t.$$
  Taking $y: = x(t), \beta: = \mathcal{T}(x_0) + t$ in (\ref{Eq36}), we get
  $$\langle \zeta,x(t) -x_0\rangle \le \sigma \left(|x(t) - x_0|^2 + t^2\right) \le \sigma (M+1)t^2,$$
  or, equivalently,
  $$\left\langle \zeta,\frac{x(t)-x(0)}{t}\right\rangle \le \sigma (M+1)t.$$
  Letting $t\to 0+$ in the both sides of the latter inequality, we obtain $\langle \zeta,-w\rangle = \langle \zeta,\dot{x}(0)\rangle \le 0$. Hence $h(x_0,\zeta) \ge 0$.
 
 Now let $\zeta \in N^P_{\mathcal{R}(\mathcal{T}(x_0))}(x_0)$ with $h(x_0,\zeta) = 0$. We will show that $\zeta \in \partial^\infty \mathcal{T}(x_0)$, i.e.,  there exists a constant $\sigma >0$ such that
 \begin{equation}
  \label{Eq37}
  \langle \zeta, y-x_0\rangle \le \sigma \left( |y-x_0|^2 + |\beta - \mathcal{T}(x_0)|^2\right),
  \end{equation}
  for all $(y,\beta) \in \mathrm{epi}(\mathcal{T})$.
  
  Let $y\in \mathrm{dom}(\mathcal{T})$ be arbitrary. We have two possible cases
  
  \textbf{Case 1}.  $\mathcal{T}(y) \le \mathcal{T}(x_0)$. Then $y\in \mathcal{R}(\mathcal{T}(x_0))$. Since $\zeta \in N^P_{\mathcal{R}(\mathcal{T}(x_0))}(x_0)$, there exists $\sigma_1 >0$ such that
  $$\langle\zeta, y-x_0\rangle \le \sigma_1 |y-x_0|^2,$$
  i.e., (\ref{Eq37}) holds for $\sigma = \sigma_1$.
  
 \textbf{ Case 2}. $\mathcal{T}(y) > \mathcal{T}(x_0)$. Let $y(\cdot)$ be an optimal trajectory for $y$. Then by the principle of optimality,
 $$\mathcal{T}(y) = \mathcal{T}(y(t)) + t\,\,\,\text{for all}\,\,t\in [0,\mathcal{T}(y)].$$
 Set $r = \mathcal{T}(y) - \mathcal{T}(x_0)$ and $x_1 = y(r)$. Then $\mathcal{T}(x_1) = \mathcal{T}(x_0)$ and thus $x_1 \in \mathcal{R}(\mathcal{T}(x_0))$. There is some $\sigma_1 >0$ such that
  $$\langle\zeta, y-x_0\rangle \le \sigma_1 |y-x_0|^2.$$
 By Gronwall's Lemma, we have, for $t\in [0,\mathcal{T}(y)]$,
 $$|y(t) -x_0| \le |y(t) -y| + |y-x_0| \le Mt + |y-x_0|.$$
 In particular, $|x_1 - x_0| \le Mr + |y - x_0|$.
 
 Let $z(\cdot)$ be the measurable function which is the projection of $\dot{y}(\cdot)$ on $F(x_0)$ restricted to $[0,\mathcal{T}(y)]$, i.e.,
 $$z(t) = \mathrm{proj}_{F(x_0)} \dot{y}(t),\,\,\,\,\text{for all most}\,\,t\in [0,\mathcal{T}(y)].$$
 By the Lipschitz continuity of $F$,
 \begin{equation}
 \label{Eq38}
 |\dot{y}(t) - z(t)| \le L|y(t) - x_0| \le LMt+ L|y-x_0|,\,\,\,\,\text{for all most}\,\,t\in [0,\mathcal{T}(y)].
 \end{equation}
 We have the estimate
 \begin{eqnarray*}
 \langle \zeta, y -x_0\rangle &=& \langle \zeta, y-x_1\rangle + \langle \zeta, x_1 - x_0\rangle \\
 &\le & - \langle\zeta,\int_0^r \dot{y}(t)dt\rangle + \sigma_1|x_1 - x_0|^2\\
 &=& - \langle \zeta,\int_0^rz(t)dt \rangle + \langle \zeta,\int_0^r (z(t) - \dot{y}(t))dt \rangle +  \sigma_1|x_1 - x_0|^2\\
 &\le& -h(x_0,\zeta)r + |\zeta|\int_0^r |z(t)-\dot{y}(t)|dt + \sigma_1|x_1 - x_0|^2\\
 &\le& |\zeta|\int_0^r(LMr +L|y-x_0|)dt + \sigma_1|x_1 - x_0|^2\\
 &\le& L|\zeta| (Mr^2 + |y-x_0|r) + \sigma_1(Mr + |y-x_0|)^2\\
 &\le& \sigma (|y-x_0|^2 + r^2) = \sigma (|y-x_0|^2 + |\mathcal{T}(y) - \mathcal{T}(x_0)|^2\\
 &\le& \sigma (|y-x_0|^2 + |\beta - \mathcal{T}(x_0)|^2
 \end{eqnarray*}
 for some $\sigma >0$ and for all $\beta \ge \mathcal{T}(y) >\mathcal{T}(x_0)$. This ends the proof. \qed
 
  \begin{Corollary}
  \label{Co1} Assume (F). We have 
  $$  N^P_{\mathcal{R}(\mathcal{T}(x))}(x)=\R_+ \partial^P \mathcal{T}(x) \cup \partial^\infty \mathcal{T}(x),$$
  for $x\in \mathcal{R}\setminus \mathcal{K}$. 
  \end{Corollary}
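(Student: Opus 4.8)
The plan is to fix $r:=\mathcal{T}(x)>0$ (legitimate since $x\in\mathcal{R}\setminus\mathcal{K}$) and to assemble the identity from three ingredients already at hand: the characterization $\partial^P\mathcal{T}(x)=N^P_{\mathcal{R}(r)}(x)\cap\{\zeta:h(x,\zeta)=-1\}$ of Theorem \ref{WY}(b), the characterization $\partial^\infty\mathcal{T}(x)=N^P_{\mathcal{R}(r)}(x)\cap\{\zeta:h(x,\zeta)=0\}$ of Theorem \ref{THM3}, and the one-sided bound $h(x,\zeta)\le 0$ for $\zeta\in N^P_{\mathcal{R}(r)}(x)$ of Theorem \ref{THM2}. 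The only two structural facts I need beyond these are that $N^P_{\mathcal{R}(r)}(x)$ is a cone, and that $\zeta\mapsto h(x,\zeta)$ is positively homogeneous of degree one, which is immediate from the definition since $h(x,\lambda\zeta)=\min_{v\in F(x)}\langle v,\lambda\zeta\rangle=\lambda\,h(x,\zeta)$ for every $\lambda\ge 0$.

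For the inclusion $\supseteq$ I would argue directly: Theorem \ref{WY}(b) gives $\partial^P\mathcal{T}(x)\subset N^P_{\mathcal{R}(r)}(x)$, and because the right-hand side is a cone this upgrades to $\R_+\partial^P\mathcal{T}(x)\subset N^P_{\mathcal{R}(r)}(x)$; Theorem \ref{THM3} gives $\partial^\infty\mathcal{T}(x)\subset N^P_{\mathcal{R}(r)}(x)$ as well, so taking the union yields the containment. For the reverse inclusion $\subseteq$, I would take an arbitrary $\zeta\in N^P_{\mathcal{R}(r)}(x)$; Theorem \ref{THM2} forces $h(x,\zeta)\le 0$, and I split according to the sign. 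If $h(x,\zeta)=0$, then $\zeta\in N^P_{\mathcal{R}(r)}(x)\cap\{h(x,\cdot)=0\}=\partial^\infty\mathcal{T}(x)$ by Theorem \ref{THM3} (this case also absorbs $\zeta=0$). If $h(x,\zeta)<0$, I set $\lambda:=-1/h(x,\zeta)>0$; positive homogeneity gives $h(x,\lambda\zeta)=-1$, and since $N^P_{\mathcal{R}(r)}(x)$ is a cone we still have $\lambda\zeta\in N^P_{\mathcal{R}(r)}(x)$, whence $\lambda\zeta\in N^P_{\mathcal{R}(r)}(x)\cap\{h(x,\cdot)=-1\}=\partial^P\mathcal{T}(x)$ by Theorem \ref{WY}(b). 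Rescaling back, $\zeta=\lambda^{-1}(\lambda\zeta)\in\R_+\partial^P\mathcal{T}(x)$, which completes the case analysis.

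I do not expect a genuine obstacle here; once the sign trichotomy $h(x,\zeta)\le 0$ is available, with equality detecting precisely the horizontal part, the statement is essentially a bookkeeping consequence of the three preceding results. The only points that merit a word of care are the degenerate element $\zeta=0$, which I deliberately file under the $h=0$ case so that it lands in $\partial^\infty\mathcal{T}(x)$, and the normalization step, where it is essential that the proximal normal set of $\mathcal{R}(r)$ is a genuine cone (so that the rescaling $\zeta\mapsto\lambda\zeta$ stays inside it) rather than merely a convex set.
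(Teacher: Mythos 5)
Your proof is correct and matches the paper's own argument essentially verbatim: both use Theorem \ref{THM2} to obtain $h(x,\zeta)\le 0$, handle the case $h(x,\zeta)=0$ via Theorem \ref{THM3}, and in the case $h(x,\zeta)<0$ rescale by $-1/h(x,\zeta)$ (your $\lambda\zeta$ is exactly the paper's $\eta=-\zeta/h(x,\zeta)$) to land in $\partial^P\mathcal{T}(x)$ via Theorem \ref{WY}(b), with the reverse inclusion following from the same theorems and the cone property. Your explicit handling of $\zeta=0$ and of the positive homogeneity of $h(x,\cdot)$ are small points of care that the paper leaves implicit.
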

  \begin{proof}
  Let $\zeta \in N^P_{\mathcal{R}(\mathcal{T}(x))}(x)$. Then by Lemma \ref{THM2}, we have $h(x,\zeta) \le 0$. If $h(x,\zeta) = 0$, then by Theorem \ref{THM1}, $\zeta \in \partial^\infty \mathcal{T}(x)$. If $h(x,\zeta) <0$, then we set $\eta = -\zeta/h(x,\zeta)$. Observe that $\eta \in N^P_{\mathcal{R}(\mathcal{T}(x))}(x)$ and $h(x,\eta) = -1$. It follows from Theorem \ref{WY} that $\eta \in \partial^P\mathcal{T}(x)$. Thus $\zeta = -h(x,\zeta)\eta \in \R_+ \partial^P\mathcal{T}(x)$. Therefore $  N^P_{\mathcal{R}(\mathcal{T}(x))}(x)\subset \R_+ \partial^P \mathcal{T}(x) \cup \partial^\infty \mathcal{T}(x).$ The oposite inclusion follows easily from Theorem \ref{THM1}, Theorem \ref{WY} and the definition of a cone.
  \end{proof}
  The second main result of this section is a connection between normal cones to sublevel sets and to the epigraph of the minimum time function. This contains generalizations of the results in \cite{CKL, LTS}. 
  \begin{Theorem}
  \label{THM4}
  Assume  (F). Let $x\in \mathcal{R}\setminus \mathcal{K}$. 
  \begin{itemize}
  \item[(i)] if $\zeta \in N^P_{\mathcal{R}(\mathcal{T}(x))}(x)$, then $(\zeta,h(x,\zeta)) \in N^P_{\mathrm{epi}(\mathcal{T})}(x,\mathcal{T}(x))$.
  \item[(ii)] if $\zeta \in \R^n$ and $\alpha \in \R$ satisfy $(\zeta,\alpha) \in N^P_{\mathrm{epi}(\mathcal{T})}(x,\mathcal{T}(x))$, then $\alpha \le 0$, $\zeta \in N^P_{\mathcal{R}(\mathcal{T}(x))}(x)$ and $h(x,\zeta) = \alpha$.
  \end{itemize}
  \end{Theorem}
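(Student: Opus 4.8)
The plan is to reduce both statements to the already-established characterizations of the two subdifferentials, namely Theorem \ref{WY}(b) for $\partial^P\mathcal{T}$ and Theorem \ref{THM3} for $\partial^\infty\mathcal{T}$, by exploiting two elementary structural facts: that proximal normal cones are cones, and that $h(x,\cdot)$ is positively homogeneous, $h(x,\lambda\zeta)=\lambda\,h(x,\zeta)$ for $\lambda\ge 0$. No new trajectory estimates are needed, since all the analytic work has been carried out in Theorems \ref{THM2} and \ref{THM3}; the argument runs parallel to the proof of Corollary \ref{Co1}.

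For part (i) I would first invoke Theorem \ref{THM2} to obtain $h(x,\zeta)\le 0$, and then split according to its sign. If $h(x,\zeta)=0$, then Theorem \ref{THM3} gives directly $\zeta\in\partial^\infty\mathcal{T}(x)$, that is $(\zeta,0)=(\zeta,h(x,\zeta))\in N^P_{\mathrm{epi}(\mathcal{T})}(x,\mathcal{T}(x))$. If $h(x,\zeta)<0$, set $\eta=-\zeta/h(x,\zeta)$; since the normal cone is a cone and $-1/h(x,\zeta)>0$, one still has $\eta\in N^P_{\mathcal{R}(\mathcal{T}(x))}(x)$, while homogeneity yields $h(x,\eta)=-1$. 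Theorem \ref{WY}(b), applicable because $x\notin\mathcal{K}$ forces $\mathcal{T}(x)>0$, then gives $\eta\in\partial^P\mathcal{T}(x)$, i.e.\ $(\eta,-1)\in N^P_{\mathrm{epi}(\mathcal{T})}(x,\mathcal{T}(x))$. Scaling this normal by the positive factor $-h(x,\zeta)$ and using once more that the normal cone is a cone produces $(-h(x,\zeta))(\eta,-1)=(\zeta,h(x,\zeta))\in N^P_{\mathrm{epi}(\mathcal{T})}(x,\mathcal{T}(x))$, which is the desired conclusion.

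For part (ii) I would first show $\alpha\le 0$ by testing the defining proximal inequality for $(\zeta,\alpha)$ at the epigraph points $(x,\beta)$ with $\beta\ge\mathcal{T}(x)$: this gives $\alpha(\beta-\mathcal{T}(x))\le\sigma(\beta-\mathcal{T}(x))^2$, and dividing by $\beta-\mathcal{T}(x)>0$ and letting $\beta\downarrow\mathcal{T}(x)$ yields $\alpha\le 0$. I then split on the sign of $\alpha$. If $\alpha=0$, then $(\zeta,0)\in N^P_{\mathrm{epi}(\mathcal{T})}(x,\mathcal{T}(x))$ means $\zeta\in\partial^\infty\mathcal{T}(x)$, and Theorem \ref{THM3} delivers both remaining conclusions at once, $\zeta\in N^P_{\mathcal{R}(\mathcal{T}(x))}(x)$ and $h(x,\zeta)=0=\alpha$. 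If $\alpha<0$, dividing the normal by $-\alpha>0$ gives $(\zeta/(-\alpha),-1)\in N^P_{\mathrm{epi}(\mathcal{T})}(x,\mathcal{T}(x))$, i.e.\ $\zeta/(-\alpha)\in\partial^P\mathcal{T}(x)$; Theorem \ref{WY}(b) then yields $\zeta/(-\alpha)\in N^P_{\mathcal{R}(\mathcal{T}(x))}(x)$, hence $\zeta\in N^P_{\mathcal{R}(\mathcal{T}(x))}(x)$ by the cone property, together with $h(x,\zeta/(-\alpha))=-1$, which by homogeneity is exactly $h(x,\zeta)=\alpha$.

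The only step not handed to us verbatim by the earlier results is extracting $\alpha\le 0$ and then recognizing that a single normalization by $|\alpha|$ in (ii), respectively by $|h(x,\zeta)|$ in (i), converts the problem into one of the two already-solved regimes $\{h=-1\}$ and $\{h=0\}$. Once the sign of the vertical component is pinned down, the positive homogeneity of $h$ guarantees that the scaling is consistent, so the two cases exhaust all possibilities and glue together without a gap. I therefore expect no serious obstacle beyond bookkeeping: the heavy lifting, namely the trajectory-based estimates establishing $h(x,\zeta)\le 0$ and the two subdifferential characterizations, is already complete.
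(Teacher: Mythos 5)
Your proposal is correct and follows essentially the same route as the paper's own proof: the identical case split on the sign of $h(x,\zeta)$ in (i) and of $\alpha$ in (ii), the same normalization $\eta=-\zeta/h(x,\zeta)$ (resp.\ $\zeta/(-\alpha)$) exploiting the cone property and positive homogeneity of $h(x,\cdot)$, and the same reduction to Theorem \ref{THM3} for the $\{h=0\}$ regime and Theorem \ref{WY}(b) for the $\{h=-1\}$ regime. Your explicit verification that $\alpha\le 0$ (which the paper dismisses as ``the nature of an epigraph'') and your remark that $x\notin\mathcal{K}$ guarantees $\mathcal{T}(x)>0$ so that Theorem \ref{WY}(b) applies are welcome clarifications, not deviations.
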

  \begin{proof}
  (i) Since $\zeta \in N^P_{\mathcal{R}(\mathcal{T}(x))}(x)$, it follows from Lemma \ref{THM2} that $h(x,\zeta) \le 0$. There are two possible cases
  
  (a) Case 1: $h(x,\zeta) = 0$. Then by Theorem \ref{THM1} $\zeta \in \partial^\infty \mathcal{T}(x)$, i.e., $(\zeta,h(x,\zeta)) = (\zeta,0) \in N^P_{\mathrm{epi}(\mathcal{T})}(x,\mathcal{T}(x))$.
  
  (b) Case 2: $h(x,\zeta) <0$. Set $\zeta_1 = -\frac{\zeta}{h(x,\zeta)}$. Observe that $\zeta_1 \in N^P_{\mathcal{R}(\mathcal{T}(x))}(x)$ and $h(x,\zeta_1) = -1$. It follows from Theorem \ref{WY} that $\zeta_1 \in \partial^P\mathcal{T}(x)$, i.e., $(\zeta_1,-1) \in N^P_{\mathrm{epi}(\mathcal{T})}(x,\mathcal{T}(x))$. Thus $(\zeta,h(x,\zeta))  = -h(x,\zeta) (\zeta_1,-1) \in N^P_{\mathrm{epi}(\mathcal{T})}(x,\mathcal{T}(x))$.
  
  (ii) Since $(\zeta,\alpha)  \in N^P_{\mathrm{epi}(\mathcal{T})}(x,\mathcal{T}(x))$, by the nature of an epigraph, one has $\alpha \le 0$. We also have two possible cases
  
  (a) Case 1: $\alpha = 0$. Then $(\zeta,0) \in  N^P_{\mathrm{epi}(\mathcal{T})}(x,\mathcal{T}(x))$, i.e., $\zeta \in \partial^\infty \mathcal{T}(x)$. Thanks to Theorem \ref{THM1}, $\zeta \in N^P_{\mathcal{R}(\mathcal{T}(x))}(x)$ and $h(x,\zeta) = 0 = \alpha$.
  
  (b) Case 2: $\alpha <0$. Set $\zeta_1 =- \frac{\zeta}{\alpha}$. Then $(\zeta_1,-1) = -\frac{1}{\alpha}(\zeta,\alpha) \in N^P_{\mathrm{epi}(\mathcal{T})}(x,\mathcal{T}(x))$, i.e., $(\zeta_1 \in \partial^P \mathcal{T}(x)$. It follows from Theorem \ref{WY} that $\zeta_1 \in N^P_{\mathcal{R}(\mathcal{T}(x))}(x)$ and $h(x,\zeta_1) = -1$. Therefore $\zeta  = - \alpha \zeta_1 \in N^P_{\mathcal{R}(\mathcal{T}(x))}(x)$ and $h(x,\zeta) = -\alpha h(x,\zeta_1) = \alpha$.
  \end{proof}
 \begin{Remark} We note that the statements (i) and (ii) in Theorem \ref{THM4} were proved in \cite{CKL,LTS} for the case $F$ given in the form 
 $$F(x) = \{f(x,u): u\in U\}, \qquad x\in \R^n,$$
  with $f:\R^n \times U \to \R^n$ of class $C^{1,1}$ with respect to the first argument, under very strong assumptions, namely there exists a neighborhood $\mathcal{W}$ of $x$ such that
  \begin{itemize}
  \item[(1)] $\mathcal{T}$ is finite and continuous in $\mathcal{W}$;
  \item[(2)] every $y\in \mathcal{W}$ is an optimal point, i.e., there exists an optimal trajectory which passes through $y$.
  \item[(3)] for every $y\in \mathcal{W}$, the optimal control steering $y$ to the target $\mathcal{K}=\{0\}$ is unique and bang - bang with finitely many switching,
  \item[(4)] there exist $r>0$  and a continuous function $\varphi$ such that $\mathcal{R}(t)$ is $\varphi$ - convex for all $t< r$.
  \end{itemize}
 Moreover, the proofs in \cite{CKL,LTS} are based on Maximum Principle which is not used anywhere in this section.
 \end{Remark}
\begin{Lemma}\label{Lem1} Assume (F). Let $x\in \mathcal{R}\setminus \mathcal{K}$. One has
$$N^P_{\mathcal{R}(\mathcal{T}(x))}(x) = \{0\}\,\,\,\text{if and only if}\,\,\,N^P_{\mathrm{epi}(\mathcal{T})}(x,\mathcal{T}(x)) = \{0\}.$$
\end{Lemma}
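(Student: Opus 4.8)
The plan is to read this equivalence off directly from the normal-cone correspondence in Theorem \ref{THM4}, which already packages the precise relationship between $N^P_{\mathcal{R}(\mathcal{T}(x))}(x)$ and $N^P_{\mathrm{epi}(\mathcal{T})}(x,\mathcal{T}(x))$. Since both are cones and hence automatically contain the origin, the assertion amounts to showing that one collapses to $\{0\}$ exactly when the other does, and each direction is a one-line application of the two halves of Theorem \ref{THM4}.

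For the implication ``$N^P_{\mathrm{epi}(\mathcal{T})}(x,\mathcal{T}(x)) = \{0\}$ forces $N^P_{\mathcal{R}(\mathcal{T}(x))}(x) = \{0\}$'', I would take an arbitrary $\zeta \in N^P_{\mathcal{R}(\mathcal{T}(x))}(x)$ and apply Theorem \ref{THM4}(i) to obtain $(\zeta, h(x,\zeta)) \in N^P_{\mathrm{epi}(\mathcal{T})}(x,\mathcal{T}(x)) = \{0\}$. Reading off the first coordinate gives $\zeta = 0$, so the sublevel normal cone is trivial. For the converse, assume $N^P_{\mathcal{R}(\mathcal{T}(x))}(x) = \{0\}$ and let $(\zeta,\alpha) \in N^P_{\mathrm{epi}(\mathcal{T})}(x,\mathcal{T}(x))$ be arbitrary. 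Theorem \ref{THM4}(ii) yields $\zeta \in N^P_{\mathcal{R}(\mathcal{T}(x))}(x) = \{0\}$ together with $h(x,\zeta) = \alpha$; hence $\zeta = 0$, and since $h(x,0) = \min_{v\in F(x)}\langle v,0\rangle = 0$ by the definition \eqref{DefMH} of the minimized Hamiltonian, we get $\alpha = 0$ and therefore $(\zeta,\alpha) = (0,0)$.

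I do not expect any genuine obstacle here: the substantive work has been done in establishing Theorem \ref{THM4}, and the only additional ingredient is the elementary identity $h(x,0) = 0$, which is exactly what closes the converse direction by pinning the $\alpha$-component to zero once $\zeta$ is forced to vanish. The lemma is thus essentially a reformulation of Theorem \ref{THM4} recording that the two normal cones are simultaneously trivial, a fact that will later let one treat singular (horizontal) and regular points of $\mathcal{T}$ on an equal footing when comparing dimensions of the two cones.
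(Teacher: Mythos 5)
Your proposal is correct and follows essentially the same route as the paper: both directions are immediate applications of Theorem \ref{THM4}(i) and (ii), combined with the observation that $h(x,0)=0$ pins down the $\alpha$-component once $\zeta$ vanishes. The only cosmetic difference is that the paper phrases one direction as a proof by contradiction, whereas you argue directly; the content is identical.
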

\begin{proof}
Suppose $N^P_{\mathcal{R}(\mathcal{T}(x))}(x) = \{0\}$. We will show that $N^P_{\mathrm{epi}(\mathcal{T})}(x,\mathcal{T}(x)) = \{0\}$. Assume, to the contrary, that $N^P_{\mathrm{epi}(\mathcal{T})}(x,\mathcal{T}(x)) \ne \{0\}$. Let $\zeta \in \R^n, \alpha \in \R$ be such that $(\zeta,\alpha) \in N^P_{\mathrm{epi}(\mathcal{T})}(x,\mathcal{T}(x))$ and $(\zeta,\alpha) \ne (0,0)$. From Theorem \ref{THM4}, we have $\zeta \in N^P_{\mathcal{R}(\mathcal{T}(x))}(x)$ and $h(x,\zeta) = \alpha$. Since $N^P_{\mathcal{R}(\mathcal{T}(x))}(x) = \{0\}$, we get $\zeta = 0$ and then $\alpha = h(x,\zeta) = 0$. This contradicts to $(\zeta,\alpha)\ne (0,0)$.

We now assume that $N^P_{\mathrm{epi}(\mathcal{T})}(x,\mathcal{T}(x)) = \{0\}$. Let $\zeta \in N^P_{\mathcal{R}(\mathcal{T}(x))}(x)$. Again from Theorem \ref{THM4}, one has $(\zeta,h(x,\zeta)) \in N^P_{\mathrm{epi}(\mathcal{T})}(x,\mathcal{T}(x)) = \{0\}$. This implies $\zeta = 0$. Therefore $N^P_{\mathcal{R}(\mathcal{T}(x))}(x) = \{0\}$.
\end{proof}
By using above results in the next theorem we will show that the normal cones to the sublevel set and to the epigraph of $\mathcal{T}$ have the same dimension. This is a special feature of the minimum time function which is not shared for general functions, not even for convex functions. This feature was proved in \cite{GCLN} in the case of normal linear control systems. We note that the proof in \cite{GCLN} is based on, among other things, an explicit representation of the minimized Hamiltonian for  linear control systems. Of course, we cannot compute explicitly the minimized Hamiltonian for more general setting, e.g, nonlinear setting and assuming merely assumption (F). 
\begin{Theorem}
\label{THM5}
Assume  (F). For any $x\in \mathcal{R}\setminus \mathcal{K}$, we have
\begin{equation}
\label{Eq39}
\dim N^P_{\mathcal{R}(\mathcal{T}(x))}(x) = \dim N^P_{\mathrm{epi}(\mathcal{T})}(x,\mathcal{T}(x)) .
\end{equation}
\end{Theorem}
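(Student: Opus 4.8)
The plan is to read Theorem \ref{THM4} as the statement that the coordinate projection $\pi:\R^n\times\R\to\R^n$, $\pi(\zeta,\alpha)=\zeta$, restricts to a bijection between $D:=N^P_{\mathrm{epi}(\mathcal{T})}(x,\mathcal{T}(x))$ and $C:=N^P_{\mathcal{R}(\mathcal{T}(x))}(x)$, with inverse $\Phi:\zeta\mapsto(\zeta,h(x,\zeta))$. Indeed, part (i) gives $\Phi(C)\subseteq D$, while part (ii) gives $\pi(D)\subseteq C$ together with the identity $\alpha=h(x,\zeta)$ holding on all of $D$; hence $\pi$ and $\Phi$ are mutually inverse and $D=\{(\zeta,h(x,\zeta)):\zeta\in C\}$. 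Since $h(x,\cdot)$ is positively homogeneous, $\Phi(t\zeta)=t\,\Phi(\zeta)$ for $t\ge 0$, so $\Phi$ carries rays of $C$ bijectively onto rays of $D$. The easy inequality $\dim C\le \dim D$ is then immediate: if $\zeta_1,\dots,\zeta_\kappa\in C$ are linearly independent and generate $C$, then $\Phi(\zeta_i)=(\zeta_i,h(x,\zeta_i))\in D$ are again linearly independent (their first components are), so $D$ contains $\kappa$ linearly independent vectors and thus $\dim D\ge\kappa$.

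The whole difficulty is concentrated in the reverse inequality, and it reduces to a single point: whether $h(x,\cdot)$ is \emph{linear} on $C$. Because $h(x,\cdot)=\min_{v\in F(x)}\langle v,\cdot\rangle$ is concave and positively homogeneous, it is superadditive, so in general $h(x,\sum_i\lambda_i\zeta_i)\ge \sum_i\lambda_i h(x,\zeta_i)$ with possibly strict inequality; when the inequality is strict, $\Phi$ fails to preserve nonnegative combinations and the graph $D=\Phi(C)$ is not a convex, hence not a simplicial, cone. Consequently $\dim D$ can be defined, and can equal $\dim C$, only if $h(x,\cdot)$ is actually linear on $C$, and I would prove this linearity as the main lemma. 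One route is purely formal: a graph $\{(\zeta,g(\zeta))\}$ is a convex cone exactly when $g$ is linear, so the hypothesis that $\dim D$ exists already forces $h(x,\cdot)|_C$ to be linear. A self-contained route is to produce a common minimizer: taking an optimal trajectory $x(\cdot)$ for $x$ and setting $v^{*}=\dot x(0)\in F(x)$, I would upgrade the one-sided estimate in the proof of Theorem \ref{THM2} to the equality $h(x,\zeta)=\langle v^{*},\zeta\rangle$ valid for \emph{every} $\zeta\in C$; equivalently, that the single velocity $v^{*}$ minimizes $\langle v,\zeta\rangle$ over $v\in F(x)$ simultaneously for all $\zeta\in C$, i.e. $-C\subseteq N^P_{F(x)}(v^{*})$. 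This is the dual, at the level of velocity sets, of $x$ being a boundary point of $\mathcal{R}(\mathcal{T}(x))$ with normal cone $C$, and it is the step I expect to be the hardest; Theorem \ref{C1} and the principle of optimality supply the $C^1$ trajectories needed to pass to the limit exactly as in Theorems \ref{THM2} and \ref{THM3}.

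Once linearity is in hand I would finish quickly. Writing $h(x,\zeta)=\langle v^{*},\zeta\rangle$ on $C$, define the linear map $L:\R^n\to\R^{n+1}$ by $L\zeta=(\zeta,\langle v^{*},\zeta\rangle)$. Then $L$ is injective (its first component is the identity), hence a linear isomorphism onto its image, and it restricts to $\Phi$ on $C$, so that $L(C)=D$. A linear isomorphism sends a linearly independent generating family of $C$ to a linearly independent generating family of $D$, and its inverse does the reverse, so the two cones admit generating families of the same cardinality; that is, $\dim N^P_{\mathcal{R}(\mathcal{T}(x))}(x)=\dim N^P_{\mathrm{epi}(\mathcal{T})}(x,\mathcal{T}(x))$. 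I would record the degenerate case separately: when either cone is trivial, equality of dimensions (both equal to $0$) follows directly from Lemma \ref{Lem1}. The one delicate point to keep in mind throughout is that the identification $\alpha=h(x,\zeta)$ from Theorem \ref{THM4}(ii) is what makes $\pi$ injective on $D$, and it is precisely the linearity lemma that prevents the extra ``vertical'' direction $(0,1)$ from entering the linear span of $D$ and inflating its dimension.
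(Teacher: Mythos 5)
Your reading of Theorem \ref{THM4} as exhibiting $D:=N^P_{\mathrm{epi}(\mathcal{T})}(x,\mathcal{T}(x))$ as the graph of $h(x,\cdot)$ over $C:=N^P_{\mathcal{R}(\mathcal{T}(x))}(x)$, your easy direction, and your use of Lemma \ref{Lem1} for the degenerate case all match the paper, and you correctly locate the crux in the linearity of $h(x,\cdot)$ on $C$. But neither of your two routes to that lemma is actually valid as written, so the proposal has a gap exactly at the point you call the main lemma. Route (a) rests on the inference ``$\dim D$ exists $\Rightarrow$ $D$ is simplicial $\Rightarrow$ $D$ is convex $\Rightarrow$ $h(x,\cdot)|_C$ is linear'', and the first implication is false under the paper's definition: $\dim C=\kappa$ only requires linearly independent $v_1,\dots,v_\kappa\in C$ such that every element of $C$ is \emph{some} nonnegative combination of them; it does not require $C$ to contain all such combinations, hence does not force convexity. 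Concretely, take $g(\zeta_1,\zeta_2)=\min\{\zeta_1,\zeta_2\}$ (a minimized Hamiltonian, for $F(x)=\mathrm{co}\{(1,0),(0,1)\}$) over the quadrant $\{\zeta_1\ge 0,\ \zeta_2\ge 0\}$: its graph contains the linearly independent vectors $(1,0,0),(0,1,0),(1,1,1)$, and every graph point $(\zeta_1,\zeta_2,\min\{\zeta_1,\zeta_2\})$ is a nonnegative combination of them, so this cone has dimension $3$ in the paper's sense while its projection has dimension $2$ and $g$ is nonlinear. What saves the theorem is a fact you never invoke: proximal normal cones are \emph{convex}, immediately from (\ref{DefProx}), since nonnegative combinations of proximal normals are again proximal normals. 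Convexity of $D$, combined with the identity $\alpha=h(x,\zeta)$ from Theorem \ref{THM4}(ii), is what yields additivity of $h(x,\cdot)$ on $C$. Route (b), which you flag as the hardest step, is left unproved (it can be made to work: take $v^*$ a limit point of $(\bar x(t)-x)/t$ for an optimal trajectory and combine Corollary \ref{Co1} with the proximal inequalities as in Theorems \ref{THM2} and \ref{THM3}, getting $-C\subseteq N^P_{F(x)}(v^*)$), but it is strictly more than the theorem needs.

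Once repaired, your route (a) is in substance the paper's proof, except that the paper never constructs a representing vector $v^*$ or a linear map $L$ (which, from mere additivity of $h(x,\cdot)|_C$, would anyway require an unstated extension to a linear functional on $\mathrm{span}(C)$). Instead it verifies directly that the projection $\pi$ preserves linear independence: given linearly independent $(\eta_1,\alpha_1),\dots,(\eta_\ell,\alpha_\ell)\in D$ with $\sum_i a_i\eta_i=0$, split the indices into $I=\{i: a_i\ge 0\}$ and its complement $J$; convexity of $D$ places $\sum_{i\in I}a_i(\eta_i,\alpha_i)$ and $-\sum_{j\in J}a_j(\eta_j,\alpha_j)$ in $D$, and Theorem \ref{THM4}(ii) forces their last coordinates to equal $h(x,\cdot)$ of their first coordinates, which coincide; hence $\sum_i a_i\alpha_i=0$, so $\sum_i a_i(\eta_i,\alpha_i)=0$, contradicting independence. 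That one contradiction argument is precisely your linearity lemma ``used on the fly'', and together with your easy direction and Lemma \ref{Lem1} it finishes the proof. So: right skeleton, right crux, but the crux must be derived from the convexity of the proximal normal cone, not from the existence of its dimension.
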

\begin{proof}
By Lemma \ref{Lem1}, it is enough to show that (\ref{Eq39}) holds true when $N^P_{\mathcal{R}(\mathcal{T}(x))}(x) \ne \{0\}$ and $ N^P_{\mathrm{epi}(\mathcal{T})}(x,\mathcal{T}(x)) \ne \{0\}$. Assume that $\dim N^P_{\mathcal{R}(\mathcal{T}(x))}(x) = \kappa \ge 1$ and $\dim N^P_{\mathrm{epi}(\mathcal{T})}(x,\mathcal{T}(x)) = \ell\ge 1$. We now assume that $\zeta_1,\cdots,\zeta_\kappa \in  N^P_{\mathcal{R}(\mathcal{T}(x))}(x)$ and they are linearly independent. It follows from Theorem \ref{THM4} that $(\zeta_1,h(x,\zeta_1)),\cdots, (\zeta_\kappa,h(x,\zeta_\kappa)) \in N^P_{\mathrm{epi}(\mathcal{T})}(x,\mathcal{T}(x))$. One can see easily that $(\zeta_1,h(x,\zeta_1)),\cdots, (\zeta_\kappa,h(x,\zeta_\kappa))$ are linearly independent. Thus $\kappa \le \ell$.

Let us now assume that $(\eta_1,\alpha_1),\cdots, (\eta_\ell,\alpha_\ell) \in N^P_{\mathrm{epi}(\mathcal{T})}(x,\mathcal{T}(x))$ are linearly independent. Thanks to Theorem \ref{THM4}, $\eta_1,\cdots,\eta_\ell \in N^P_{\mathcal{R}(\mathcal{T}(x))}(x)$ and $h(x,\eta_i)=\alpha_i$ for all $ i = 1,\cdots,\ell$. Observe that $\eta_i \ne 0$ for all $i=1,\cdots,\ell$. Indeed, if $\eta_i = 0$ for some $i\in \{1,\cdots,\ell\}$ then $\alpha_i =h(x,\eta_i) =0$. Thus  $(\eta_1,\alpha_1),\cdots, (\eta_\ell,\alpha_\ell)$ are not linearly independent.  We are going to show that $\eta_1,\cdots,\eta_\ell$ are linearly independent. Assume, to the contrary, that there exists $a_1,\cdots, a_\ell \in \R$ such that  $a_1^2 + \cdots + a_\ell^2 \ne 0$ and 
\begin{equation}\label{Eq310}
\sum_{i=1}^\ell a_i\eta_i = 0.
\end{equation}
Set
$$ I = \{i: 1\le i \le \ell, a_i \ge 0\},\,\,\,\text{and}\,\,\,J = \{1,\cdots,\ell\} \setminus I.$$

If $I$ and $J$ are both nonempty, then (\ref{Eq310}) implies that
\begin{equation}
\label{Eq311}
\sum_{i\in I} a_i\eta_i = -\sum_{j\in J}a_j\eta_j.
\end{equation}
Since $a_i \ge0$ and $(\eta_i,\alpha) \in N^P_{\mathrm{epi}(\mathcal{T})}(x,\mathcal{T}(x))$ for all $i\in I$, we have 
$$\left(\sum_{i\in I}a_i\eta_i,\sum_{i\in I}a_i\alpha_i\right) = \sum_{i\in I}a_i\left(\eta_i,\alpha_i\right) \in N^P_{\mathrm{epi}(\mathcal{T})}(x,\mathcal{T}(x)).$$
It follows from Theorem \ref{THM4} that
$$h\left(x,\sum_{i\in I}a_i\eta_i\right) =\sum_{i\in I}a_i\alpha_i.$$
Similarly, there holds
$$h\left(x,-\sum_{j\in J}a_i\eta_j\right) =-\sum_{j\in J}a_j\alpha_j.$$
The last two equalities together with (\ref{Eq311}) claim that
$$\sum_{i=1}^\ell a_i \alpha_i= 0.$$
Since
$$\sum_{i=1}^\ell a_i\left(\eta_i,\alpha_i\right) = \left(\sum_{i=1}^\ell a_i\eta_i,  \sum_{i=1}^\ell a_i\alpha_i \right) = (0,0),$$
and $(\eta_1,\alpha_1), \cdots, (\eta_\ell,\alpha_\ell)$ are linearly indepentdent, we get $a_i = 0$ for all $i\in \{1,\cdots,\ell\}$. This is a contradiction.

Similarly, one gets a contradiction if either $J=\emptyset$ or $I = \emptyset$. The proof is complete.
 \end{proof}
 
 \section{Sensitivity relations} \label{SectS}
 In this section, we use the results obtained in Section \ref{SectV} to derive some sensitivity relations. To do that, besides assumption (F), we need to assume some assumptions on the maximized Hamiltonian $H:\R^n \times \R^n \to \R$ associated to $F$ which is defined as follows
 \begin{equation}
 \label{DefMxH}
 H(x,p) = \max_{v\in F(x)} \langle v,p\rangle,\quad\quad \forall x,p\in \R^n.
 \end{equation}

 \textbf{Assumption (H)}. For every $r>0$
\begin{itemize}
\item[(H1)] there exists $c\ge 0$ so that for every $p\in \mathbb{S}^{n-1}$, the mapping $x\mapsto H(x,p)$ is semiconvex with semiconvexity constant $c$;
\item[(H2)] $\nabla_pH(x,p)$ exists and is Lipschitz in $x$ on $B(0,r)$, uniformly for $p\in \R^n\setminus \{0\}$.
\end{itemize}
Assumptions (H) was introduced for the minimum time propblem in \cite{camapw}. The following are some consequences of assumptions (F) and (H).
\begin{Proposition} (see, e.g., \cite{PCPW, CFS15})
Assume (F) and (H). For  $0\ne p\in \R^n$ and $x\in\R^n$, one has
\begin{equation}
\label{Eq23}
\partial H(x,p) = \partial_x H(x,p) \times \partial_p H(x,p)
\end{equation}
and
\begin{equation}
\label{Eq24}
\nabla_pH(x,p) \in F(x),\qquad \langle \nabla_pH(x,p),p\rangle = H(x,p).
\end{equation}
\end{Proposition}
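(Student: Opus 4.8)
The plan is to treat the three assertions separately, since they rest on different ingredients. The common starting point is the observation that, for fixed $x$, the map $p\mapsto H(x,p)=\max_{v\in F(x)}\langle v,p\rangle$ is precisely the support function of the nonempty convex compact set $F(x)$ (by (F1)); in particular it is sublinear, hence convex and positively homogeneous of degree one in $p$. Moreover $H$ is locally Lipschitz jointly in $(x,p)$: Lipschitzness in $p$ is clear from the bound $\max\{|v|:v\in F(x)\}\le\gamma(1+|x|)$ in (F3), while Lipschitzness in $x$ follows from (F2), since $|H(x,p)-H(x',p)|$ is controlled by the Hausdorff distance between $F(x)$ and $F(x')$, hence by $L|x-x'||p|$. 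This joint Lipschitz regularity is what allows the Clarke calculus to be applied.

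For the inclusion \eqref{Eq23} I would simply invoke the general rule of nonsmooth calculus: if a function of two groups of variables is Lipschitz near a point, then its Clarke generalized gradient is contained in the product of its two partial generalized gradients (see \cite{CLSW}). Applied to the locally Lipschitz function $H$ at $(x,p)$ this gives $\partial H(x,p)\subset\partial_xH(x,p)\times\partial_pH(x,p)$. This is the step that genuinely uses external machinery; everything else reduces to consequences of convexity and the standing assumptions.

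For \eqref{Eq24} I would argue through support functions. By assumption (H2) the gradient $\nabla_pH(x,p)$ exists at every $p\ne0$, so the support function $p\mapsto H(x,p)$ is differentiable there; by the standard duality for support functions of convex compact sets, differentiability is equivalent to uniqueness of the maximizer in $H(x,p)=\max_{v\in F(x)}\langle v,p\rangle$, and $\nabla_pH(x,p)$ then equals that unique maximizer. Being a maximizer, it belongs to $F(x)$, which proves the first relation in \eqref{Eq24}; the second, $\langle\nabla_pH(x,p),p\rangle=H(x,p)$, is then immediate because the maximizer realizes the maximum (equivalently, it is Euler's identity for the positively homogeneous function $H(x,\cdot)$ of degree one).

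Finally, the Lipschitz continuity of $x\mapsto\nabla_pH(x,p)$ is essentially the content of assumption (H2) itself: on each ball $B(0,r)$ it provides a Lipschitz constant for $\nabla_pH(\cdot,p)$ that is uniform in $p$, and since $\nabla_pH(x,\cdot)$ is positively homogeneous of degree zero in $p$ the estimate does not deteriorate under rescaling of $p$, so covering $\R^n$ by such balls yields the claimed (local) Lipschitz continuity. The one delicate point in the whole argument is the support-function duality used for \eqref{Eq24} — that differentiability of the support function is equivalent to uniqueness of the exposed point and that the gradient is exactly that point — together with the black-boxed partial-gradient inclusion behind \eqref{Eq23}; both are classical, and the remainder is bookkeeping with (F) and (H).
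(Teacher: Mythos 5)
Your handling of \eqref{Eq24} and of the final Lipschitz assertion is sound: the support-function duality (differentiability of the support function of the convex compact set $F(x)$ at $p\neq 0$ forces the exposed face to be a singleton, which is then the gradient, lies in $F(x)$, and attains the maximum) is the standard argument, and the last claim is indeed essentially a restatement of (H2). Note for context that the paper gives no proof of this proposition at all — it cites \cite{PCPW} — so the comparison is with the argument in that reference.

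Your proof of \eqref{Eq23}, however, has a genuine gap: the ``general rule'' you invoke — that for \emph{any} function Lipschitz near a point the Clarke generalized gradient is contained in the product of the two partial generalized gradients — is false, and no such theorem appears in \cite{CLSW} or \cite{FCL}. A standard counterexample is $f(x,y)=\min(|x|,|y|)$ at the origin: $f(\cdot,0)\equiv 0$ and $f(0,\cdot)\equiv 0$, so $\partial_x f(0,0)\times\partial_y f(0,0)=\{(0,0)\}$, yet $(1,0)\in\partial f(0,0)$, being the limit of the gradients $\nabla f(t,2t)=(1,0)$ as $t\downarrow 0$. What is true is that the product inclusion holds when $f$ is Clarke \emph{regular} at the point in question, and this is precisely where the hypotheses (H1)--(H2) — which your argument for \eqref{Eq23} never uses — must enter; a proof of \eqref{Eq23} that uses only Lipschitz continuity cannot be correct. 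The way this is handled in \cite{PCPW} is to show that $H$ is locally semiconvex \emph{jointly} in $(x,p)$ on $\R^n\times(\R^n\setminus\{0\})$: writing $H(x\pm\xi,p\pm\pi)\ge H(x\pm\xi,p)\pm\langle \nabla_pH(x\pm\xi,p),\pi\rangle$ by convexity and (H2), one bounds $H(x+\xi,p+\pi)+H(x-\xi,p-\pi)-2H(x,p)$ from below by the semiconvexity estimate in $x$ from (H1) (with constant scaling like $c|p|$ by homogeneity) plus the term $\langle \nabla_pH(x+\xi,p)-\nabla_pH(x-\xi,p),\pi\rangle$, which is $O(|\xi||\pi|)$ by the uniform-in-$p$ Lipschitz dependence of $\nabla_pH$ on $x$; this gives a joint lower quadratic bound, hence joint semiconvexity, hence regularity, and only then does Clarke's product inclusion for regular functions yield \eqref{Eq23}. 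So the fix is to replace the black-boxed calculus rule by an argument establishing regularity (e.g.\ joint semiconvexity) of $H$; the rest of your proposal can stand as is.
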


 \begin{Lemma}\cite{CaKh} \label{CK} Let $G: [0,T] \times \R^n \rightrightarrows \R^n$ be an upper semicontinuous multifunction. Assume $G(t,\cdot)$ satisfies assumption (F) uniformly in $t\in [0,T]$ and is such that for some $K_0>0$, 
 $$|v| \le K_0|p|,\qquad \forall v\in G(t,p),\quad \forall (t,p) \in [0,T] \times \R^n.$$
 Let $p(\cdot)$ be a solution of the differential inclusion
 \begin{equation}\label{EqLem}
\left\{
  \begin{array}{lcl}
    \dot{p}(t) & \in &G(t,p(t)), \qquad a.e.\,\,t\in [0,T] \\
    p(0)& = & p_0.
  \end{array}
  \right.
\end{equation}
Then,
$$e^{-K_0t}|p(0)| \le |p(t)| \le e^{K_0t}|p(0)|, \qquad \forall t\in [0,T].$$
Moreover, for all $0 \le t_1\le t_2\le T$,
$$e^{-K_0(t_2-t_1)}|p(t_2)| \le |p(t_1)| \le e^{K_0(t_2-t_1)}|p(t_2)|$$
and
$$|p(t_2)-p(t_1)| \le K_0e^{K_0(t_2-t_1)}(t_2-t_1)|p(t_2)|.$$
 \end{Lemma}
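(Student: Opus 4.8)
The plan is to reduce everything to a one-dimensional Gronwall estimate for the scalar function $\psi(t) := |p(t)|^2$. Since $p(\cdot)$ is absolutely continuous and bounded on the compact interval $[0,T]$, the function $\psi$ is absolutely continuous with $\psi'(t) = 2\langle p(t),\dot p(t)\rangle$ for a.e.\ $t$. Working with $|p|^2$ rather than $|p|$ sidesteps the only technical nuisance in the argument, namely the possible non-differentiability of $t\mapsto |p(t)|$ at the zeros of $p$; I expect this to be the one point worth a remark, though it is not a genuine obstacle.

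First I would combine the inclusion $\dot p(t)\in G(t,p(t))$ with the growth bound to get $|\dot p(t)|\le K_0|p(t)|$ for a.e.\ $t$, and then, by the Cauchy--Schwarz inequality,
\[
|\psi'(t)| = 2\,|\langle p(t),\dot p(t)\rangle| \le 2\,|p(t)|\,|\dot p(t)| \le 2K_0|p(t)|^2 = 2K_0\psi(t),
\]
so that $-2K_0\psi(t)\le \psi'(t)\le 2K_0\psi(t)$ a.e. Applying the differential form of Gronwall's inequality in both directions --- that is, observing that $t\mapsto e^{-2K_0t}\psi(t)$ is nonincreasing while $t\mapsto e^{2K_0t}\psi(t)$ is nondecreasing --- yields $e^{-2K_0t}\psi(0)\le \psi(t)\le e^{2K_0t}\psi(0)$. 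Taking square roots gives the first pair of inequalities.

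For the two-sided estimate on a general subinterval, I would rerun the same argument on $[t_1,t_2]$ with $t_1$ playing the role of the initial time, obtaining $e^{-K_0(t_2-t_1)}|p(t_1)|\le |p(t_2)|\le e^{K_0(t_2-t_1)}|p(t_1)|$, and then simply rearrange these two inequalities to isolate $|p(t_1)|$. Finally, for the modulus-of-continuity bound I would write $p(t_2)-p(t_1)=\int_{t_1}^{t_2}\dot p(s)\,ds$, estimate $|p(t_2)-p(t_1)|\le \int_{t_1}^{t_2}K_0|p(s)|\,ds$, and bound $|p(s)|\le e^{K_0(t_2-s)}|p(t_2)|\le e^{K_0(t_2-t_1)}|p(t_2)|$ for $s\in[t_1,t_2]$ using the estimate just proved on $[s,t_2]$; integrating the resulting constant bound over $[t_1,t_2]$ produces the factor $(t_2-t_1)$ and the stated inequality. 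The whole argument is elementary once the scalar differential inequality $|\psi'|\le 2K_0\psi$ is in hand.
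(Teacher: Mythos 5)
Your proposal is correct, and every step checks out: the growth bound on $G$ gives $|\dot p(t)|\le K_0|p(t)|$ a.e., the function $\psi=|p|^2$ is absolutely continuous with $\psi'=2\langle p,\dot p\rangle$ a.e., the two-sided differential inequality $|\psi'|\le 2K_0\psi$ yields the exponential bounds by monotonicity of $e^{\mp 2K_0t}\psi(t)$, the subinterval estimates follow by restarting at $t_1$ and rearranging, and the final modulus-of-continuity bound follows by integrating $|\dot p(s)|\le K_0|p(s)|\le K_0e^{K_0(t_2-s)}|p(t_2)|$ over $[t_1,t_2]$. One point of comparison is worth noting: the paper itself gives no proof of this lemma --- it is quoted from \cite{CaKh} --- so there is no internal argument to measure yours against. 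The proof in that reference is the standard Gronwall argument applied to $t\mapsto|p(t)|$ in integral form (forward for the upper bound, reversed in time for the lower bound); your variant via $\psi=|p|^2$ is essentially the same mechanism, with the small advantage that it avoids any discussion of differentiability of $|p(\cdot)|$ at zeros of $p$, and the a.e.\ chain rule for the smooth function $x\mapsto|x|^2$ composed with an absolutely continuous arc is unproblematic. Your proof is therefore a complete, self-contained substitute for the citation.
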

We recall Maximum Principle in the following form
\begin{Theorem} \label{THCMP}
Assume that (F) and (H). Let $x_0\in \mathcal{R}\setminus \mathcal{K}$. Suppose $x(\cdot)$ is an optimal trajectory starting at $x_0$. Then there exists an absolutely continuous arc $p:[0,\mathcal{T}(x_0)] \to \R^n$, never vanishing, such that
\begin{equation}\label{MP}
\left\{
  \begin{array}{lcl}
    \dot{x}(s) & = &\nabla_p H(x(s),p(s)), \\
    -\dot{p}(s)& \in & \partial_xH(x(s),p(s),
  \end{array}
  \right.\,\,\,\,\,\text{a.e.}\,\,s\in [0,\mathcal{T}(x_0)],
\end{equation}
and the transverality condition $$p(\mathcal{T}(x_0)) \in N^C_{\mathcal{K}}(x(\mathcal{T}(x_0)).$$
\end{Theorem}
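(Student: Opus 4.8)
The plan is to derive the stated Maximum Principle as a specialization to the minimum time problem of the Hamiltonian form of the nonsmooth maximum principle for differential inclusions, and then to use assumption (H) to pass from the coupled generalized Hamiltonian inclusion to the decoupled system (\ref{MP}).

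First I would recast the problem on the fixed interval $[0,\tau]$, with $\tau := \mathcal{T}(x_0)$, as a Mayer problem with free terminal time: among all trajectories of $F$ issuing from $x_0$, the arc $x(\cdot)$ minimizes the time of arrival on $\mathcal{K}$. By the dynamic programming principle one has $\mathcal{T}(x(t)) = \tau - t$ for every $t\in[0,\tau]$, so that $x(t)$ lies on the boundary of $\mathcal{R}(\tau-t)$; this is the geometric content that produces a nontrivial multiplier. Applying the Hamiltonian (Clarke--Vinter) maximum principle to the optimal pair yields an absolutely continuous arc $p(\cdot)$, not identically zero, satisfying the adjoint inclusion $(-\dot p(s),\dot x(s)) \in \partial H(x(s),p(s))$ for a.e. $s$, together with the terminal transversality $p(\tau)\in N^C_{\mathcal{K}}(x(\tau))$. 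The optimality of the arrival time on the target is exactly what rules out the abnormal case and guarantees $p(\tau)\ne 0$.

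Next I would decouple the inclusion using the consequences of (F) and (H) recalled above. By (\ref{Eq23}) we have $\partial H(x,p)\subset \partial_x H(x,p)\times\partial_p H(x,p)$, and by (\ref{Eq24}) the partial gradient $\nabla_p H(x,p)$ exists with $\nabla_p H(x,p)\in F(x)$ and $\langle\nabla_p H(x,p),p\rangle = H(x,p)$. Hence the single inclusion splits into $-\dot p(s)\in\partial_x H(x(s),p(s))$ and $\dot x(s)=\nabla_p H(x(s),p(s))$ a.e. on $[0,\tau]$, which is precisely (\ref{MP}).

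Finally, to see that $p$ never vanishes it suffices to observe that $p$ solves the adjoint inclusion $-\dot p(s)\in\partial_x H(x(s),p(s))$, whose right-hand side $G(s,p):=-\partial_x H(x(s),p)$ is upper semicontinuous, positively homogeneous of degree one in $p$, and bounded by $K_0|p|$ on the relevant compact set (with $K_0$ the Lipschitz constant of $F$), since $H$ is positively homogeneous of degree one in $p$. Lemma \ref{CK} then yields $e^{-K_0(\tau-s)}|p(\tau)|\le|p(s)|$, and since $p(\tau)\ne 0$ we conclude $p(s)\ne 0$ for all $s\in[0,\tau]$. The main obstacle is the first step: establishing the nonsmooth Hamiltonian inclusion together with a nonzero multiplier in the absence of any controllability (Petrov-type) assumption. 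This is the substantive content of the nonsmooth maximum principle, which is invoked here as a known result, so that the statement is \emph{recalled} rather than proved from scratch.
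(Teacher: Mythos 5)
Your overall route is the same as the paper's: Theorem \ref{THCMP} is \emph{recalled} rather than proved from scratch --- the paper's entire proof consists of citing Theorem 3.5.4 of \cite{FCL} together with the splitting inclusion (\ref{Eq23}), which is exactly the skeleton of your argument (your use of (\ref{Eq24}) to identify $\dot x(s)=\nabla_pH(x(s),p(s))$, and your Gronwall-type argument for non-vanishing, also match the remark the paper makes after (\ref{HI})).

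One assertion in your proposal is genuinely false, however, and should be removed: that ``the optimality of the arrival time on the target is exactly what rules out the abnormal case and guarantees $p(\tau)\ne 0$.'' In the absence of a Petrov-type controllability condition --- which is precisely the setting of this paper --- optimality does \emph{not} rule out abnormal extremals: these are the extremals along which $H(x(t),p(t))=0$, equivalently $h(x(t),-p(t))=0$, and far from being excluded they correspond to the horizontal subgradients studied in Theorems \ref{THM1}, \ref{THM3} and Corollary \ref{Co3}. Moreover, normality (positivity of the Hamiltonian along the extremal) and non-vanishing of $p$ are distinct issues, and only the latter is claimed in the statement. The correct chain is the one your own last paragraph already contains: the maximum principle gives $p\not\equiv 0$; since $H(x,\cdot)$ is the support function of $F(x)$, it is positively homogeneous in $p$ and $|\dot p(s)|\le K|p(s)|$ a.e.; the two-sided estimate of Lemma \ref{CK} then shows that $p$ vanishes at one instant if and only if it vanishes identically, hence $p$ never vanishes and in particular $p(\tau)\ne 0$. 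So the repair is local --- anchor the Gronwall bound at any point where $p\ne 0$, rather than at $\tau$ via the unjustified normality claim --- but as written, your justification of $p(\tau)\ne 0$ rests on a statement that fails in exactly the regime this paper is designed to handle.
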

\begin{proof}
From Theorem 3.5.4 in \cite{FCL} and (\ref{Eq23}).
\end{proof}
An absolutely continuous function $p(\cdot)$ satisfying the system (\ref{MP}) and the transversality condition is called a dual arc associated to the trajectory $x(\cdot)$. From (\ref{Eq24}), we have
\begin{equation}
H(x(t),p(t))  = \langle \dot{x}(t),p(t)\rangle,\quad\text{for a.e.}\,\,t\in [0,\mathcal{T}(x_0)].
\end{equation}
We remark that, under our assumptions, if $(x,p)$ solves the Hamiltonian inclusion
\begin{equation}\label{HI}
\left\{
  \begin{array}{lcl}
    \dot{x}(s) & = &\nabla_p H(x(s),p(s)), \\
    -\dot{p}(s)& \in & \partial_xH(x(s),p(s),
  \end{array}
  \right.\,\,\,\,\,\text{a.e.}\,\,s\in [0,T],
\end{equation}
then there are two possible cases:
\begin{itemize}
\item[(a)] either $p(s) \ne 0$ for all $s\in [0,T]$.
\item[(b)] or $p(s) =0$ for all $s\in [0,T]$.
\end{itemize}
Moreover, let $r>0$ be such that $x([0,T]) \subset B(0,r)$ and let $K = K(r)$ be a Lipschitz constant of $F$ on $B(0,r)$, we have
$$|\dot{p}(s)| \le K|p(s)|,\qquad a.e.\, s\in [0,T].$$
(see, e.g., \cite{PCPW} for detailed discussion).

Finally, we recall the following result which is useful in the sequel.
\begin{Lemma} [see, e.g, \cite{CaKh}]
Assume (F) and (H), and let $p(\cdot)$ be an absolutely continuous arc on $[0,T]$ with $p(t)\ne 0$ for all $t\in [0,T]$. Then for each $x\in \R^n$, the problem 
\begin{equation}
\left\{
  \begin{array}{lcl}
    \dot{x}(t) & = &\nabla_p H(x(t),p(t)), \\
    x(0)& = & x,
  \end{array}
  \right.\,\,\,\,\,\text{a.e.}\,\,t\in [0,T],
\end{equation}
has a unique solution.
\end{Lemma}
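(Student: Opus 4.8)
The plan is to recognize this as a Carath\'eodory Cauchy problem with right-hand side $g(t,x):=\nabla_p H(x,p(t))$ and to invoke the classical Picard--Lindel\"of (Cauchy--Lipschitz) theorem. The only genuine inputs to verify are that $g$ is well defined, that it is continuous and Lipschitz in $x$ uniformly in $t$ on bounded sets, and that a linear growth bound rules out finite-time blow-up on $[0,T]$.

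First I would check that $g$ is well defined: since $p(t)\neq0$ for every $t\in[0,T]$, hypothesis (H2) guarantees that $\nabla_p H(x,p(t))$ exists for all $(t,x)$. Next, continuity in $t$: as $H(x,\cdot)$ is the support function of $F(x)$ it is convex in $p$, and a finite convex function that is differentiable throughout the open set $\R^n\setminus\{0\}$ has continuous gradient there; composing with the continuous arc $p(\cdot)$ shows that $t\mapsto\nabla_p H(x,p(t))$ is continuous for each fixed $x$. Lipschitz dependence on $x$ is precisely (H2): on any ball $B(0,r)$ there is $L=L(r)$ with $|\nabla_p H(x,p)-\nabla_p H(y,p)|\le L|x-y|$ for all $x,y\in B(0,r)$ and all $p\neq0$, hence $|g(t,x)-g(t,y)|\le L|x-y|$ uniformly in $t$. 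Combining the uniform-in-$t$ Lipschitz bound in $x$ with continuity in $t$ yields joint continuity of $g$.

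For the a priori bound I would use the inclusion $\nabla_p H(x,p)\in F(x)$ from (\ref{Eq24}) together with the linear growth (F3): any solution satisfies $|\dot{x}(t)|=|g(t,x(t))|\le\gamma(1+|x(t)|)$, so Gronwall's inequality gives $|x(t)|\le(|x|+\gamma T)e^{\gamma T}$ on $[0,T]$. This confines every solution to a fixed ball $B(0,r)$, on which the uniform Lipschitz constant $L=L(r)$ of the previous step is available. Picard--Lindel\"of then produces a local solution which, thanks to the bound just obtained, extends to all of $[0,T]$; uniqueness is a direct Gronwall argument, since two solutions $x_1,x_2$ both stay in $B(0,r)$ and satisfy $|x_1(t)-x_2(t)|\le L\int_0^t|x_1(s)-x_2(s)|\,ds$ with $x_1(0)=x_2(0)=x$, forcing $x_1\equiv x_2$.

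I expect the main (indeed essentially the only) obstacle to be the localization step: the Lipschitz estimate in (H2) is uniform only on bounded balls, so one must first establish the global-in-time a priori bound to pin down the ball $B(0,r)$ before invoking global existence and uniqueness. The role of the never-vanishing hypothesis on $p(\cdot)$ is also worth flagging, since it is exactly what makes $\nabla_p H(x,p(t))$ defined and the Lipschitz constant applicable for all $t$. Everything else is a routine application of the theory of Carath\'eodory differential equations.
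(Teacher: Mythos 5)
Your proof is correct, and it fills in exactly the standard argument that the paper itself omits: the lemma is stated with only a citation to \cite{CaKh}, where the proof is the same Cauchy--Lipschitz/Carath\'eodory scheme you use, namely that $p(t)\ne 0$ plus (H2) makes $g(t,x)=\nabla_p H(x,p(t))$ well defined and Lipschitz in $x$ locally uniformly in $t$, while $\nabla_p H(x,p)\in F(x)$ together with (F3) gives the linear growth and Gronwall bound needed to localize and extend the solution to all of $[0,T]$. Your verification of continuity in $t$ via gradient continuity of the convex function $H(x,\cdot)$ on $\R^n\setminus\{0\}$ and your flagging of the localization step are both sound, so there is nothing to correct.
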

The next theorem is the main result of this section. It can be seen as the propagation of the normals to the epigraph of the minimum time function along optimal trajectories.
\begin{Theorem}
\label{THM6}
Assume (F) and (H) and given $x_0\in \mathcal{R}\setminus \mathcal{K}$. Let $\bar{x}:[0,\mathcal{T}(x_0)] \to \R^n$ be an optimal trajectory for $x_0$ and let $\bar{p}:[0,\mathcal{T}(x_0)]\to \R^n$ be an arc such that $(\bar{x},\bar{p})$ is a solution of the system
\begin{equation}
\left\{
  \begin{array}{lcl}
    \dot{x}(s) & = &\nabla_p H(x(s),p(s)), \\
    -\dot{p}(s)& \in & \partial_xH(x(s),p(s),
  \end{array}
  \right.\,\,\,\,\,\text{a.e.}\,\,s\in [0,\mathcal{T}(x_0)],
\end{equation}
satisfying $\bar{x}(0) = x_0$ and $\left(-\bar{p}(0),h(x_0,-\bar{p}(0))\right) \in N^P_{\mathrm{epi}(\mathcal{T})}(x_0,\mathcal{T}(x_0)).$ Then for all $t\in [0,\mathcal{T}(x_0))$,
$$\left(-\bar{p}(t),h(\bar{x}(t),-\bar{p}(t))\right) \in N^P_{\mathrm{epi}(\mathcal{T})}(\bar{x}(t),\mathcal{T}(\bar{x}(t))).$$
Moreover, $h(\bar{x}(t),-\bar{p}(t)) = h(x_0,-\bar{p}(0))$ for all $t\in [0,\mathcal{T}(x_0)]$.
\end{Theorem}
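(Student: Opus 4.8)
The plan is to reduce the epigraph statement to a statement about proximal normals to the sublevel (reachable) sets, and then to propagate the latter by transporting test points backward along the Hamiltonian state flow. By Theorem~\ref{THM4}, for $x\in\mathcal R\setminus\mathcal K$ and $\zeta\in\R^n$ one has $\zeta\in N^P_{\mathcal R(\mathcal T(x))}(x)$ if and only if $(\zeta,h(x,\zeta))\in N^P_{\mathrm{epi}(\mathcal T)}(x,\mathcal T(x))$. Since the principle of optimality gives $\mathcal T(\bar x(t))=\mathcal T(x_0)-t$, it therefore suffices to prove
$$-\bar p(t)\in N^P_{\mathcal R(\mathcal T(x_0)-t)}(\bar x(t)),\qquad t\in[0,\mathcal T(x_0)).$$
If $\bar p(0)=0$, then by the dichotomy recalled after (\ref{HI}) we have $\bar p\equiv 0$ and the assertion is trivial, since $0$ is always a proximal normal and $h(x,0)=0$; so I assume $\bar p(0)\ne 0$, whence $\bar p(s)\ne 0$ for every $s$ and, by (H2), $\nabla_pH(\cdot,\bar p(s))$ is well defined and Lipschitz in the state with a constant uniform on the relevant ball.

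\emph{Transport of test points.} Fix $t\in(0,\mathcal T(x_0))$ and let $y\in\mathcal R(\mathcal T(x_0)-t)$ lie in a small ball about $\bar x(t)$. Let $z(\cdot)$ solve $\dot z(s)=\nabla_pH(z(s),\bar p(s))$ on $[0,t]$ with $z(t)=y$; this is uniquely solvable (the unique-solvability lemma recalled in Section~\ref{SectS}) and depends continuously on $y$, so $z(0)$ is as close to $x_0$ as we wish. Because $\nabla_pH(x,p)\in F(x)$ by (\ref{Eq24}), the arc $z|_{[0,t]}$ is a trajectory of $F$ joining $z(0)$ to $y$, hence $\mathcal T(z(0))\le t+\mathcal T(y)\le\mathcal T(x_0)$, i.e. $z(0)\in\mathcal R(\mathcal T(x_0))$. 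The proximal normal inequality for $-\bar p(0)\in N^P_{\mathcal R(\mathcal T(x_0))}(x_0)$ then supplies $\sigma_0>0$ with $\langle-\bar p(0),z(0)-x_0\rangle\le\sigma_0|z(0)-x_0|^2$ once $y$ is close enough to $\bar x(t)$.

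\emph{The monotonicity estimate.} The core of the argument is to control $\psi(s):=\langle\bar p(s),z(s)-\bar x(s)\rangle$. Differentiating and inserting $\dot z=\nabla_pH(z,\bar p)$, $\dot{\bar x}=\nabla_pH(\bar x,\bar p)$, $-\dot{\bar p}=\xi\in\partial_xH(\bar x,\bar p)$, together with the identity $\langle\bar p,\nabla_pH(\cdot,\bar p)\rangle=H(\cdot,\bar p)$ from (\ref{Eq24}), gives
$$\psi'(s)=-\langle\xi(s),z(s)-\bar x(s)\rangle+\big(H(z(s),\bar p(s))-H(\bar x(s),\bar p(s))\big).$$
Now assumption (H1) enters: $x\mapsto H(x,\bar p(s))$ is semiconvex with constant $c|\bar p(s)|$, and $|\bar p(s)|$ is bounded on $[0,\mathcal T(x_0)]$ by Lemma~\ref{CK}; the semiconvex subgradient inequality (the analogue of (\ref{Eq21})) applied to $\xi(s)$ yields $H(z,\bar p)-H(\bar x,\bar p)\ge\langle\xi,z-\bar x\rangle-c'|z-\bar x|^2$, so that $\psi'(s)\ge -c'|z(s)-\bar x(s)|^2$. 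Since $z$ and $\bar x$ solve the same state equation with a uniformly Lipschitz right-hand side, Gronwall's lemma gives $|z(s)-\bar x(s)|\le C|y-\bar x(t)|$ on $[0,t]$. Integrating the differential inequality from $0$ to $t$ and feeding in the estimate at $s=0$ produces
$$\langle\bar p(t),y-\bar x(t)\rangle=\psi(t)\ge\psi(0)-c'\!\int_0^t|z-\bar x|^2\,ds\ge-\sigma|y-\bar x(t)|^2$$
for a suitable $\sigma>0$, that is, $-\bar p(t)\in N^P_{\mathcal R(\mathcal T(x_0)-t)}(\bar x(t))$; Theorem~\ref{THM4}(i) upgrades this to the claimed inclusion into $N^P_{\mathrm{epi}(\mathcal T)}$.

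\emph{Constancy of $h$ and main difficulty.} For the last assertion, since $h(\bar x(s),-\bar p(s))=-H(\bar x(s),\bar p(s))$, it reduces to the constancy of the maximized Hamiltonian along the autonomous Hamiltonian flow on $[0,\mathcal T(x_0)]$; I would obtain this in the standard way, differentiating the Lipschitz map $s\mapsto H(\bar x(s),\bar p(s))$ and using the product structure (\ref{Eq23}) of $\partial H$ with $\dot{\bar x}=\nabla_pH$ and $-\dot{\bar p}\in\partial_xH$ to see that its derivative vanishes a.e. I expect the main obstacle to be the monotonicity estimate for $\psi$: making the interplay between the Hamiltonian identity (\ref{Eq24}) and the semiconvexity inequality (H1) precise, and keeping the semiconvexity constant $c'$ and the Gronwall factor $C$ uniform in $s$, which also forces the whole argument into a small ball about $\bar x(t)$ where (H2) furnishes a uniform Lipschitz constant; the constancy of $H$, though classical, is the delicate secondary point, since the subdifferential of $H$ in $x$ may be set-valued.
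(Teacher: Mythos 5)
Your proof of the propagation inclusion itself is correct, and its bookkeeping is genuinely different from the paper's even though the analytic core is identical. The paper transports \emph{epigraph} test points: it keeps a scalar slot $\bar\beta\ge\mathcal{T}(x_h(t))$ throughout and shows directly that the pair $(-\bar p(t),\alpha)$, with the \emph{fixed} number $\alpha=h(x_0,-\bar p(0))$, is a proximal normal to $\mathrm{epi}(\mathcal{T})$ at $(\bar x(t),\mathcal{T}(\bar x(t)))$; its key estimate, (\ref{eq44})--(\ref{eq45}), which integrates $\frac{d}{ds}\langle-\bar p(s),x_h(s)-\bar x(s)\rangle$ and uses (H1) via Proposition~\ref{ProSC} and the identity (\ref{Eq24}), is exactly your differential inequality for $\psi$. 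You instead transport sublevel-set test points, prove $-\bar p(t)\in N^P_{\mathcal{R}(\mathcal{T}(\bar x(t)))}(\bar x(t))$, and shuttle between the two formulations with Theorem~\ref{THM4}; since parts (i) and (ii) of that theorem do give the equivalence $\zeta\in N^P_{\mathcal{R}(\mathcal{T}(x))}(x)$ if and only if $(\zeta,h(x,\zeta))\in N^P_{\mathrm{epi}(\mathcal{T})}(x,\mathcal{T}(x))$ for $x\in\mathcal{R}\setminus\mathcal{K}$, this reduction is legitimate, and your monotonicity estimate (semiconvexity applied to the specific subgradient $-\dot{\bar p}(s)$, the identity $\langle p,\nabla_pH\rangle=H$, Gronwall) is sound. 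What the paper's bookkeeping buys is the ``Moreover'' clause for free: once $(-\bar p(t),\alpha)\in N^P_{\mathrm{epi}(\mathcal{T})}(\bar x(t),\mathcal{T}(\bar x(t)))$ with $\alpha$ fixed, Theorem~\ref{THM4}(ii) forces $h(\bar x(t),-\bar p(t))=\alpha=h(x_0,-\bar p(0))$, and no conservation-of-Hamiltonian argument is ever needed.

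That is precisely where your proposal has a genuine gap. Having discarded the scalar component, you must prove the constancy of $h(\bar x(t),-\bar p(t))=-H(\bar x(t),\bar p(t))$ separately, and the argument you sketch does not close: at an a.e.\ point of differentiability of the Lipschitz map $g(s)=H(\bar x(s),\bar p(s))$, the chain rule together with (\ref{Eq23}) only yields $g'(s)=\langle\zeta_x,\dot{\bar x}(s)\rangle+\langle\nabla_pH(\bar x(s),\bar p(s)),\dot{\bar p}(s)\rangle$ for \emph{some} $\zeta_x\in\partial_xH(\bar x(s),\bar p(s))$; the product structure gives you no right to take $\zeta_x=-\dot{\bar p}(s)$, and when $\partial_xH$ is not a singleton the two terms need not cancel, so the derivative does not visibly vanish. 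The repair uses (H1) rather than (\ref{Eq23}): since $H(\cdot,q)$ is semiconvex, the lower subgradient inequality holds for \emph{every} element of $\partial_xH$, in particular for $-\dot{\bar p}(s)$; splitting $g(s+\tau)-g(s)$ into an $x$-increment and a $p$-increment, and bounding the first from below by $\tau\langle-\dot{\bar p}(s),\dot{\bar x}(s)\rangle+o(\tau)$ (semiconvexity) and the second from below by $\tau\langle\dot{\bar x}(s),\dot{\bar p}(s)\rangle+o(\tau)$ (convexity and (H2) in $p$, legitimate since $\bar p(s)\ne0$), one gets $g(s+\tau)-g(s)\ge o(\tau)$ for both signs of $\tau$, hence $g'(s)=0$ a.e.\ and $g$ is constant. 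Alternatively, and more simply, keep the scalar in your transported test points as the paper does, and the constancy drops out of Theorem~\ref{THM4}(ii).
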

 \begin{proof}
 We first note that if $\bar{p}(t) = 0$ for all $t\in [0, \mathcal{T}(x_0)]$ then the conclusion is trivial. We now suppose $\bar{p}(t)\ne 0$ for all $t\in [0, \mathcal{T}(x_0)]$. Set $\alpha = h(x_0,-\bar{p}(0))$. Since $(-\bar{p}(0),\alpha) \in N^P_{\mathrm{epi}(\mathcal{T})}(x_0,\mathcal{T}(x_0))$, there exist $C>0$ and $\eta >0$ such that
 \begin{equation}
 \label{eq41}
 \langle -\bar{p}(0),y-x_0\rangle + \alpha(\beta - \mathcal{T}(x_0)) \le C\left(|y-x_0|^2 + |\beta -  \mathcal{T}(x_0)|^2\right),
 \end{equation}
 for all $(y,\beta) \in \mathrm{epi}(\mathcal{T})$ with $y\in B(x_0,\eta)$.
 We fix $t\in (0,\mathcal{T}(x_0))$. Note that $\bar{x}(\cdot)$ is the unique solution of the system
 \begin{equation}
 \left\{
  \begin{array}{lcl}
    \dot{x}(s) & = &\nabla_p H(x(s),\bar{p}(s)), \\
    x(t)& = & \bar{x}(t),
  \end{array}
  \right.\,\,\,\,\,\text{for}\,\,s\in [0,t].
 \end{equation}
 For $h\in B(0,\eta)$, let $x_h:[0,t]\to \R^n$ be the solution of the equation
 \begin{equation}
 \left\{
  \begin{array}{lcl}
    \dot{x}(s) & = &\nabla_p H(x(s),\bar{p}(s)), \\
    x(t)& = & \bar{x}(t) + h,
  \end{array}
  \right.\,\,\,\,\,\text{for}\,\,s\in [0,t].
 \end{equation}
 Then by using Gronwall's Lemma, one can show that there exists $\kappa >0$ independent of $t$ such that
 \begin{equation}
 |x_h(s) - \bar{x}(s)| \le \kappa|h|,\,\,\,\,\text{for all}\,\,s\in [0,t].
 \end{equation}
 We can choose $\eta>0$ sufficiently small such that $x_h([0,t]) \cap \mathcal{K} = \emptyset$ for all $h\in B(0,\eta)$. By the principle of optimality, 
 $$\mathcal{T}(x_0) = \mathcal{T}(\bar{x}(t)) + t,$$
 and
 $$\mathcal{T}(x_h(0)) \le \mathcal{T}(x_h(t)) + t.$$
 For $\bar{\beta} \ge \mathcal{T}(x_h(t))$, we have
 $$\mathcal{T}(x_h(0)) \le \bar{\beta} + \mathcal{T}(x_0) - \mathcal{T}(\bar{x}(t)).$$
 In (\ref{eq41}), taking $y: = x_h(0),\beta := \bar{\beta} + \mathcal{T}(x_0) - \mathcal{T}(\bar{x}(t))$, we obtain
 \begin{equation}\label{eq42}
 \langle - \bar{p}(0),x_h(0) - \bar{x}(0)\rangle \le -\alpha (\bar{\beta} - \mathcal{T}(\bar{x}(t))) + C\left(|x_h(0) - x(0)|^2 + |\bar{\beta} - \mathcal{T}(\bar{x}(t))|^2\right).
 \end{equation}
 It follows that
 \begin{eqnarray}\label{eq43}
 \langle -\bar{p}(t),x_h(t) -\bar{x}(t)\rangle &=& \langle -\bar{p}(t),x_h(t) -\bar{x}(t) \rangle + \langle \bar{p}(0),x_h(0) - x(0)\rangle +  \langle - \bar{p}(0),x_h(0) - x(0)\rangle \nonumber\\ 
 &\le& \langle -\bar{p}(t),x_h(t) -\bar{x}(t) \rangle + \langle \bar{p}(0),x_h(0) - x(0)\rangle\nonumber \\
&& -\alpha (\bar{\beta} - \mathcal{T}(\bar{x}(t))) + C\left(|x_h(0) - x(0)|^2 + |\bar{\beta} - \mathcal{T}(\bar{x}(t))|^2\right).
 \end{eqnarray}
 We have
 \begin{eqnarray}\label{eq44}
&& \langle -\bar{p}(t),x_h(t) -\bar{x}(t) \rangle + \langle \bar{p}(0),x_h(0) - x(0)\rangle = \int_0^t \frac{d}{ds} \langle -\bar{p}(s),x_h(s) -\bar{x}(s)\rangle ds \nonumber\\
&=& \int_0^t \left( \langle -\dot{\bar{p}}(s),x_h(s) - \bar{x}(s)\rangle  + \langle-\bar{p}(s), \dot{x}_h(s) - \dot{\bar{x}}(s)\rangle \right) ds \nonumber\\
&=& \int_0^t\left( -\langle \dot{\bar{p}}(s),x_h(s) - \bar{x}(s)\rangle  - H(x_h(s),\bar{p}(s)) + H(\bar{x}(s),\bar{p}(s))\right) ds.
 \end{eqnarray}
 Since $ -\dot{\bar{p}}(s) \in \partial_xH(\bar{x}(s),\bar{p}(s))$ a.e. in $[0,\mathcal{T}(x_0)]$, it follows from assumption (H1) and Proposition \ref{ProSC} that
 \begin{eqnarray}
 \label{eq45}
 \langle -\bar{p}(t),x_h(t) -\bar{x}(t) \rangle + \langle \bar{p}(0),x_h(0) - x(0)\rangle &\le& C_1\int_0^t |\bar{p}(s)||x_h(s) - \bar{x}(s)|^2 ds \nonumber\\
 & \le& C_2 |h|^2 = C_2|x_h(t) - \bar{x}(t)|^2,
 \end{eqnarray}
 where $C1 >0, C_2>0$ are suitable constants independent of $t$.\\
 From (\ref{eq43}) - (\ref{eq45}), we have for all $h\in B(0,\eta) , \bar{\beta} \ge \mathcal{T}(x_h(t))$,
 \begin{equation}
  \langle -\bar{p}(t),x_h(t) -\bar{x}(t)\rangle  + \alpha (\bar{\beta} - \mathcal{T}(\bar{x}(t))) \le K(|x_h(t)-\bar{x}(t)|^2 + |\bar{\beta} - \mathcal{T}(\bar{x}(t))|^2,
 \end{equation}
 where $K>0$ is a suitable constant. This implies that $ (-\bar{p}(t),\alpha) \in N^P_{\mathrm{epi}(\mathcal{T})}(\bar{x}(t),\mathcal{T}(\bar{x}(t))).$ Thanks to Theorem \ref{THM4}, $\alpha = h(\bar{x}(t),-\bar{p}(t))$. Since $t\in (0,\mathcal{T}(x_0))$ is arbitrary, we have
 $$(-\bar{p}(t),h(\bar{x}(t),-\bar{p}(t))) \in N^P_{\mathrm{epi}(\mathcal{T})}(\bar{x}(t),\mathcal{T}(\bar{x}(t)))\,\,\text{and}\,\,h(\bar{x}(t),-\bar{p}(t)) = h(x_0,-\bar{p}(0)),\,\,\,\text{for all}\,\,t\in [0,\mathcal{T}(x_0)).$$
 Moreover, by continuity, we have
 $$h(\bar{x}(t),-\bar{p}(t)) = h(x_0,-\bar{p}(0)),\,\,\,\text{for all}\,\,t\in [0,\mathcal{T}(x_0)].$$
 The proof is complete.
 \end{proof}
 \begin{Theorem}
 \label{THM7}
 Assume (F) and (H) and given $x_0\in \mathcal{R}\setminus \mathcal{K}$. Let $\bar{x}:[0,\mathcal{T}(x_0)] \to \R^n$ be an optimal trajectory for $x_0$ and let $\bar{p}:[0,\mathcal{T}(x_0)]\to \R^n$ be an arc such that $(\bar{x},\bar{p})$ is a solution of the system
\begin{equation}
\left\{
  \begin{array}{lcl}
    \dot{x}(s) & = &\nabla_p H(x(s),p(s)), \\
    -\dot{p}(s)& \in & \partial_xH(x(s),p(s),
  \end{array}
  \right.\,\,\,\,\,\text{a.e.}\,\,s\in [0,\mathcal{T}(x_0)],
\end{equation}
satisfying $\bar{x}(0) = x_0$ and $-\bar{p}(0) \in N^P_{\mathcal{R}(\mathcal{T}(x_0))}(x_0)$. Then, for all $t\in [0,\mathcal{T}(x_0)]$,
$$-\bar{p}(t) \in N^P_{\mathcal{R}(\mathcal{T}(\bar{x}(t)))}(\bar{x}(t)) \quad \text{and}\quad h(\bar{x}(t),-\bar{p}(t)) = h(x_0,-\bar{p}(0)).$$
 \end{Theorem}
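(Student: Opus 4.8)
The plan is to run the statement through the dictionary between sublevel-set normals and epigraph normals supplied by Theorem \ref{THM4}, to transport data along the trajectory by the epigraph propagation already established in Theorem \ref{THM6}, and then to treat the terminal time by a direct estimate. As in the proof of Theorem \ref{THM6}, I first record the dichotomy following \eqref{HI}: either $\bar p\equiv 0$ on $[0,\mathcal{T}(x_0)]$, in which case $-\bar p(t)=0\in N^P_{\mathcal{R}(\mathcal{T}(\bar x(t)))}(\bar x(t))$ for every $t$ and there is nothing to prove, or $\bar p(s)\neq 0$ for all $s$; I therefore assume the latter.

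For the interior times the argument is purely formal. Applying Theorem \ref{THM4}(i) to the hypothesis $-\bar p(0)\in N^P_{\mathcal{R}(\mathcal{T}(x_0))}(x_0)$ gives $\bigl(-\bar p(0),h(x_0,-\bar p(0))\bigr)\in N^P_{\mathrm{epi}(\mathcal{T})}(x_0,\mathcal{T}(x_0))$, which is exactly the initial condition demanded by Theorem \ref{THM6}. That theorem then yields, for every $t\in[0,\mathcal{T}(x_0))$,
$$\bigl(-\bar p(t),h(\bar x(t),-\bar p(t))\bigr)\in N^P_{\mathrm{epi}(\mathcal{T})}\bigl(\bar x(t),\mathcal{T}(\bar x(t))\bigr).$$
Since $\bar x(t)\in\mathcal{R}\setminus\mathcal{K}$ for $t<\mathcal{T}(x_0)$, Theorem \ref{THM4}(ii) converts this into $-\bar p(t)\in N^P_{\mathcal{R}(\mathcal{T}(\bar x(t)))}(\bar x(t))$, which is the desired conclusion at every interior instant.

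The terminal time $t=\mathcal{T}(x_0)$ is the point of genuine difficulty, and I expect it to be the main obstacle. Here the endpoint $x_1:=\bar x(\mathcal{T}(x_0))$ lies on $\mathcal{K}$, so $\mathcal{R}(\mathcal{T}(x_1))=\mathcal{R}(0)=\mathcal{K}$ and the claim reads $-\bar p(\mathcal{T}(x_0))\in N^P_{\mathcal{K}}(x_1)$; moreover the scheme above is unavailable, because Theorem \ref{THM6} is stated only on $[0,\mathcal{T}(x_0))$ and Theorem \ref{THM4}(ii) cannot be invoked at a target point. A naive passage to the limit $t\uparrow\mathcal{T}(x_0)$ also fails, since it only produces a \emph{limiting} normal and, worse, the neighbourhoods produced in the proof of Theorem \ref{THM6} degenerate as $\bar x(t)$ approaches $\mathcal{K}$. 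My plan is instead to repeat the estimate of Theorem \ref{THM6} directly on the whole interval $[0,\mathcal{T}(x_0)]$, but testing only against points of the target, for which the obstruction to reaching the endpoint disappears. Writing $C,\eta_0>0$ for the constants in the proximal inequality attached to $\bigl(-\bar p(0),h(x_0,-\bar p(0))\bigr)\in N^P_{\mathrm{epi}(\mathcal{T})}(x_0,\mathcal{T}(x_0))$, I fix $z\in\mathcal{K}$ close to $x_1$ and let $w(\cdot)$ solve the Hamiltonian flow $\dot w(s)=\nabla_pH(w(s),\bar p(s))$ on $[0,\mathcal{T}(x_0)]$ with terminal datum $w(\mathcal{T}(x_0))=z$. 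Gronwall's inequality on the fixed interval gives $|w(s)-\bar x(s)|\le\kappa|z-x_1|$ with $\kappa$ independent of $z$, so $w(0)\in B(x_0,\eta_0)$ once $z$ is near enough to $x_1$.

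The point is that $w$ is an $F$-trajectory, because $\nabla_pH(w,\bar p)\in F(w)$ by \eqref{Eq24}, and it reaches $z\in\mathcal{K}$ at time $\mathcal{T}(x_0)$; hence $\mathcal{T}(w(0))\le\mathcal{T}(x_0)$, so $\bigl(w(0),\mathcal{T}(x_0)\bigr)\in\mathrm{epi}(\mathcal{T})$. Testing the proximal inequality at $x_0$ with $y=w(0)$ and $\beta=\mathcal{T}(x_0)$ makes the Hamiltonian term vanish and leaves
$$\langle -\bar p(0),w(0)-x_0\rangle\le C|w(0)-x_0|^2\le C\kappa^2|z-x_1|^2.$$
On the other hand, integrating $\tfrac{d}{ds}\langle-\bar p(s),w(s)-\bar x(s)\rangle$ over $[0,\mathcal{T}(x_0)]$ and using $-\dot{\bar p}(s)\in\partial_xH(\bar x(s),\bar p(s))$ together with the semiconvexity assumption (H1) and the identity $\langle\nabla_pH(x,p),p\rangle=H(x,p)$, exactly as in \eqref{eq44}--\eqref{eq45}, gives
$$\langle-\bar p(\mathcal{T}(x_0)),z-x_1\rangle-\langle-\bar p(0),w(0)-x_0\rangle\le \tilde C|z-x_1|^2,$$
with $\tilde C$ independent of $z$. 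Adding the two displays yields $\langle-\bar p(\mathcal{T}(x_0)),z-x_1\rangle\le (C\kappa^2+\tilde C)|z-x_1|^2$ for all $z\in\mathcal{K}$ near $x_1$, which is precisely $-\bar p(\mathcal{T}(x_0))\in N^P_{\mathcal{K}}(x_1)=N^P_{\mathcal{R}(\mathcal{T}(\bar x(\mathcal{T}(x_0))))}(\bar x(\mathcal{T}(x_0)))$, completing the endpoint and the proof.
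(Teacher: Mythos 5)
Your proposal is correct and follows essentially the same route as the paper: the interior times are handled by composing Theorem \ref{THM4}(i), Theorem \ref{THM6}, and Theorem \ref{THM4}(ii), and the terminal time by re-running the Hamiltonian-flow estimate of Theorem \ref{THM6} with terminal data taken in $\mathcal{K}$ near $\bar{x}(\mathcal{T}(x_0))$, exactly as the paper does (your use of the epigraph inequality with $\beta=\mathcal{T}(x_0)$ is the same as the paper's direct use of the sublevel-set proximal inequality, since the two coincide when the $\alpha$-term vanishes). The only cosmetic difference is that you make the dichotomy $\bar{p}\equiv 0$ versus $\bar{p}$ never vanishing explicit, which the paper leaves implicit.
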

 \begin{proof}
Since  $-\bar{p}(0) \in N^P_{\mathcal{R}(\mathcal{T}(x_0))}(x_0)$, by Theorem \ref{THM4}, we have $$\left(-\bar{p}(0),h(x_0,-\bar{p}(0))\right) \in N^P_{\mathrm{epi}(\mathcal{T})}(x_0,\mathcal{T}(x_0)).$$
 By Theorem \ref{THM6}, $\left(-\bar{p}(t),h(\bar{x}(t),-\bar{p}(t))\right) \in N^P_{\mathrm{epi}(\mathcal{T})}(\bar{x}(t),\mathcal{T}(\bar{x}(t)))$, for all $t\in [0,\mathcal{T}(x_0))$. Hence again by Theorem \ref{THM4}, $-\bar{p}(t) \in N^P_{\mathcal{R}(\mathcal{T}(\bar{x}(t)))}(\bar{x}(t))$, for all $t\in [0,\mathcal{T}(x_0))$. Set $T = \mathcal{T}(x_0)$. In order to finish the proof, we only have to show that $-\bar{p}(T) \in N^P_\mathcal{K}(\bar{x}(T))$. The arguments follow the lines of the proof of Theorem \ref{THM6}.
 
 Since $-\bar{p}(0) \in N^P_{\mathcal{R}(T)}(x_0)$, there exist $C_0 >0$ and $\eta_0 >0$ such that 
 \begin{equation}
 \langle -\bar{p}(0),y_0 - x_0\rangle \le C_0|y_0-x_0|^2,
 \end{equation}
 for all $y_0\in \mathcal{R}(T) \cap B(x_0,\eta_0)$.\\
 Now let $y\in \mathcal{K}\cap B(\bar{x}(T),\eta_0)$ and set $h: = y-\bar{x}(T) \in B(0,\eta_0)$. Let $x_h:[0,T] \to \R^n$ be the solution of the system
 \begin{equation}
\left\{
  \begin{array}{lcl}
    \dot{x}(s) & = &\nabla_p H(x(s),\bar{p}(s)), \\
    x(T)& = & \bar{x}(T) + h,
  \end{array}
  \right.\,\,\,\,\,\text{for}\,\,s\in [0,T].
\end{equation}
Recall that $\bar{x}(\cdot)$ is the solution of the system
 \begin{equation}
\left\{
  \begin{array}{lcl}
    \dot{x}(s) & = &\nabla_p H(x(s),\bar{p}(s)), \\
    x(T)& = & \bar{x}(T),
  \end{array}
  \right.\,\,\,\,\,\text{for}\,\,s\in [0,T].
\end{equation}
Then there exists a constant $K_0 >0$ such that
\begin{equation}
\label{eq46}
|x_h(s) - \bar{x}(s)| \le K_0|h|,\,\,\,\forall\,\,s\in [0,T].
\end{equation}
Since $x_h(T) = y\in \mathcal{K}$, $\mathcal{T}(x_h(0)) \le T$. That is $x_h(0) \in \mathcal{R}(T)$. Thanks to (\ref{eq46}), $$x_h(0) \in \mathcal{R}(T) \cap B(x_0,\eta_0).$$
Thus 
\begin{equation}
\langle -\bar{p}(0), x_h(0) - \bar{x}(0)\rangle \le C_0|x_h(0) - \bar{x}(0)|^2.
\end{equation}
Moreover, arguing as in the proof of Theorem \ref{THM6}, we have, for some constant $K_1>0$,
\begin{equation}
\langle -\bar{p}(T),x_h(T) - \bar{x}(T)\rangle + \langle \bar{p}(0), x_h(0) - \bar{x}(0)\rangle \le K_1|x_h(T) - \bar{x}(T)|^2.
\end{equation}
Therefore,
\begin{eqnarray}
\langle -\bar{p}(T),x_h(T) - \bar{x}(T) \rangle &=& \langle -\bar{p}(T),x_h(T) - \bar{x}(T)\rangle + \langle \bar{p}(0), x_h(0) - \bar{x}(0)\rangle + \langle -\bar{p}(0), x_h(0) - \bar{x}(0)\rangle\nonumber\\
&\le& C|x_h(T) - \bar{x}(T)|^2. 
\end{eqnarray}
Since $x_h(T) \in B(\bar{x}(T),\eta_0)\cap \mathcal{K}$, the latter inequality implies that $-\bar{p}(T) \in N^P_\mathcal{K}(\bar{x}(T))$. This ends the proof.
 \end{proof}
 \begin{Corollary}
  \label{Co2}
 Assume (F) and (H) and given $x_0\in \mathcal{R}\setminus \mathcal{K}$. Let $\bar{x}:[0,\mathcal{T}(x_0)] \to \R^n$ be an optimal trajectory for $x_0$ and let $\bar{p}:[0,\mathcal{T}(x_0)]\to \R^n$ be an arc such that $(\bar{x},\bar{p})$ is a solution of the system
\begin{equation}
\left\{
  \begin{array}{lcl}
    \dot{x}(s) & = &\nabla_p H(x(s),p(s)), \\
    -\dot{p}(s)& \in & \partial_xH(x(s),p(s),
  \end{array}
  \right.\,\,\,\,\,\text{a.e.}\,\,s\in [0,\mathcal{T}(x_0)],
\end{equation}
satisfying $\bar{x}(0) = x_0$ and $-\bar{p}(0) \in \partial^P\mathcal{T}(x_0)$. Then, for all $t\in [0,\mathcal{T}(x_0)]$,
$$-\bar{p}(t) \in \partial^P\mathcal{T}(\bar{x}(t)),\quad\text{and}\quad h(\bar{x}(t),-\bar{p}(t)) = -1.$$
 \end{Corollary}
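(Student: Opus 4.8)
The plan is to read the statement off from the epigraph propagation result Theorem \ref{THM6}, supplementing it at the terminal time by the sublevel propagation result Theorem \ref{THM7} together with the target characterization Theorem \ref{WY}(a). The whole argument is a matter of translating between the proximal subdifferential and epigraph normals and then quoting the propagation theorems; no new estimates are needed.

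First I would translate the hypothesis into the language of epigraph normals. By the very definition of the proximal subdifferential, $-\bar{p}(0)\in\partial^P\mathcal{T}(x_0)$ means exactly $(-\bar{p}(0),-1)\in N^P_{\mathrm{epi}(\mathcal{T})}(x_0,\mathcal{T}(x_0))$. Applying Theorem \ref{THM4}(ii) to this normal (with $\alpha=-1$) yields $-\bar{p}(0)\in N^P_{\mathcal{R}(\mathcal{T}(x_0))}(x_0)$ and $h(x_0,-\bar{p}(0))=-1$. In particular $(-\bar{p}(0),h(x_0,-\bar{p}(0)))=(-\bar{p}(0),-1)$, so the hypothesis required to launch Theorem \ref{THM6} is met.

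Next I would invoke Theorem \ref{THM6} directly. It provides, for every $t\in[0,\mathcal{T}(x_0))$, the inclusion $(-\bar{p}(t),h(\bar{x}(t),-\bar{p}(t)))\in N^P_{\mathrm{epi}(\mathcal{T})}(\bar{x}(t),\mathcal{T}(\bar{x}(t)))$, together with the crucial constancy $h(\bar{x}(t),-\bar{p}(t))=h(x_0,-\bar{p}(0))=-1$, which by the continuity clause holds on the whole closed interval $[0,\mathcal{T}(x_0)]$. Substituting the value $-1$ gives $(-\bar{p}(t),-1)\in N^P_{\mathrm{epi}(\mathcal{T})}(\bar{x}(t),\mathcal{T}(\bar{x}(t)))$, which is precisely $-\bar{p}(t)\in\partial^P\mathcal{T}(\bar{x}(t))$ for all $t\in[0,\mathcal{T}(x_0))$. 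This already delivers both conclusions on the half-open interval.

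The one step requiring separate care---and the only genuine obstacle---is the terminal time $t=T:=\mathcal{T}(x_0)$, where $\bar{x}(T)\in\mathcal{K}$ and Theorem \ref{THM6} no longer applies, since the subdifferential at target points obeys the different characterization Theorem \ref{WY}(a), with the equality $h=-1$ relaxed to the inequality $h\ge-1$ and $\mathcal{R}(\mathcal{T}(\cdot))$ replaced by $\mathcal{K}$. Here I would call on Theorem \ref{THM7}, which propagates the sublevel normal all the way to $t=T$ and gives $-\bar{p}(T)\in N^P_{\mathcal{K}}(\bar{x}(T))$. Combining this with the already established value $h(\bar{x}(T),-\bar{p}(T))=-1$ (so in particular $\ge-1$) and the characterization $\partial^P\mathcal{T}(x)=N^P_{\mathcal{K}}(x)\cap\{\zeta:h(x,\zeta)\ge-1\}$ of Theorem \ref{WY}(a) closes the endpoint, yielding $-\bar{p}(T)\in\partial^P\mathcal{T}(\bar{x}(T))$ as well. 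This establishes both assertions for every $t\in[0,\mathcal{T}(x_0)]$.
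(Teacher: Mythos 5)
Your proposal is correct and follows essentially the same route as the paper's proof: rewrite the hypothesis as $(-\bar{p}(0),-1)\in N^P_{\mathrm{epi}(\mathcal{T})}(x_0,\mathcal{T}(x_0))$ with $h(x_0,-\bar{p}(0))=-1$, apply Theorem \ref{THM6} on $[0,\mathcal{T}(x_0))$, and handle the endpoint via Theorem \ref{THM7} combined with Theorem \ref{WY}(a). The only cosmetic difference is that you cite Theorem \ref{THM4}(ii) explicitly to get $h(x_0,-\bar{p}(0))=-1$ and $-\bar{p}(0)\in N^P_{\mathcal{R}(\mathcal{T}(x_0))}(x_0)$, which the paper asserts without explicit reference.
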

 \begin{proof}
 Since $-\bar{p}(0) \in \partial^P\mathcal{T}(x_0)$,  $(-\bar{p}(0),-1)\in N^P_{\mathrm{epi}(\mathcal{T})}(x_0,\mathcal{T}(x_0))$ and $h(x_0,-\bar{p}(0)) = -1$. By Theorem \ref{THM6}, we have $\left(-\bar{p}(t),-1\right) \in N^P_{\mathrm{epi}(\mathcal{T})}(\bar{x}(t),\mathcal{T}(\bar{x}(t)))$ for all $t\in [0,\mathcal{T}(x_0))$. That is, $-\bar{p}(t) \in \partial^P \mathcal{T}(\bar{x}(t))$ for all $t\in [0,\mathcal{T}(x_0))$. Moreover, since $-\bar{p}(0) \in N^P_{\mathcal{R}(\mathcal{T}(x_0))}(x_0)$, by Theorem \ref{THM7}, we have $-\bar{p}(T) \in N^P_{\mathcal{K}}(\bar{x}(T))$. Together with $h(\bar{x}(T),-\bar{p}(T)) = -1$,  by Theorem \ref{WY}, one has $-\bar{p}(T)\in \partial^P\mathcal{T}(\bar{x}(T))$.
 \end{proof}
 Similarly, one has
 \begin{Corollary}
  \label{Co3}
 Assume (F) and (H) and given $x_0\in \mathcal{R}\setminus \mathcal{K}$. Let $\bar{x}:[0,\mathcal{T}(x_0)] \to \R^n$ be an optimal trajectory for $x_0$ and let $\bar{p}:[0,\mathcal{T}(x_0)]\to \R^n$ be an arc such that $(\bar{x},\bar{p})$ is a solution of the system
\begin{equation}
\left\{
  \begin{array}{lcl}
    \dot{x}(s) & = &\nabla_p H(x(s),p(s)), \\
    -\dot{p}(s)& \in & \partial_xH(x(s),p(s),
  \end{array}
  \right.\,\,\,\,\,\text{a.e.}\,\,s\in [0,\mathcal{T}(x_0)],
\end{equation}
satisfying $\bar{x}(0) = x_0$ and $-\bar{p}(0) \in \partial^\infty\mathcal{T}(x_0)$. Then, for all $t\in [0,\mathcal{T}(x_0)]$,
$$-\bar{p}(t) \in \partial^\infty\mathcal{T}(\bar{x}(t)),\quad\text{and}\quad h(\bar{x}(t),-\bar{p}(t)) = 0.$$
 \end{Corollary}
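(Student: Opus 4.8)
The plan is to follow the proof of Corollary \ref{Co2} essentially verbatim, replacing the data $(\partial^P\mathcal{T},\, h=-1)$ by $(\partial^\infty\mathcal{T},\, h=0)$ throughout. First I would unwind the hypothesis $-\bar{p}(0)\in \partial^\infty\mathcal{T}(x_0)$: by the definition of the horizontal subdifferential this means $(-\bar{p}(0),0)\in N^P_{\mathrm{epi}(\mathcal{T})}(x_0,\mathcal{T}(x_0))$. Since $x_0\in \mathcal{R}\setminus\mathcal{K}$, Theorem \ref{THM3} applies and yields both $-\bar{p}(0)\in N^P_{\mathcal{R}(\mathcal{T}(x_0))}(x_0)$ and $h(x_0,-\bar{p}(0))=0$. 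In particular $\left(-\bar{p}(0),h(x_0,-\bar{p}(0))\right)=(-\bar{p}(0),0)\in N^P_{\mathrm{epi}(\mathcal{T})}(x_0,\mathcal{T}(x_0))$, which is precisely the starting hypothesis demanded by Theorem \ref{THM6}.

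Next I would invoke Theorem \ref{THM6} to propagate along the optimal trajectory. It gives, for every $t\in [0,\mathcal{T}(x_0))$, that $\left(-\bar{p}(t),h(\bar{x}(t),-\bar{p}(t))\right)\in N^P_{\mathrm{epi}(\mathcal{T})}(\bar{x}(t),\mathcal{T}(\bar{x}(t)))$, together with the conservation identity $h(\bar{x}(t),-\bar{p}(t))=h(x_0,-\bar{p}(0))=0$ valid on the whole interval $[0,\mathcal{T}(x_0)]$. Substituting $h(\bar{x}(t),-\bar{p}(t))=0$ into the membership above yields $(-\bar{p}(t),0)\in N^P_{\mathrm{epi}(\mathcal{T})}(\bar{x}(t),\mathcal{T}(\bar{x}(t)))$, that is, $-\bar{p}(t)\in \partial^\infty\mathcal{T}(\bar{x}(t))$ for all interior times $t\in [0,\mathcal{T}(x_0))$, with the required value $h(\bar{x}(t),-\bar{p}(t))=0$. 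This already settles the claim away from the endpoint (and the degenerate case $\bar{p}\equiv 0$ is absorbed into Theorem \ref{THM6}, since then $-\bar{p}(t)=0\in\partial^\infty\mathcal{T}(\bar{x}(t))$ and $h(\bar{x}(t),0)=0$ hold trivially).

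The only point requiring separate care is the terminal time $T=\mathcal{T}(x_0)$, where $\bar{x}(T)\in \mathcal{K}$ and the relevant characterization switches from Theorem \ref{THM3} to Theorem \ref{THM1}. I would proceed exactly as in Corollary \ref{Co2}: since $-\bar{p}(0)\in N^P_{\mathcal{R}(\mathcal{T}(x_0))}(x_0)$, Theorem \ref{THM7} gives $-\bar{p}(T)\in N^P_{\mathcal{K}}(\bar{x}(T))$; and by the continuity clause of Theorem \ref{THM6} the conservation identity extends to $t=T$, so $h(\bar{x}(T),-\bar{p}(T))=0\ge 0$. Both conditions in the representation $\partial^\infty\mathcal{T}(\bar{x}(T))=N^P_{\mathcal{K}}(\bar{x}(T))\cap\{\zeta:h(\bar{x}(T),\zeta)\ge 0\}$ of Theorem \ref{THM1} are thereby met, which gives $-\bar{p}(T)\in \partial^\infty\mathcal{T}(\bar{x}(T))$ and completes the proof. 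I do not anticipate a genuine obstacle: the argument is formally identical to Corollary \ref{Co2}, and the single subtlety is the endpoint bookkeeping, where the equality $h=0$ (rather than $h=-1$) lands precisely in the closed half-space $\{h\ge 0\}$ required by the target-point formula of Theorem \ref{THM1}, in place of the hyperplane $\{h=-1\}$ used for the proximal subdifferential via Theorem \ref{WY}.
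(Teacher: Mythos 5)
Your proposal is correct and is precisely the argument the paper intends when it writes ``Similarly, one has'' before Corollary \ref{Co3}: it mirrors the proof of Corollary \ref{Co2}, using Theorem \ref{THM3} to unwind the hypothesis, Theorem \ref{THM6} for propagation and conservation of $h$, and Theorems \ref{THM7} and \ref{THM1} for the endpoint, with the half-space condition $h\ge 0$ replacing the level set $h=-1$. No gaps; the endpoint bookkeeping you single out is indeed the only place where the two corollaries differ.
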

 \section{Regularity of the minimum time function}\label{SectR}
 In this section, we apply results in Section \ref{SectV} and Section \ref{SectS} to study the regularity of the minimum time function.
 
 For $\delta >0$, set $\mathcal{S}(\delta) = \mathcal{R}(\delta)\setminus \mathcal{K}$. For a subset  $\mathcal{O}$ of $\R^n$, let $\mathcal{T}_{|\mathcal{O}}:\mathcal{O} \to \R$ be the restriction of $\mathcal{T}$ on $\mathcal{O}$, i.e., $\mathcal{T}_{|\mathcal{O}}(x) = \mathcal{T}(x)$ for $x\in \mathcal{O}$.
 
\textbf{ Assumption (Q)}.  There exist constants $\delta >0$ and $\varphi_0 \ge 0$ such that $\mathcal{R}(t)$ is $\varphi_0$-convex for all $t\in [0,\delta]$.

In the following proposition, we present a relationship between regularity properties of sublevel sets and of the epigraph of the minimum time function.
\begin{Proposition} \label{ProEpi}
Assume (F), (H) and (Q). If  $\mathcal{T}$ is continuous in $\mathcal{R}$, then there exists a continuous function $\varphi$ such that the epigraph of $\mathcal{T}_{|\mathcal{S}(\delta)}$ is $\varphi$-convex.
\end{Proposition}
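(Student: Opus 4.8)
The plan is to verify the defining inequality of $\varphi$-convexity for $K:=\mathrm{epi}(\mathcal{T}_{|\mathcal{S}(\delta)})$ directly, producing a single \emph{constant} $\varphi$ (which is of course continuous) whose value comes from data uniform on the compact set $\mathcal{R}(\delta)$. Fix a base point $(x,\mathcal{T}(x))$ with $x\in\mathcal{S}(\delta)$, a proximal normal $(\zeta,\alpha)\in N^P_{\mathrm{epi}(\mathcal{T})}(x,\mathcal{T}(x))$, and a competitor $(y,\beta)\in K$, so that $y\in\mathcal{S}(\delta)$ and $\beta\ge\mathcal{T}(y)$. By Theorem \ref{THM4}(ii) the vertical component is pinned down: $\alpha=h(x,\zeta)\le 0$ and $\zeta\in N^P_{\mathcal{R}(\mathcal{T}(x))}(x)$. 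Hence the whole matter reduces to bounding
\[
Q:=\langle\zeta,y-x\rangle+\alpha(\beta-\mathcal{T}(x))
\]
from above by $C\,|(\zeta,\alpha)|\big(|y-x|^2+|\beta-\mathcal{T}(x)|^2\big)$, and since $|\zeta|\le|(\zeta,\alpha)|$ it is enough to get the bound with $|\zeta|$ in place of $|(\zeta,\alpha)|$.

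I would split according to the positions of $\mathcal{T}(y)$ and $\beta$ relative to $\mathcal{T}(x)$. \textbf{Case 1: $\mathcal{T}(y)\le\mathcal{T}(x)$ and $\beta\ge\mathcal{T}(x)$.} Then $y\in\mathcal{R}(\mathcal{T}(x))$, which is $\varphi_0$-convex by $(R(\delta,\varphi_0))$, so $\langle\zeta,y-x\rangle\le\varphi_0|\zeta||y-x|^2$; as $\alpha\le0$ and $\beta-\mathcal{T}(x)\ge0$ the remaining term is $\le0$ and the estimate is immediate. \textbf{Case 2: $\mathcal{T}(y)>\mathcal{T}(x)$} (so automatically $\beta\ge\mathcal{T}(y)>\mathcal{T}(x)$). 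This is the converse part of the proof of Theorem \ref{THM3}, now carrying the Hamiltonian value $\alpha$ rather than $0$: taking an optimal trajectory $y(\cdot)$ for $y$, setting $r=\mathcal{T}(y)-\mathcal{T}(x)$ and $x_1=y(r)$ (so $\mathcal{T}(x_1)=\mathcal{T}(x)$, hence $x_1\in\mathcal{R}(\mathcal{T}(x))$), projecting $\dot y(\cdot)$ onto $F(x)$, using $\langle\zeta,\cdot\rangle\ge h(x,\zeta)=\alpha$ on $F(x)$ with the Lipschitzianity of $F$ and the $\varphi_0$-convexity of $\mathcal{R}(\mathcal{T}(x))$ at $x_1$, I obtain $\langle\zeta,y-x\rangle\le-\alpha r+C|\zeta|\big(|y-x|^2+r^2\big)$. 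The decisive point is the algebraic cancellation $-\alpha r+\alpha(\beta-\mathcal{T}(x))=\alpha(\beta-\mathcal{T}(y))\le0$, which removes the term linear in $r$; since $r\le\beta-\mathcal{T}(x)$, the surviving quadratic terms give the claim.

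\textbf{Case 3: $\mathcal{T}(y)\le\beta<\mathcal{T}(x)$} is the crux, and here I expect to lean on the sensitivity relations of Section \ref{SectS}. The naive bound $\langle\zeta,y-x\rangle\le\varphi_0|\zeta||y-x|^2$ no longer suffices, because $\alpha(\beta-\mathcal{T}(x))=|\alpha|(\mathcal{T}(x)-\beta)$ is \emph{positive and only linear} in $|\beta-\mathcal{T}(x)|$, so it must be absorbed by a genuinely negative linear term extracted from the optimal dynamics. The idea is to run an optimal trajectory $\bar x(\cdot)$ out of $x$ down to the level $\beta$: with $t_1=\mathcal{T}(x)-\beta$ and $x_2=\bar x(t_1)$ one has $\mathcal{T}(x_2)=\beta$, so $x_2$ and $y$ both lie in the $\varphi_0$-convex set $\mathcal{R}(\beta)$ (note it is the \emph{global} $\varphi_0$-convexity of $\mathcal{R}(\beta)$, not mere proximal normality, that is used, since $y$ need not be near $x_2$). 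Let $\bar p(\cdot)$ be a dual arc for $\bar x(\cdot)$ with $-\bar p(0)=\zeta$. Along it, Theorem \ref{THM6} gives $h(\bar x(s),-\bar p(s))=\alpha$, whence $\langle-\bar p(s),\dot{\bar x}(s)\rangle=-H(\bar x(s),\bar p(s))=h(\bar x(s),-\bar p(s))=\alpha$ a.e.; integrating and controlling $\bar p(s)-\bar p(0)$ by Lemma \ref{CK} yields the \emph{sharp} estimate $\langle\zeta,x_2-x\rangle\le\alpha t_1+C|\zeta|t_1^2$. For the transversal part I would invoke Theorem \ref{THM7} to propagate the normal, $-\bar p(t_1)\in N^P_{\mathcal{R}(\beta)}(x_2)$, and combine the $\varphi_0$-convexity of $\mathcal{R}(\beta)$ with $|\bar p(0)-\bar p(t_1)|\le C t_1|\zeta|$ (Lemma \ref{CK}) to get $\langle\zeta,y-x_2\rangle\le C|\zeta|(|y-x|^2+t_1^2)$. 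Adding the two and using the exact cancellation $\alpha t_1+\alpha(\beta-\mathcal{T}(x))=0$ (since $t_1=\mathcal{T}(x)-\beta$) leaves precisely $C|\zeta|(|y-x|^2+|\beta-\mathcal{T}(x)|^2)$.

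Finally I would record that $L$, the velocity bound $M$, the constant $K$ of Lemma \ref{CK}, the semiconvexity constant from (H1), and $\varphi_0$ can all be taken uniform for trajectories staying in the compact set reachable from $\mathcal{R}(\delta)$ within time $\delta$; the resulting $C$ is then a genuine constant, so $\varphi\equiv C$ is the continuous function sought. The one delicate step, and what I expect to be the main obstacle, is the construction in Case 3 of a dual arc $\bar p$ for the optimal trajectory $\bar x$ matching the \emph{prescribed} initial costate $-\bar p(0)=\zeta$ (via the maximum principle of Theorem \ref{THCMP} together with the Hamiltonian flow); once this is in hand, everything else is Gronwall-type bookkeeping resting on Theorems \ref{THM4}, \ref{THM6}, \ref{THM7} and Lemma \ref{CK}.
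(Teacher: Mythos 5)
Your core estimates are sound and run on exactly the same machinery as the paper's proof: Theorem \ref{THM4} to pin down $\alpha=h(x,\zeta)\le 0$ and $\zeta\in N^P_{\mathcal{R}(\mathcal{T}(x))}(x)$, the $\varphi_0$-convexity of sublevel sets from $(R(\delta,\varphi_0))$, projection of optimal velocities onto $F(x)$ for levels above $\mathcal{T}(x)$, and, for levels below $\mathcal{T}(x)$, a dual arc with prescribed initial costate $\bar p(0)=-\zeta$ propagated via Theorems \ref{THM6}, \ref{THM7} and controlled by Lemma \ref{CK}. The genuine difference is in what is estimated: the paper proves inequality (\ref{Eq51}) only against \emph{graph} competitors $(y,\mathcal{T}(y))$ (its case (i) is your Case 2 with $\beta=\mathcal{T}(y)$, its case (ii) is your Case 3 with the trajectory stopped at level $\mathcal{T}(y)$), and then delegates the passage to arbitrary epigraph points and to a continuous $\varphi$ to Step 2 of Theorem 3.7 in \cite{CMW}. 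You instead treat an arbitrary competitor $(y,\beta)$ directly; the decisive twist is stopping the optimal trajectory at level $\beta$ rather than at level $\mathcal{T}(y)$, so that $y$ and $x_2=\bar x(\mathcal{T}(x)-\beta)$ both lie in the $\varphi_0$-convex set $\mathcal{R}(\beta)$ and the vertical term cancels exactly via $\alpha t_1+\alpha(\beta-\mathcal{T}(x))=0$. This is a real improvement in self-containedness, and it is genuinely needed: one cannot pass from the graph estimate to the epigraph estimate by monotonicity, since $|\mathcal{T}(y)-\mathcal{T}(x)|$ may exceed $|\beta-\mathcal{T}(x)|$ precisely in your Case 3. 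Also, your Case 2 cancellation $\alpha(\beta-\mathcal{T}(y))\le 0$ correctly disposes of competitors above the base level without any new analysis.

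Two caveats. First, $\varphi$-convexity quantifies over proximal normals at \emph{every} point of the epigraph, and you (like the paper's explicit estimate) only treat base points $(x,\mathcal{T}(x))$ on the graph. At points $(x,\beta_0)$ with $\beta_0>\mathcal{T}(x)$ the normals are necessarily horizontal, and over interior points of $\mathcal{S}(\delta)$ they vanish, but over boundary points of $\mathcal{S}(\delta)$ (the ``side walls'' of $\mathrm{epi}(\mathcal{T}_{|\mathcal{S}(\delta)})$, where normals to the restricted epigraph can be strictly larger than those to $\mathrm{epi}(\mathcal{T})$, so Theorem \ref{THM4} no longer applies) they need a separate argument; this is exactly the part the paper buries in the citation of \cite{CMW}, so your claim that a \emph{constant} $\varphi$ works is established only for graph base points. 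Second, the step you flag as delicate --- the existence of a dual arc $\bar p$ with $(\bar x,\bar p)$ solving the Hamiltonian system and $\bar p(0)=-\zeta$ for the given optimal trajectory --- is asserted by the paper in its case (ii) with no more justification than you give (it rests on the sensitivity-relation constructions of the cited literature, applicable here since, by Corollary \ref{Co1}, $\zeta$ is either a multiple of a proximal subgradient or a horizontal subgradient); so you are not worse off than the paper there, merely more honest.
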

 \begin{proof}
 We first prove that there exists a constant $C = C(\delta,\varphi_0)$ such that for all $x,y \in \mathcal{S}(\delta)$ and $(\zeta,\alpha) \in N^P_{\mathrm{epi}(\mathcal{T})}(x,\mathcal{T}(x))$, there holds
 \begin{equation}
 \label{Eq51}
 \langle (\zeta,\alpha), (y,\mathcal{T}(y)) - (x,\mathcal{T}(x))\rangle \le C(|\zeta| + |\alpha|)\left( |y-x|^2 + |\mathcal{T}(y)-\mathcal{T}(x)|^2\right).
 \end{equation}
Since $(\zeta,\alpha) \in N^P_{\mathrm{epi}(\mathcal{T})}(x,\mathcal{T}(x))$, by Theorem \ref{THM4} we have that $\zeta \in N^P_{\mathcal{R}(\mathcal{T}(x))}(x)$ and $h(x,\zeta) = \alpha$. Hence if $\zeta = 0$ then $\alpha = 0$. Thus (\ref{Eq51}) holds. Now assume that $\zeta \ne 0$. We have two possible cases:
\begin{itemize}
\item[(i)] $\mathcal{T}(y)\ge \mathcal{T}(x)$,
\item[(ii)] $\mathcal{T}(y) < \mathcal{T}(x)$.
\end{itemize}
 We now deal with the case (i). Let $\bar{y}(\cdot)$ be an optimal trajectory starting from $y$. Set $r: = \mathcal{T}(y) - \mathcal{T}(x)$ and $y_1:=\bar{y}(r)$. Observe that $y_1 \in \mathcal{R}(\mathcal{T}(x))$. By Gronwall's Lemma, there exists a constant $K_1: = K_1(\delta)$ such that $|y - \bar{y}(t)| \le K_1t$ for all $t\in [0,\mathcal{T}(y)]$. We have
 \begin{equation}
 \label{Eq52}
 \langle \zeta, y -x\rangle = \langle \zeta, y_1-x\rangle + \langle \zeta, y - x_1\rangle = : (I) + (II).
 \end{equation}
 Since $y_1 \in \mathcal{R}(\mathcal{T}(x))$ and $\mathcal{R}(\mathcal{T}(x))$ is $\varphi_0$-convex, one has
 \begin{eqnarray}
 \label{Eq53}
 (I) &\le & \varphi_0|\zeta||y_1 - x|^2 \le \varphi_0|\zeta| (|y-x| + |y-y_1|)^2\nonumber\\
 &\le& \varphi_0|\zeta| (|y-x| + K_1r)^2\nonumber\\
 &\le& K_2|\zeta|\left(|y-x|^2 + |\mathcal{T}(y)-\mathcal{T}(x)|^2\right)
 \end{eqnarray}
 for some suitable constant $K_2: = K_2(\delta,\varphi_0)$.
 
 Now let $z(\cdot)$ be the measure function which is the projection of $\dot{\bar{y}}(\cdot)$ on $F(x)$ restricted to $[0,\mathcal{T}(y)]$, i.e.,
 $$z(t) = \mathrm{proj}_{F(x)}\dot{\bar{y}}(t),\,\,\text{for a.e.}\,\,t\in [0,\mathcal{T}(y)].$$
 By the  properties of $F$, there exists a constant $L: = L(\delta)$ such that
 $$|\dot{\bar{y}}(t) - z(t)| \le L|\bar{y}(t) - x| \le L(|\bar{y}(t)-y| + |y-x|) \le LK_1r + L|y-x|, \,\text{a.e.}\, t\in [0,r].$$
 Let us consider (II). We have
 \begin{eqnarray}
 \label{Eq54}
 (II) &=& -\int_0^r\langle \zeta,z(s)\rangle ds + \int_0^r\langle\zeta,z(s) - \dot{\bar{y}}(s)\rangle ds\nonumber\\
 &\le& - h(x,\zeta) r + |\zeta| \int_0^r \left(LK_1 r + L|y-x|)ds\right) \nonumber\\
 &\le& - h(x,\zeta)(\mathcal{T}(y) - \mathcal{T}(x)) + K_3|\zeta| \left (|y-x|^2 + |\mathcal{T}(y) - \mathcal{T}(x)|^2 \right)
 \end{eqnarray}
 for some suitable constant $K_3: = K_3(\delta)$.\\
 From (\ref{Eq52}) - (\ref{Eq54}) we obtain (\ref{Eq51}) for the case (i).
 
 We now consider the case (ii). Let $\bar{x}(\cdot)$ be an optimal trajectory starting from $x$. By Gronwall's Lemma, we may assume that $|\bar{x}(t) - x| \le K_1t$ for all $t\in [0,\mathcal{T}(x)]$. Let $\bar{p}(\cdot)$ be an arc such that $(\bar{x},\bar{p})$ solves the system
 \begin{equation}
\left\{
  \begin{array}{lcl}
    \dot{x}(s) & = &\nabla_p H(x(s),p(s)), \\
    -\dot{p}(s)& \in & \partial_xH(x(s),p(s),
  \end{array}
  \right.
  \end{equation}
 in $[0,\mathcal{T}(x)]$ with $\bar{x}(0) = x$ and $\bar{p}(0) = -\zeta$.
 
 Since $\zeta \in N^P_{\mathcal{R}(\mathcal{T}(x))}(x)$, by Theorem \ref{THM7} we have $-\bar{p}(t) \in N^P_{\mathcal{R}(\mathcal{T}(\bar{x}(t)))}(\bar{x}(t))$ for all $t\in [0,\mathcal{T}(x)]$.
 
 Set $r_1: = \mathcal{T}(x) -\mathcal{T}(y)$ and $x_1 := \bar{x}(r_1)$. We have
\begin{eqnarray}
\label{Eq55}
\langle \zeta,y-x\rangle &=& \langle \bar{p}(r_1) -\bar{p}(0), y - x\rangle + \langle -\bar{p}(r_1), y -x_1\rangle + \langle -\bar{p}(r_1),x_1 - x\rangle\nonumber\\
&=&: (III)+(IV) +(V).
\end{eqnarray}
We first consider (III). We have
\begin{eqnarray}
\label{Eq56}
(III) &=& \int_0^{r_1} \langle \dot{\bar{p}}(s),y-x\rangle ds \le K_4\int_0^{r_1}|\bar{p}(0)||y-x|ds = K_4|\zeta||y-x|r_1 \nonumber \\
&\le& K_4|\zeta| \left( |y-x|^2 + |\mathcal{T}(y)-\mathcal{T}(x)|^2 \right),
\end{eqnarray}
for some suitable constant $K_4 := K_4(\delta)$.

Since $-\bar{p}(r_1) \in N^P_{\mathcal{R}(\mathcal{T}(x_1))}(x_1)$ and $\mathcal{R}(\mathcal{T}(x_1))$ is $\varphi_0$ -convex, there has
\begin{eqnarray}
\label{Eq57}
(IV) &\le& \varphi_0|\bar{p}(r_1)||y-x_1|^2 \le \varphi_0|\bar{p}(r_1)|(|y-x|+|x-x_1|)^2\nonumber\\
&\le& K_5|\zeta| \left( |y-x|^2 + |\mathcal{T}(y) - \mathcal{T}(x)|^2\right),
\end{eqnarray}
for some suitable constant $K_5 := K_5(\delta,\varphi_0)$. 

We now consider (V).  We have
\begin{equation}
\label{Eq58}
(V) = \int_0^{r_1} \langle \bar{p}(s) - \bar{p}(r_1),\dot{\bar{x}}(s)\rangle ds + \int_0^{r_1}\langle -\bar{p}(s),\dot{\bar{x}}(s)\rangle ds.
\end{equation}
By the sublinear property of $F$ and the fact that  $\mathcal{R}(\delta)$ is compact, there is some constant $K_6 := K_6(\delta)$ such that $|\dot{\bar{x}}(s)| \le K_6$ for all $s\in [0,\mathcal{T}(x)]$. Using Lemma \ref{CK}, we have for all $s\in [0,r_1]$, 
$$\langle \bar{p}(s) - \bar{p}(r_1),\dot{\bar{x}}(s)\rangle \le |\bar{p}(s) - \bar{p}(r_1)||\dot{\bar{x}}(s)| \le K_7|r_1-s||\bar{p}(r_1)| \le K_8|\zeta|r_1,$$
for some suitable constants $K_7 := K_7(\delta), K_8 := K_8(\delta)$. Therefore, we have 
\begin{equation}
\label{Eq59}
\int_0^{r_1} \langle \bar{p}(s) - \bar{p}(r_1),\dot{\bar{x}}(s)\rangle ds \le K_8|\zeta|r^2_1 = K_8|\zeta||\mathcal{T}(y) - \mathcal{T}(x)|^2.
\end{equation}
To estimate the second term in the right-hand side of (\ref{Eq58}), we first note that, for all $s\in [0,\mathcal{T}(x)]$,
$$\langle - \bar{p}(s),\dot{\bar{x}}(s)\rangle = -H(\bar{x}(s),\bar{p}(s)) = h(\bar{x}(s),-\bar{p}(s)) = h(\bar{x}(0),-\bar{p}(0)) = h(x,\zeta).$$
Hence
\begin{equation}
\label{Eq510}
\int_0^{r_1} \langle -\bar{p}(s),\dot{\bar{x}}(s)\rangle ds = h(x,\zeta)(\mathcal{T}(x) - \mathcal{T}(y)).
\end{equation}
From (\ref{Eq55}) - (\ref{Eq510}), we obtain (\ref{Eq51}) for the case (ii).

We now progress as Step 2 in the proof of Theorem 3.7 in \cite{CMW} and conclude that there exists a continuous function $\varphi$ such that the epigraph of $\mathcal{T}_{|\mathcal{S}(\delta)}$ is $\varphi$-convex. The proof is complete.
 \end{proof}
 The following are some examples in which assumption (Q) holds true.
 \begin{Example} 
 (a)   Let $F(x) = \{Ax+u:u\in U\}$ for all $x\in \R^n$, where $A\in \mathbb{M}_{n\times n}(\R)$ and $U$ is a nonempty compact convex subset of $\R^n$. Let the target $\mathcal{K}$ be a closed, convex subset of $\R^n$  with $h(x,\zeta) \le 0$ for all $x\in \mathcal{K}$ and $\zeta \in N^P_{\mathcal{K}}(x)$. Then $\mathcal{R}(t)$ is convex for any $t>0$ (see Proposition 3.1 in \cite{CMW}). In this case, assumption (Q) holds for any $\delta >0$ and $\varphi_0 = 0$. 
 
 (b) Let $\mathcal{K}=\{0\}$ and   $F(x) = \{f(x) + g(x)u: u\in [-1,1]^m\}$ for all $x\in \R^2$, where $f:\R^2\to \R^2, g:\R^2 \to \mathbb{M}_{2\times m}(\R)$, $m=1$ or $m=2$, are of class $C^{1,1}$ (with Lipschitz constant $L$) and
 \begin{itemize}
 \item[(i)] $f(0) = 0$,
 \item[(ii)] $\mathrm{rank}[g_i(0),Df(0)g_i(0)] = 2$, for $i=1,m$  where $g=(g_1,g_m)$,
 \item[(iii)] $Dg(0) = 0$.
 \end{itemize}
 Then there exists $\tau >0$ depending only on $L, f(0), g(0)$ such that $\mathcal{R}(t)$ is (strictly) convex for all $0<t<\tau$ (see Theorem 5.1 in \cite{CK13}). Therefore, (Q) holds true for $\delta = \tau$ and $\varphi_0 = 0$.
 \end{Example}
 We are now going to provide conditions to ensure (Q) for differential inclusion which may not admit parameterizations with smooth functions using a result given by Pli\'s \cite{AP}. We first give some discussions on the assumptions were given in \cite{AP}.
\begin{Definition}
For a given real number $a>0$, a subset $S$ of $\R^n$ is called $a$-regular if for all points $x_0,x_1\in S$ and number $\lambda \in (0,1)$, the closed ball 
$$\{x\in \R^n: |x-\lambda x_1 - (1-\lambda)x_0| \le a\lambda(1-\lambda)|x_1-x_0|^2\}$$
is contained in $S$.
\end{Definition} 
Note that if $S$ is an $a$-regular set for some $a>0$, then so is $-S$. Moreover, any $a$-regular set is convex. A singleton is an $a$- regular set for any $a>0$.

Let $S_1,S_2 \subset \R^n$ be compact. The Hausdorff distance between $S_1$ and $S_2$ is defined as
$$\mathrm{dist}_{\mathcal{H}}(S_1,S_2) := \mathrm{max}\left\{\mathrm{dist}^{+}_{\mathcal{H}}(S_1,S_2),\mathrm{dist}^{+}_{\mathcal{H}}(S_2,S_1)   \right\},$$
where $\mathrm{dist}^{+}_{\mathcal{H}}(S,S') := \inf\{\varepsilon: S\subset S' +\varepsilon \mathbb{B}\}$.

The following class of multifunctions was introduced in \cite{AP}.
\begin{Definition}
A multifunction $F:\R^n \rightrightarrows \R^n$ is said to be of class $\mathcal{L}$ if there exists a positive constant $C$ such that for any points $x,y \in \R^n$ and for any number $\lambda \in [0,1]$, we have
\begin{equation}
\label{Eq511}
\mathrm{dist}_{\mathcal{H}}(F(\lambda x + (1-\lambda)y),\lambda F(x) + (1-\lambda)F(y)) \le C\lambda(1-\lambda)|x-y|^2.
\end{equation}
\end{Definition}
Observe that if $F:\R^n \rightrightarrows \R^n$ is of class $\mathcal{L}$ then so is $-F$. Moreover, the condition (\ref{Eq511}) is equivalent to the fact that the function $x\mapsto H(x,p)$ is both semiconcave and semiconvex for all $p\in \R^n$, that is, 
\begin{itemize}
\item[(H3)] $x \mapsto H(x,p)$ is of class $C^{1,1}$ for all $p\in \R^n$.
\end{itemize}
Note that this fact is also mentioned in \cite{PCPW}. \\
Let $A$ be a compact convex set and $p$ be a non-vanishing vector. Denote by $w(p,A)$ such a point of $A$ that
$$\langle w(p,A),p\rangle = \mathrm{max}_{w\in A} \langle w,p\rangle.$$
For given compact, convex sets $A,B$, we define
$$s(A,B) :=  \mathrm{max}_{w\ne 0} |w(p,A) - w(p,B)|.$$
In \cite{AP}, the following assumption was made on the multifunction $F$, for $x,y \in \R^n$
\begin{equation}
\label{SD}
s(F(x),F(y)) \le \kappa |x-y|,
\end{equation}
for some constant $\kappa >0$.\\
The assumption (H2) implies that the argmax set of $v\mapsto \langle v,p\rangle$ over $v\in F(x)$, $x,p\in \R^n, p\ne 0$, is singleton which equals $\nabla_pH(x,p)$. Thus, for $x,p \in \R^n, p \ne 0$,
$$w(p,F(x)) = \nabla_p H(x,p).$$
We have for $x,p \in \R^n$,
$$s(F(x),F(y)) = \mathrm{max}_{p\ne 0} |\nabla_pH(x,p) - \nabla_pH(y,p)|.$$
Then again by (H2), (\ref{SD}) holds locally. Therefore, (\ref{SD}) can be seen as a consequence of (H2).

For our result, we need the following technical lemma.
\begin{Lemma} \label{LemCv}
Let $A,B \subset \R^n$ be such that $A\subset B$ and $\mathrm{bdry}B \subset A$. If $A$ is convex, then so is $B$.
\end{Lemma}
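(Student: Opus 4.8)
The plan is to establish convexity of $B$ by contradiction, the key observation being that the hypotheses force $B$ to be closed. First I would note that, since $\mathrm{bdry}B \subset A \subset B$, the set $B$ contains its own boundary and is therefore closed; this closedness is the only property of $B$ the argument will use.

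Next I fix $x_0, x_1 \in B$ and the affine parametrization $\gamma(t) = (1-t)x_0 + t x_1$, $t \in [0,1]$, of the segment $[x_0,x_1]$. Arguing by contradiction, suppose $[x_0,x_1] \not\subset B$, so that $\gamma(t^*) \notin B$ for some $t^* \in (0,1)$. The idea is to record the last instant the path lies in $B$ before $t^*$ and the first instant it re-enters $B$ after $t^*$: I set $t_1 = \sup\{t \in [0,t^*] : \gamma(t) \in B\}$ and $t_2 = \inf\{t \in [t^*,1] : \gamma(t) \in B\}$, both well defined since $\gamma(0), \gamma(1) \in B$. Because $\gamma(t^*) \notin B$ and $B$ is closed, one checks that $t_1 < t^* < t_2$, that $\gamma(t) \notin B$ for $t \in (t_1, t^*]$, and symmetrically $\gamma(t) \notin B$ for $t \in [t^*, t_2)$.

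The central step is to show that the exit and re-entry points $\gamma(t_1)$ and $\gamma(t_2)$ both lie in $\mathrm{bdry}B$, hence in $A$. Indeed, closedness of $B$ together with continuity of $\gamma$ gives $\gamma(t_1), \gamma(t_2) \in B$; on the other hand, letting $t \to t_1^+$ exhibits $\gamma(t_1)$ as a limit of points of $\R^n \setminus B$, so $\gamma(t_1) \in \bar{B} \cap \overline{\R^n \setminus B} = \mathrm{bdry}B \subset A$, and the same reasoning applied from the right yields $\gamma(t_2) \in \mathrm{bdry}B \subset A$.

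Finally, since $t_1 < t^* < t_2$, the point $\gamma(t^*)$ is a genuine convex combination of $\gamma(t_1)$ and $\gamma(t_2)$, so convexity of $A$ forces $\gamma(t^*) \in A \subset B$, contradicting $\gamma(t^*) \notin B$. Hence $[x_0,x_1] \subset B$ for every $x_0,x_1 \in B$, i.e. $B$ is convex. I do not anticipate a serious obstacle here; the only delicate point is verifying that the exit and re-entry points lie on $\mathrm{bdry}B$ rather than merely in $\bar{B}$, which is precisely where the closedness of $B$ is used.
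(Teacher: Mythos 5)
Your proof is correct and follows essentially the same route as the paper's: argue by contradiction, locate two points of $\mathrm{bdry}B$ on the segment that bracket the point outside $B$, and use convexity of $A \supset \mathrm{bdry}B$ together with $A \subset B$ to force that point back into $B$. The paper merely asserts the existence of the bracketing boundary points $x_1, y_1$, whereas you supply the exit/re-entry argument (via the sup/inf of parameter values and closedness of $B$) that justifies it; this is a welcome filling-in of detail, not a different method.
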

 \begin{proof}
 Assume to the contrary that $B$ is not convex. Then there exist $x,y \in B$ such that $[x,y]\setminus B \ne \emptyset$. There also exist $x_1,y_1 \in [x,y]\cap \mathrm{bdry}B$ such that $[x_1,y_1]\setminus B \ne \emptyset$. Since $\mathrm{bdry}B \subset A$ and $A$ is convex, we have $[x_1,y_1] \subset A$. Hence $[x_1,y_1]\subset B$ due to $A\subset B$. This contradiction implies that $B$ is convex.
 \end{proof}
 \begin{Proposition} \label{ProCv}
 Assume (F), (H) and (H3). Suppose that $\mathcal{K}$ is compact and that, for some $a>0$, $\mathcal{K}$ and $F(x)$ are $a$-regular for all $x\in \R^n$. Then there exists $\tau>0$ such that $\mathcal{R}(t)$ is convex for all $t\in [0,\tau]$. 
 \end{Proposition}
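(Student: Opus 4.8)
The plan is to reduce the convexity of the sublevel sets $\mathcal{R}(t)$ to a regularity statement for the attainable sets of the \emph{time-reversed} inclusion, to which the theorem of Pli\'s \cite{AP} applies, and then to transfer that information back to $\mathcal{R}(t)$ by means of Lemma \ref{LemCv}. The first step is the reduction by time reversal. If $x(\cdot)$ is an optimal trajectory of $F$ with $x(0)=x$ and $x(\mathcal{T}(x))\in\mathcal{K}$, then $s\mapsto x(\mathcal{T}(x)-s)$ is a trajectory of $-F$ issuing from a point of $\mathcal{K}$ and arriving at $x$ at time $\mathcal{T}(x)$, and conversely every trajectory of $-F$ starting in $\mathcal{K}$ reverses to a trajectory of $F$ that reaches $\mathcal{K}$. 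Combined with the principle of optimality this gives the identification
\begin{equation*}
\mathcal{R}(t)=\bigcup_{s\in[0,t]}\mathcal{A}_{-F}(\mathcal{K},s),
\end{equation*}
where $\mathcal{A}_{-F}(\mathcal{K},s)$ is the attainable set of $-F$ from $\mathcal{K}$ at time $s$. Since $F(x)$ is $a$-regular for every $x$, so is $-F(x)$, and $\mathcal{K}$ is $a$-regular by hypothesis, so the backward problem carries exactly the data required by Pli\'s.

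Next I would verify the hypotheses of Pli\'s's theorem for $-F$ and invoke it. As observed just after \eqref{Eq511}, assumption (H3) is equivalent to $-F$ being of class $\mathcal{L}$, while (H2) yields single-valuedness of the argmax map and hence the local Lipschitz estimate \eqref{SD}; both properties pass to $-F$. Because $\mathcal{K}$ is compact, all relevant trajectories remain in a fixed ball $B(0,r)$ on which the constants $c$, $\kappa$ furnished by (H) and \eqref{SD} are uniform. Applying the result of \cite{AP} with the $a$-regular initial set $\mathcal{K}$ then produces $\tau>0$ and $a'>0$ such that $\mathcal{A}_{-F}(\mathcal{K},t)$ is $a'$-regular, in particular convex, for every $t\in[0,\tau]$.

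Finally I would upgrade this slicewise convexity to convexity of the sublevel set $\mathcal{R}(t)$ via Lemma \ref{LemCv}, taking $B=\mathcal{R}(t)$ and, for the convex set $A$, the attainable set $\mathcal{A}_{-F}(\mathcal{K},t)$; the inclusion $A\subset B$ is immediate, and the content of the lemma is then that it suffices to show $\mathrm{bdry}\,\mathcal{R}(t)\subset A$. \emph{This last reconciliation is the step I expect to be the main obstacle.} The family $\{\mathcal{A}_{-F}(\mathcal{K},s)\}_{s\in[0,t]}$ need not be nested in $s$, since the system cannot, in general, loiter, so a priori the boundary of the union $\mathcal{R}(t)$ could contain points attained strictly before time $t$; one must therefore control precisely which arrival times can occur on $\mathrm{bdry}\,\mathcal{R}(t)$. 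The delicate point is that $a$-regularity of $\mathcal{K}$ alone does not force the boundary of $\mathcal{R}(t)$ to coincide with the level set $\{\mathcal{T}=t\}$, so I anticipate that the real work lies in a careful analysis of the boundary of the accessible set, identifying the correct convex attainable slice that carries it, possibly by applying the estimate of \cite{AP} directly to the accessible set $\bigcup_{s\le t}\mathcal{A}_{-F}(\mathcal{K},s)$ rather than to a single slice. Once $\mathrm{bdry}\,\mathcal{R}(t)$ is shown to lie in a convex attainable set contained in $\mathcal{R}(t)$, Lemma \ref{LemCv} yields convexity of $\mathcal{R}(t)$ for all $t\le\tau$, completing the proof.
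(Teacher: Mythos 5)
Your proposal is, structurally, the paper's own proof: reverse time, apply the theorem of Pli\'s \cite{AP} (see also Corollary 3.12 of \cite{PCHF}) to the \emph{exact-time} attainable sets $\mathcal{A}(\mathcal{K},t)$ of $-F$, and transfer convexity to $\mathcal{R}(t)$ via Lemma \ref{LemCv} with $A=\mathcal{A}(\mathcal{K},t)$ and $B=\mathcal{R}(t)$. The one step you flag as ``the main obstacle'' and leave open, namely $\mathrm{bdry}\,\mathcal{R}(t)\subset\mathcal{A}(\mathcal{K},t)$, is precisely the paper's claim (ii), which the paper asserts without proof as being ``easy to see''. So, measured against the paper, your attempt is the paper's argument with a hole at exactly the point where the paper offers only an assertion; in that formal sense it is incomplete.

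Your hesitation, however, is justified, and the situation is worse than a missing routine verification: under the hypotheses as stated, the inclusion (ii) --- and indeed the Proposition itself --- can fail. Take $n=2$, $F(x)=\{(x_2,-x_1)\}$ and $\mathcal{K}=\bar{B}((1,0),1/2)$. All hypotheses hold: $F(x)$ is a singleton, hence $a$-regular for every $a>0$ by the paper's own remark; $\mathcal{K}$ is compact and $a$-regular; and $H(x,p)=\langle (x_2,-x_1),p\rangle$ is linear in $x$, so (F), (H1), (H2), (H3) are all satisfied. The flow of $F$ is the clockwise rotation, so $\mathcal{A}(\mathcal{K},s)=\bar{B}((\cos s,\sin s),1/2)$ and $\mathcal{R}(t)=\bigcup_{s\in[0,t]}\bar{B}((\cos s,\sin s),1/2)$ is a thickened circular arc. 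Every point of $\mathcal{R}(t)$ has norm at least $1/2$ (rotations preserve the norm and $\mathcal{K}\subset\{|x|\ge 1/2\}$), yet $\mathcal{R}(t)$ contains $p_0=(1/2,0)$ and $p_t=\frac{1}{2}(\cos t,\sin t)$, whose midpoint has norm $\frac{1}{2}\cos(t/2)<1/2$; hence $\mathcal{R}(t)$ is not convex for any $t\in(0,\pi)$, even though each slice $\mathcal{A}(\mathcal{K},s)$ is a ball, exactly as Pli\'s predicts. Correspondingly, $p_0\in\mathrm{bdry}\,\mathcal{R}(t)\setminus\mathcal{A}(\mathcal{K},t)$, so claim (ii) fails: this is precisely the non-nestedness of $s\mapsto\mathcal{A}(\mathcal{K},s)$ that you worried about. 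What is missing is a hypothesis forcing this family to be nondecreasing, for instance weak invariance of $\mathcal{K}$ under $F$, i.e.\ $h(x,\zeta)\le 0$ for all $x\in\mathcal{K}$ and $\zeta\in N^P_{\mathcal{K}}(x)$ (the condition imposed in part (a) of the Example in Section \ref{SectR}); that condition gives $\mathcal{K}\subset\mathcal{A}(\mathcal{K},r)$ for all $r>0$, hence $\mathcal{A}(\mathcal{K},s)\subset\mathcal{A}(\mathcal{K},s+r)$ by concatenation, hence $\mathcal{R}(t)=\mathcal{A}(\mathcal{K},t)$, after which Pli\'s applies directly and Lemma \ref{LemCv} is not even needed. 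In short: you did not close the gap, but the gap you identified cannot be closed as the statement stands; it marks a genuine error in the paper's proof rather than an omission of routine details.
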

 \begin{proof}
 For $T>0$, let $\mathcal{A}_1(T)$ be the attainable set from $\mathcal{K}$ at the time $T$ for the reversed differential inclusion
   \begin{equation}\label{RDI}
 \left\{
  \begin{array}{lcl}
    \dot{y}(t) & \in &-F(y(t)),\quad\quad \mathrm{a.e.}\,\, t>0\\
    y(0)& = & x \in \R^n
  \end{array}
  \right.
 \end{equation}
 that is,
 $$ \mathcal{A}_1(T): = \{y(T): y(\cdot)\,\text{solves}\,(\ref{RDI})\,\text{with}\,x\in \mathcal{K}\}.$$
 It is easy to see that
 \begin{itemize}
 \item[(i)] $\mathcal{A}_1(T) \subset \mathcal{R}(T)$,
 \item[(ii)] $\mathrm{bdry}\mathcal{R}(T) \subset \mathcal{A}_1(T)$.
 \end{itemize}
 As shown in \cite{AP} (see also Corollary 3.12 in \cite{PCHF}), there exists a number $\tau >0$ such that $\mathcal{A}_1(t)$ is convex for all $t\in [0,\tau]$. From Lemma \ref{LemCv}, $\mathcal{R}(t)$ is convex for all $t\in [0,\tau]$. This ends the proof.
 \end{proof}
 The main result of this section is stated as follows.
 \begin{Theorem}\label{PCV}
 Assume (F), (H) and (H3). Suppose that $\mathcal{K}$ is compact and for some $a>0$, $\mathcal{K}$ and $F(x)$ are $a$-regular for all $x\in \R^n$.  If $\mathcal{T}$ is continuous in $\mathcal{R}$, then there exist a number $\tau >0$  and a continuous function $\varphi$ such that the epigraph of $\mathcal{T}_{|\mathcal{S}(\tau)}$ is $\varphi$-convex.
 \end{Theorem}
 \begin{proof}
 It follows immediately from Proposition \ref{ProEpi} and \ref{ProCv}.
 \end{proof}
 \begin{Corollary}
 Under the same hypotheses of Theorem \ref{PCV}, the minimum time function $\mathcal{T}$ satisfies all the properties listed in Proposition \ref{ProPRF}.
 \end{Corollary}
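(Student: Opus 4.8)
The plan is to read this Corollary as a direct application of Proposition~\ref{ProPRF} to the minimum time function, viewed as a continuous real-valued function with $\varphi$-convex epigraph — the $\varphi$-convexity being exactly the content of Theorem~\ref{PCV}. First I would invoke Theorem~\ref{PCV} to fix a radius $\tau>0$ and a continuous function $\varphi$ for which $\mathrm{epi}(\mathcal{T}_{|\mathcal{S}(\tau)})$ is $\varphi$-convex in the sense of~(\ref{Eq22}). Since all the conclusions (i)--(v) of Proposition~\ref{ProPRF} are either almost-everywhere statements or purely local properties, it suffices to establish them on a neighborhood of the target inside the reachable set, which is precisely the region controlled by Theorem~\ref{PCV}.

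The one point that requires care is that Proposition~\ref{ProPRF} is stated for a continuous function defined on an \emph{open} set, whereas $\mathcal{S}(\tau)=\mathcal{R}(\tau)\setminus\mathcal{K}$ is not open. To remedy this I would pass to
$$\Omega:=\{x\in\R^n:0<\mathcal{T}(x)<\tau\}\subset\mathcal{S}(\tau).$$
Under the standing hypotheses $\mathcal{T}$ is continuous on $\mathcal{S}(\tau)$, so $\Omega=\mathcal{T}^{-1}\bigl((0,\tau)\bigr)$ is open, and on $\Omega$ the function $\mathcal{T}$ is finite and continuous with $\mathrm{dom}(\mathcal{T}_{|\Omega})=\Omega$.

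It then remains to check that $\mathrm{epi}(\mathcal{T}_{|\Omega})$ inherits $\varphi$-convexity from $\mathrm{epi}(\mathcal{T}_{|\mathcal{S}(\tau)})$. Because $\Omega$ is open, for every $(x,\beta)$ with $x\in\Omega$ the set $\mathrm{epi}(\mathcal{T}_{|\Omega})=\mathrm{epi}(\mathcal{T}_{|\mathcal{S}(\tau)})\cap(\Omega\times\R)$ coincides with $\mathrm{epi}(\mathcal{T}_{|\mathcal{S}(\tau)})$ in a full neighborhood of $(x,\beta)$; since proximal normals are determined by a local condition, one has $N^P_{\mathrm{epi}(\mathcal{T}_{|\Omega})}(x,\beta)=N^P_{\mathrm{epi}(\mathcal{T}_{|\mathcal{S}(\tau)})}(x,\beta)$ at such points. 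As $\mathrm{epi}(\mathcal{T}_{|\Omega})\subset\mathrm{epi}(\mathcal{T}_{|\mathcal{S}(\tau)})$, the defining inequality~(\ref{Eq22}) for $\mathrm{epi}(\mathcal{T}_{|\mathcal{S}(\tau)})$ transfers verbatim, with the same $\varphi$, to $\mathrm{epi}(\mathcal{T}_{|\Omega})$.

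Applying Proposition~\ref{ProPRF} to $f:=\mathcal{T}_{|\Omega}$ then delivers all of (i)--(v) for $\mathcal{T}$ on $\Omega$, which is the assertion. I expect the only genuine obstacle to be precisely this bookkeeping — reconciling the non-open set $\mathcal{S}(\tau)$ with the open-domain hypothesis of Proposition~\ref{ProPRF} and verifying the locality of the proximal normal cones — after which the conclusion follows with no further computation.
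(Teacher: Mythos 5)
Your proposal is correct and follows essentially the same route as the paper, which states this Corollary without proof as an immediate consequence of Theorem~\ref{PCV} (the $\varphi$-convexity of $\mathrm{epi}(\mathcal{T}_{|\mathcal{S}(\tau)})$) combined with Proposition~\ref{ProPRF}. Your additional bookkeeping --- passing to the open sublevel region $\Omega=\{x:0<\mathcal{T}(x)<\tau\}$ (open by the standing continuity assumption on $\mathcal{T}$ together with lower semicontinuity) and noting that proximal normal cones are local, so the $\varphi$-convexity inequality transfers to $\mathrm{epi}(\mathcal{T}_{|\Omega})$ --- simply makes explicit the details the paper leaves implicit in order to meet the open-domain hypothesis of Proposition~\ref{ProPRF}.
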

 \subsection*{Acknowledgments} The author wish to express their sincere thanks to the anonymous referees for their helpful suggestions and comments which improved the original manuscript. He would also like to express his gratitude to Professor Giovanni Colombo for his valuable discussions and suggestions  from the beginning stage of this paper.
 
 The paper was supported by funds allocated to the implementation of the international co-funded
project in the years 2014-2018, 3038/7.PR/2014/2, and by the EU grant PCOFUND-GA-2012-
600415.

\end{document}